\documentclass[11pt,reqno]{amsart}
\usepackage{amssymb}
\usepackage{mathptmx}
\usepackage{genyoungtabtikz}
\usepackage{mathtools}
\usepackage{mathrsfs}
\usepackage[export]{adjustbox}
\usepackage[all]{xy}
\usepackage{stmaryrd}
\usepackage{fancyhdr}
\usepackage{hyperref}
\usepackage{tikz-cd}
\usepackage{cite}
\usepackage{amsmath,amsfonts}
\usepackage{graphicx}
\usepackage{wrapfig}
\usepackage{array}
\usepackage{amsthm}
\pagestyle{plain}
\usepackage[left=1.2in, right=1.2in, top=1in, bottom=1in]{geometry}
\usepackage{etoolbox}
\patchcmd{\section}{\scshape}{\bfseries}{}{}
\makeatletter
\renewcommand{\@secnumfont}{\bfseries}
\makeatother
\xyoption{all}
\thispagestyle{empty}

\usepackage{secdot}

\theoremstyle{plain}
\DeclareMathOperator{\id}{\textrm{id}}

\newcommand{\FF}{\mathbb{F}}    
\newcommand{\NN}{\mathbb{N}} 
\newcommand{\ZZ}{\mathbb{Z}}
\newcommand{\wild}{\mathbb{L}}% I'm going to temporarily use this command to denote the n-loop quiver, if we think of a snappier name for it then it should be easy to change.  
\newcommand{\I}{\operatorname{I}} %I'm going to temporarily use this command to denote the function which counts indecomposables of a given dimension.
\newcommand{\NI}{\operatorname{NI}} %I'm going to temporarily use this command to denote the function which counts nilpotent indecomposables of a given dimension.
 
\newcommand{\Rep}{\operatorname{Rep}}  
\newcommand{\Res}{\operatorname{Res}}
\newcommand{\nil}{\operatorname{nil}}
\newcommand{\Hom}{\operatorname{Hom}} 
\newcommand{\Spec}{\operatorname{Spec}} 

\input xy
\xyoption{all}

\theoremstyle{definition}
\newtheorem{mydef}{\textbf{Definition}}[section]
\newtheorem{myeg}[mydef]{\textbf{Example}}

\newtheorem{question}[mydef]{\textbf{Question}}

\newtheorem{rmk}[mydef]{\textbf{Remark}} 
\newtheorem{conv}[mydef]{\textbf{Convention}}
\newtheorem{construction}[mydef]{\textbf{Construction}}
\newtheorem*{notation}{\textbf{Notation}}

\theoremstyle{plain}

\newtheorem*{nothma}{\textbf{Theorem A}}
\newtheorem*{nothmb}{\textbf{Theorem B}}
\newtheorem*{nothmc}{\textbf{Theorem C}}
\newtheorem*{nothmd}{\textbf{Theorem D}}
\newtheorem*{nothme}{\textbf{Theorem E}}
\newtheorem*{nothmf}{\textbf{Theorem F}}

\newtheorem{mytheorem}[mydef]{\textbf{Theorem}}
\newtheorem{mythm}[mydef]{\textbf{Theorem}}
\newtheorem{lem}[mydef]{\textbf{Lemma}}
\newtheorem{pro}[mydef]{\textbf{Proposition}}

\newtheorem{cor}[mydef]{\textbf{Corollary}}

%\tikzset{
%   every node/.style={font=\sffamily\small},
%  main node/.style={thick,circle,draw,font=\sffamily\Large}
%}

\tikzset{main node/.style={circle,fill=black,draw,minimum size=0.3cm,inner sep=0pt},
}

\begin{document}

	\title{On quiver representations over $\mathbb{F}_1$}
	%\title{The Hopf structure of the algebra of graphs}
	%\title{Primitive elements in the graph Hopf algebra}

	\author{Jaiung Jun}
	\address{State University of New York at New Paltz, NY, USA}
	\curraddr{}
	\email{junj@newpaltz.edu}
	
	\author{Alexander Sistko}
	\address{Manhattan College, NY, USA}
	\curraddr{}
	\email{asistko01@manhattan.edu}

	\makeatletter
	\@namedef{subjclassname@2020}{%
		\textup{2020} Mathematics Subject Classification}
	\makeatother
	
	\subjclass[2020]{Primary 16G20; Secondary 05E10, 16G60, 17B35}
	\keywords{quiver, the field with one element, representation of quivers, representation type, coefficient quiver, Hall algebra, skew shape}
	%\date{\today}
	
	\dedicatory{{\bf{Corresponding Author:}} Alexander Sistko}

	\maketitle

	%\tableofcontents
	
\begin{abstract}
We study the category $\textrm{Rep}(Q,\FF_1)$ of representations of a quiver $Q$ over ``the field with one element'', denoted by $\FF_1$, and the Hall algebra of $\textrm{Rep}(Q,\FF_1)$. Representations of $Q$ over $\FF_1$ often reflect combinatorics of those over $\FF_q$, but show some subtleties - for example, we prove that a connected quiver $Q$ is of finite representation type over $\FF_1$ if and only if $Q$ is a tree. Then, to each representation $\mathbb{V}$ of $Q$ over $\FF_1$ we associate a coefficient quiver $\Gamma_\mathbb{V}$ possessing the same information as $\mathbb{V}$. This allows us to translate representations over $\FF_1$ purely in terms of combinatorics of associated coefficient quivers. We also explore the growth of indecomposable representations of $Q$ over $\FF_1$ - there are also similarities to representations over a field, but with some subtle differences. Finally, we link the Hall algebra of the category of nilpotent representations of an $n$-loop quiver over $\FF_1$ with the Hopf algebra of skew shapes introduced by Szczesny. 
\end{abstract} 

%%%%%%%%%%%%%%%%%%(INTRODUCTION)%%%%%%%%%%%%%%
	
\section{Introduction}

The idea of the mysterious algebraic structure ``the field of characteristic one'' goes back to Tits \cite{tits1956analogues} where he observed an incidence geometry $\Gamma(\mathbb{F}_q)$ associated to a Chevalley group $G(\mathbb{F}_q)$ over a finite field $\mathbb{F}_q$ does not completely degenerate, whereas the algebraic structure of $\mathbb{F}_q$ completely degenerates as $q \to 1$. Tits suggested that the geometric object $\lim_{q \to 1} \Gamma(\mathbb{F}_q)$ should be a geometry which is defined over ``the field of characteristic one'' and ought to contain a (combinatorial) core of a Chevalley group. In fact, one may also observe the following: let $G$ be a Chevalley group of rank $\ell$ and $W_G$ be the Weyl group of $G$. From the Bruhat decomposition, one has the following counting formula:
\[
|G(\FF_q)|=\sum_{w \in W_G} (q-1)^\ell q^{n_w}, \quad \textrm{for some } n_w \geq 0.
\] 
The Weyl group $W_G$ is arguably a combinatorial core of $G$, and by removing zeros from the counting polynomial of $G(\FF_q)$ at $q=1$, one obtains
\[
\lim_{q\to 1} \frac{|G(\mathbb{F}_q)|}{(q-1)^\ell} = |W_G|.
\]
Another interesting example is the Grassmannian $\textrm{Gr}(k,n)$ ($k$-dimensional subspaces in an $n$-dimensional space). The cardinality of the set $\textrm{Gr}(k,n)(\mathbb{F}_q)$ of $\mathbb{F}_q$-rational points of  $\textrm{Gr}(k,n)$ is given by the formula:
\[
|\textrm{Gr}(k,n)(\mathbb{F}_q)|=\left[ {\begin{array}{c} n \\k\\
	\end{array} } \right]_q
\]
where
\[ [n]_q=q^{n-1} + ... + q + 1, \quad [n]_{q}!=\prod_{i=1}^n [i]_q
,\quad
\left[ {\begin{array}{c} n \\k\\
	\end{array} } \right]_q=\frac{[n]_q!}{[k]_q![n-k]_q!}
\]
The limit $q \to 1$ gives us the number ${n \choose k}$, counting $k$ element subsets from an $n$ element set, which is also the Euler characteristic of $\textrm{Gr}(k,n)(\mathbb{C})$. 

Another motivation searching for $\FF_1$ is completely independent from Tits' viewpoint. It arises in the following context (first appearing in Manin's work \cite{manin1995lectures}): translating the geometric proof of the Weil conjectures from function fields to the case of $\mathbb{Q}$ with the hope to shed some light on the Riemann Hypothesis. The approach followed in these past years by several mathematicians is to enlarge the category of commutative rings and to develop a notion of ``generalized Grothendieck's scheme theory'' in order to realize the scheme $\Spec \mathbb{Z}$ as ``a curve over $\mathbb{F}_1$''. See, for instance, \cite{deitmar2008f1}, \cite{con2}, \cite{con1}, \cite{connes2019absolute}, \cite{Deitmar}, \cite{oliver1}, \cite{pena2009mapping}, \cite{soule2004varietes}, \cite{toen2009dessous}.

Quivers arise naturally from problems in invariant theory, matrix problems and the representation theory of associative algebras \cite{RepTheoryArtinAlgebras}, \cite{ElementsofRepTheory1}. For an algebraically closed field $k$, it is widely known that admissible quotients of path algebras yield the Morita classes of finite dimensional $k$-algebras. In this correspondence, the quiver is unique up to isomorphism and acyclic path algebras yield the Morita classes of hereditary algebras. To understand the representation theory of a quiver $Q$, one needs to understand its indecomposable representations. Gabriel's celebrated theorem tells us that $Q$ has finitely-many isomorphism classes of indecomposables if and only if its underlying graph is a Dynkin diagram of types $\mathbb{A}_n$, $\mathbb{D}_n$, $\mathbb{E}_6$, $\mathbb{E}_7$ or $\mathbb{E}_8$ \cite{Gabriel1972}. We say that such quivers are of \emph{finite representation type}. The finite-representation-type path algebras belong to the class of \emph{tame algebras}, whose indecomposables in each dimension vector can be described by finitely-many one-parameter families. Another fundamental result states that any finite-dimensional algebra $A$ which is not tame is \emph{wild}, in the sense that its module category contains the representations of \emph{any} finite-dimensional algebra as a closed subcategory \cite{Drozd1980}, \cite{CrawleyBoevey1988}, \cite{Simson2005}. Moreover, the following conditions on $A$ are equivalent: 
\begin{enumerate} 
\item $A$ is of wild type; 
\item The category $\operatorname{mod-}A$ contains $n$-parameter families of indecomposables, for every $n \geq 1$; 
\item The category $\operatorname{mod-}A$ contains a $2$-parameter family of indecomposables; 
\item There is a fully faithful exact functor $\operatorname{mod-}k\langle x,y\rangle \rightarrow \operatorname{mod-}A$ which preserves indecomposables (where $k\langle x,y\rangle$ is the free algebra in two variables).
\end{enumerate} 
 The so-called Tame-Wild Dichotomy is often cited as a benchmark for whether one can understand the representation theory of a given algebra, with wild algebras being characterized as ``hopeless.''  

Classically, the Hall algebra of a finite abelian group $G$ is an associative algebra naturally defined by the combinatorics of flags of abelian $p$-subgroups of $G$. In his groundbreaking work \cite{ringel1990hall}, Ringel notices that the classical Hall algebra construction can be applied to the category $\textrm{Rep}(Q,\mathbb{F}_q)$, and proves that in the case of a simply-laced Dynkin quiver $Q$, the resulting associative algebra realizes the upper triangular part of the quantum group classified by the same underlying Dynkin diagram $\overline{Q}$. In fact, to an abelian category $\mathcal{A}$ satisfying a certain finiteness condition, one may associate the Hall algebra $H_\mathcal{A}$; one first considers $H_\mathcal{A}$ as a vector space spanned by the set $\textrm{Iso}(\mathcal{A})$ of isomorphism classes of $\mathcal{A}$. For each $M, N \in \textrm{Iso}(\mathcal{A})$, multiplication is defined as follows:
\[
M\cdot N := \sum_{R \in \textrm{Iso}(\mathcal{A})}\textbf{a}^R_{M,N}R,
\]
where 
\[
\textbf{a}^R_{M,N}=\#\{L \subseteq R \mid L\simeq N\textrm{ and } R/L \simeq M\}.
\]
The existence of a Hopf algebra structure for $H_\mathcal{A}$ further depends on properties of $\mathcal{A}$, and is rather subtle. In his celebrated work \cite{kapranov1997eisenstein}, Kapranov investigates the case when $\mathcal{A}$ is the category of coherent sheaves $Coh(X)$ on a smooth projective curve $X$ over a finite field $\mathbb{F}_q$, and proves when $X=\mathbb{P}^1_{\mathbb{F}_q}$, a certain subalgebra of the Hall algebra $H_{Coh(X)}$ is isomorphic to a ``positive part'' of the quantum affine algebra $U_q(\widehat{\mathfrak{sl}_2})$. However, the structure of $H_{Coh(X)}$ is barely known even for $X=\mathbb{P}^2_{\mathbb{F}_q}$. Finally, we note that in their work \cite{dyckerhoff2012higher}, Dyckerhoff and Kapranov introduce the notion of proto-exact categories which allows one to construct the Hall algebra in a more general setting beyond abelian categories. This framework is well suited to define and study the Hall algebra of a category whose objects are combinatorial. For instance, in \cite{eppolito2018proto}, the Hall algebra of the category of matroids is shown to be isomorphic to the dual of a matroid-minor Hopf algebra. 

The category $\textrm{Rep}(Q,\FF_1)$ of $\FF_1$-representations of a quiver $Q$ and its Hall algebra $H_Q$ are first defined and studied by Szczesny in \cite{szczesny2011representations}. Szczesny's main observation is that in some cases the Hall algebra $H_Q$ of $\textrm{Rep}(Q,\FF_1)$ may behave like the specialization at $q=1$ of the Hall algebra of $\textrm{Rep}(Q,\FF_q)$. For instance, Szczesny proves that the Hall algebra of the category of nilpotent representations of the Jordan quiver over $\FF_1$ is isomorphic to the ring of symmetric functions (as Hopf algebras). %over $\mathbb{Z}$; the Hall algebra of the category of nilpotent representations of the Jordan quiver over $\FF_q$. 
Szczesny also proves that there exists a Hopf algebra homomorphism $\rho':\mathbf{U}(\mathfrak{n}_+) \to H_Q$, where $\mathbf{U}(\mathfrak{n}_+)$ is the enveloping algebra of the nilpotent part $\mathfrak{n}_+$ of the Kac-Moody algebra with the same underlying Dynkin diagram $\overline{Q}$. \par\medskip

The aim of this paper is in line with Tits' original idea and Szczesny's work. We investigate $\textrm{Rep}(Q,\FF_1)$, the category of representations of a quiver $Q$ over $\FF_1$, by considering it as a degenerated combinatorial model of the category $\textrm{Rep}(Q,\FF_q)$. We also study the Hall algebra of $\textrm{Rep}(Q,\FF_1)$ which may retain certain combinatorial information of the Hall algebra of $\textrm{Rep}(Q,\FF_q)$. The novelty of our approach is in the widespread adoption of coefficient quivers as in \cite{Ringel1998}, \cite{Kinser2010}, and the function $\NI_Q$ which measures the growth of the number of nilpotent indecomposable $\FF_1$-representations of $Q$. Taking inspiration from the Tame-Wild Dichotomy, the function $\NI_Q$ hints at a way to stratify quivers according to the complexity of their $\FF_1$-representation theory.

In what follows, by a quiver we will always mean a finite quiver. If $Q$ is a fixed quiver, then a coefficient quiver $(\Gamma,c)$ will be a quiver $\Gamma$ along with a quiver map $c : \Gamma \rightarrow Q$. For computational purposes, it will often be expedient to view $(\Gamma, c)$ as a coloring of the vertices (resp. arrows) of $\Gamma$ by the vertices (resp. arrows) of $Q$. With this understanding, we will sometimes refer to a coefficient quiver as a colored quiver.  To a representation $\mathbb{V}$ of a quiver $Q$ over $\FF_1$, we associate a coefficient quiver $(\Gamma_\mathbb{V},c_\mathbb{V})$. In Lemma \ref{l.basicprop} we prove several properties of coefficient quivers arising from $\FF_1$-representations. In particular, we show that these are precisely the windings $c : \Gamma \rightarrow Q$, i.e. the quiver maps satisfying the following property: if $\alpha$ and $\beta$ are distinct arrows of $\Gamma$, then $s(\alpha) = s(\beta)$ or $t(\alpha) = t(\beta)$ implies $c(\alpha) \neq c(\beta)$. We then prove that this condition precisely characterizes coefficient quivers obtained from representations of $Q$ over $\FF_1$. More explicitly, we prove the following.

\begin{nothma}(Proposition \ref{proposition: colored quiver})
Suppose that $c: \Gamma \rightarrow Q$ is a winding. Then there is an $\FF_1$-representation $\mathbb{V}$ of $Q$ and a quiver isomorphism $\phi: \Gamma \rightarrow \Gamma_\mathbb{V}$ such that $c = c_\mathbb{V} \circ \phi$. Furthermore, $\mathbb{V}$ is well-defined up to isomorphism.
\end{nothma}

In Section \ref{section: n-loop representations}, we initiate a more detailed study of the representations of $\wild_n$, the quiver with one vertex and $n$ loops. As a result of this study, we introduce an order relation $\le_{\nil}$ on quivers based on the growth of the number of their indecomposable representations. To be precise, for a quiver $Q$, we define the function $\NI_Q:\mathbb{N} \to \mathbb{N}$ such that 
\[
 \NI_Q(n) = \#\{\text{isomorphism classes of $n$-dimensional indecomposables in $\Rep(Q,\FF_1)_{\nil}$} \}. 
\]
Note that we follow the convention $\mathbb{N} = \{ 0,1,2,3,...\}$. Then, for quivers $Q$, $Q'$, we define:
\[
Q \le_{\nil} Q' \iff \exists D \in \mathbb{R}_+, C \in \mathbb{N} \textrm{ such that } \NI_Q(n) \leq D\NI_{Q'}(Cn), \forall n \gg 0. 
\]
This order relation induces an equivalence relation $\approx_{\nil}$ on quivers as follows: $Q \approx_{\nil} Q'$ if and only if $Q \le_{\nil} Q'$ and $Q' \le_{\nil} Q$. With this, we prove the following.

\begin{nothmb} (Theorem \ref{theorem: upper bound})
Let $Q$ be a quiver. Then $Q \le_{\nil} \wild_2$.
\end{nothmb}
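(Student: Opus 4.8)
The plan is to work entirely on the combinatorial side supplied by Proposition~\ref{proposition: colored quiver}: counting $n$-dimensional nilpotent indecomposables of a quiver is the same as counting, up to isomorphism, \emph{connected} colored quivers on $n$ vertices that are acyclic, carry the prescribed vertex/arrow colors, and whose monochromatic connected subquivers are oriented paths (indecomposability corresponds to connectedness of $\Gamma_\mathbb{V}$). It therefore suffices to produce, for a suitable constant $C$, an injection from the isomorphism classes counted by $\NI_Q(n)$ into those counted by $\NI_{\wild_2}(Cn)$. I will build maps $\Gamma \mapsto \Gamma'$ on colored quivers that (i) send connected to connected, (ii) are injective up to isomorphism because $\Gamma$ is reconstructible from $\Gamma'$, and (iii) multiply the number of vertices by an explicit constant; invoking Proposition~\ref{proposition: colored quiver} to certify that each output is again the colored quiver of a genuine nilpotent representation then yields $\NI_Q(n)\le \NI_{\wild_2}(Cn)$, i.e. $Q\le_{\nil}\wild_2$ with $D=1$. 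Since $\le_{\nil}$ is transitive (the constants $C$ compose multiplicatively), I factor the construction as $Q\le_{\nil}\wild_r\le_{\nil}\wild_2$ for $r=|Q_1|+2$.

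For the step $\wild_r\le_{\nil}\wild_2$, fix the two colors of $\wild_2$ as $\mathrm{red}$ and $\mathrm{blue}$. Given a connected $\wild_r$-colored quiver $\Gamma$ on vertex set $B$, I replace every $b\in B$ by a directed $\mathrm{red}$ path, its \emph{spine},
\[
b^{\mathrm{in}}_r\to\cdots\to b^{\mathrm{in}}_1\to b^{\mathrm{mid}}\to b^{\mathrm{out}}_1\to\cdots\to b^{\mathrm{out}}_r
\]
on $2r+1$ new vertices, and encode each color-$k$ arrow $b\to c$ of $\Gamma$ by a single $\mathrm{blue}$ arrow $b^{\mathrm{out}}_k\to c^{\mathrm{in}}_k$. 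The $\mathrm{red}$ arrows form exactly these disjoint spines; and since each color class of $\Gamma$ is a disjoint union of oriented paths, every out-slot $b^{\mathrm{out}}_k$ has $\mathrm{blue}$ out-degree $\le 1$ and every in-slot $c^{\mathrm{in}}_k$ has $\mathrm{blue}$ in-degree $\le 1$, so both color classes of $\Gamma'$ are again disjoint unions of oriented paths. Acyclicity follows by lifting a topological order $h$ of $\Gamma$ to the rank $h(b)\cdot M+(\text{position along the spine})$ with $M>2r$, under which every spine arrow and every blue arrow strictly increases rank. Connectedness is preserved because two spines are joined by a blue arrow exactly when the corresponding vertices of $\Gamma$ are joined by an arrow. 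Finally $\Gamma$ is reconstructible: the $\mathrm{red}$ connected components recover $B$, the central vertex of each spine is recognized by its position, and a blue arrow leaving a slot at $\mathrm{red}$-distance $k$ past a center records a color-$k$ arrow of $\Gamma$. This gives an injection into $\wild_2$-colored quivers on exactly $(2r+1)n$ vertices, whence $\NI_{\wild_r}(n)\le \NI_{\wild_2}\big((2r+1)n\big)$.

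For the step $Q\le_{\nil}\wild_r$ with $r=|Q_1|+2$, I reserve $|Q_1|$ colors to carry the arrows of $Q$ verbatim and use two further colors $\tau,\sigma$ to erase the vertex coloring. Writing $Q_0=\{1,\dots,p\}$, I attach to each vertex $v$ of color $i$ a fixed-length path $v\xrightarrow{\tau}u_1\to\cdots\to u_p$ together with one pendant arrow $u_i\xrightarrow{\sigma} w_i$ marking position $i$, keeping the $\alpha$-colored arrows between the original vertices. Each vertex is thereby replaced by exactly $p+2$ vertices; the construction preserves acyclicity and connectedness, and the three conditions of Proposition~\ref{proposition: colored quiver} hold over $\wild_r$ (the single vertex color makes color-compatibility automatic). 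One recovers $\Gamma$ by identifying the original vertices as the starting points of the $\tau$-paths, reading off each color $i$ as the $\tau$-distance of the $\sigma$-marked vertex from the start, and then restoring the $\alpha$-arrows among those starting points. Hence $\NI_Q(n)\le \NI_{\wild_r}\big((p+2)n\big)$, and composing with the previous step gives $\NI_Q(n)\le \NI_{\wild_2}\big((2r+1)(p+2)\,n\big)$, proving $Q\le_{\nil}\wild_2$.

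The substance is not the mere existence of a blow-up but controlling it precisely. The gadgets must simultaneously respect the rigid constraint that every color class be a disjoint union of oriented paths—this is exactly what forces me to \emph{serialize} the many arrows incident to a single vertex along a spine with one slot per color, rather than bundling them at one point—remain acyclic after the new arrows are inserted, and be uniform in size so that the bound takes the clean form $\NI_{\wild_2}(Cn)$ instead of a sum $\sum_{m\le Cn}\NI_{\wild_2}(m)$. I expect the main obstacle to be the bookkeeping that makes $\Gamma$ reconstructible from $\Gamma'$—guaranteeing that distinct isomorphism classes never collide—while never exceeding the one-per-color degree bound; verifying injectivity together with the path condition is the delicate point, whereas acyclicity and connectedness fall out of the rank function and the spine-adjacency description.
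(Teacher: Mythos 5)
Your proof is correct, and while it shares the paper's overall strategy --- reduce $Q$ to a loop quiver via an injective, dimension-scaling, indecomposability-preserving gadget construction on colored quivers, then collapse the number of loops to two --- both of your constructions differ from the paper's in substance. For the first step the paper uses $n=|Q_0|+|Q_1|$ colors and attaches to each $v$-colored vertex a single pendant arrow colored $v$ (dimension factor $2$), whereas you use only $r=|Q_1|+2$ colors and encode the vertex color in unary as the position of a $\sigma$-pendant along a $\tau$-path (dimension factor $|Q_0|+2$); both recover $\Gamma_M$ by an explicit deletion, so the trade-off is colors versus vertices and is immaterial for $\le_{\nil}$. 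The more significant divergence is the second step: the paper reduces $\wild_n$ to $\wild_2$ by iterating Construction \ref{construction: GammaFM}, which merges the last two arrow colors via a doubling trick and yields $\NI_{\wild_n}(d)\le \NI_{\wild_2}(2^{n-2}d)$ (Theorem \ref{theorem: n to n-1} and Corollary \ref{corollary: L_m and L_n}), whereas your spine construction handles all $r$ colors in one shot with a \emph{linear} constant $2r+1$ in place of the exponential $2^{r-2}$. Your verification of conditions (1)--(3) of Lemma \ref{l.basicprop} for the output quivers, of acyclicity via the lifted rank function, and of injectivity via reconstruction from the maximal red/$\tau$ paths is sound, so the argument goes through; the only cost is that you must re-prove $\wild_r\le_{\nil}\wild_2$ from scratch rather than citing the paper's already-established Corollary \ref{corollary: L_m and L_n}, but in exchange you get a sharper constant and a self-contained argument.
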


In Section \ref{section: growh of indecomposable quiver reps}, we proceed to stratify quivers using the relations $\le_{\nil}$ and $\approx_{\nil}$. The position of a quiver within this stratification serves as a measure of its representation-theoretic complexity over $\FF_1$. Several $\approx_{\nil}$-equivalence classes represent properties with direct parallels within the representation theory of quivers over fields. For instance, we say that $Q$ has finite representation type (or is representation finite) over $\FF_1$ if there are only finitely many isomorphism classes of indecomposables in $\Rep(Q,\FF_1)_{\nil}$. We then prove the following:
 
\begin{nothmc}(Theorem \ref{theorem: finite type})
 The following are equivalent for a quiver $Q$.
 \begin{enumerate} 
 \item $Q$ has finite representation type over $\FF_1$. 
 \item $Q$ is a tree. 
 \item $Q\approx_{\nil} \wild_0$. 
 \end{enumerate}
\end{nothmc}

Note that Szczesny proved that trees have finite representation type over $\FF_1$ in \cite{szczesny2011representations}. 

We obtain similar results for quivers that do not have finite representation type over $\FF_1$. We say that $Q$ has bounded representation type over $\FF_1$ if $Q \le_{\nil} \wild_1$, where $\wild_1$ is the Jordan quiver. More explicitly, this means that there exists a positive constant $M$ such that $\NI_Q(n) \le M$ for all $n$. We prove the following characterization of quivers with bounded representation type over $\FF_1$.

\begin{nothmd}(Theorem \ref{theorem: cycle or tree if and only if bdd rep})
Let $Q$ be a connected quiver. Then $Q$ has bounded representation type over $\FF_1$ if and only if $Q$ is either a tree or of type $\tilde{\mathbb{A}}_n$. Moreover, $Q$ is a tree quiver if and only if $Q \approx_{\nil} \wild_0$, and $Q$ is of type $\tilde{\mathbb{A}}_n$ if and only if $Q \approx_{\nil} \wild_1$.
\end{nothmd}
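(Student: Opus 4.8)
The entire statement reduces to the growth of the single function $\NI_Q$, so I would begin by pinning down the two benchmark functions. For $\wild_0$ (one vertex, no arrows) an $\FF_1$-representation is just a pointed set, whose only indecomposable is one-dimensional, so $\NI_{\wild_0}(n)=1$ for $n=1$ and $0$ otherwise; for the Jordan quiver $\wild_1$ the nilpotent indecomposables are exactly the chains $x_1\mapsto\cdots\mapsto x_n\mapsto 0$, so $\NI_{\wild_1}(n)=1$ for all $n\ge 1$. Feeding these into the definition of $\le_{\nil}$ yields the dictionary I will use throughout: since $\NI_{\wild_0}(Cn)=0$ for $n\gg 0$, one has $Q\le_{\nil}\wild_0$ iff $\NI_Q$ is eventually zero (i.e. $Q$ is of finite type), and since $\wild_0\le_{\nil}Q$ holds vacuously, $Q\approx_{\nil}\wild_0$ iff $Q$ is of finite type; combined with Theorem \ref{theorem: finite type}, this already gives ``$Q$ is a tree $\iff Q\approx_{\nil}\wild_0$''. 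Likewise $\NI_{\wild_1}(Cn)=1$, so $Q\le_{\nil}\wild_1$ (bounded representation type) iff $\NI_Q$ is a bounded function, while $\wild_1\le_{\nil}Q$ forces $\NI_Q(Cn)\ge 1$ for all large $n$, in particular $\NI_Q$ is not eventually zero.

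For the remaining claims I would work entirely on the combinatorial side supplied by Proposition \ref{proposition: colored quiver}: because direct sums correspond to disjoint unions, $\NI_Q(n)$ equals the number of isomorphism classes of \emph{connected} colored quivers $\Gamma$ with $n$ vertices that are acyclic, admit a color map to $Q$, and have monochromatic connected subquivers equal to oriented paths (Lemma \ref{l.basicprop}). Suppose first that $Q$ is a cycle on $m$ vertices with arbitrary orientation. Only two edges are incident to each vertex of $Q$, and the path condition forces each vertex of $\Gamma$ to have at most one incident arrow of each of those two colors; hence $\Gamma$ has maximum degree $2$, and being connected and acyclic it is an (underlying) path. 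Its image in $Q$ is a walk that can never immediately retrace an edge, since two equally colored arrows meeting at a vertex would create in- or out-degree $2$ in that color; thus the walk is monotone around the cycle and is determined by a starting vertex and a direction. This gives $1\le \NI_Q(n)\le 2m$ for every $n$, so $\NI_Q$ is bounded and never eventually zero, whence $Q\approx_{\nil}\wild_1$ by the dictionary.

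Now suppose $Q$ is connected but neither a tree nor a cycle. A short graph-theoretic argument shows $Q$ then contains a cycle $C$ together with an edge $e\notin C$ incident to some vertex $u$ of $C$ (if every vertex of a cycle had degree $2$, connectedness would force $Q=C$). I would exploit this to build an exponentially large family: take the monotone walk wrapping $k$ times around $C$, and at each of the $k$ copies of $u$ either attach or omit a fresh pendant vertex joined by an arrow colored $e$. Each choice gives a connected acyclic colored quiver meeting the conditions of Lemma \ref{l.basicprop}, since the new $e$-arrows are pairwise disjoint and attaching pendant leaves cannot create a cycle, hence a nilpotent indecomposable via Proposition \ref{proposition: colored quiver}. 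Fixing the number of attached beads to $\lfloor k/2\rfloor$ produces $\binom{k}{\lfloor k/2\rfloor}$ representations of one fixed dimension; since a color-preserving isomorphism must respect the two ends of the underlying path, these fall into at least $\tfrac12\binom{k}{\lfloor k/2\rfloor}$ isomorphism classes. Letting $k\to\infty$ shows $\NI_Q$ is unbounded, so $Q\not\le_{\nil}\wild_1$. Assembling the pieces then finishes the proof: bounded type forces $Q$ to be a tree or a cycle (contrapositive of the last step), while trees are bounded by finite type and cycles by the second paragraph; and for the cycle refinement, $Q\approx_{\nil}\wild_1$ forces $\NI_Q$ bounded (so $Q$ is a tree or cycle) yet not eventually zero (so $Q$ is not a tree, by Theorem \ref{theorem: finite type}), leaving exactly the cycles.

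The main obstacle I anticipate is the honest bookkeeping in the two combinatorial classifications rather than any conceptual subtlety: making the ``no backtracking $\Rightarrow$ monotone walk'' step airtight for a cycle carrying an \emph{arbitrary} orientation, and, in the lower bound, controlling the color-preserving automorphisms of the beaded necklaces well enough to guarantee that the $\binom{k}{\lfloor k/2\rfloor}$ constructions genuinely fall into unboundedly many distinct isomorphism classes.
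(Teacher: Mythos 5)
Your overall architecture is sound and, in the ``unbounded'' direction, genuinely different from the paper's. The paper splits that direction in two: Proposition \ref{proposition: rank} uses a fundamental cycle basis of $H_1(\overline{Q},\ZZ_2)$ to embed $\Rep(\wild_2,\FF_1)_{\nil}$ fully faithfully whenever the cycle rank is at least $2$, and a separate ``beaded necklace'' construction on proper pseudotrees produces a family $\tilde{M}_0,\ldots,\tilde{M}_{n-1}$ giving only the linear bound $\NI_Q(3\ell n)\geq n$. Your single construction --- one cycle $C$, one extra incident edge $e$, and an arbitrary subset of the $k$ copies of $u$ decorated with pendant $e$-colored leaves --- handles both cases at once and yields an exponential lower bound $\tfrac12\binom{k}{\lfloor k/2\rfloor}$; the factor $\tfrac12$ correctly accounts for the only possible nontrivial symmetry (a chromatic isomorphism must carry the spine to the spine, and on a path it is either the identity or the reversal). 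What you lose relative to the paper is the stronger structural conclusion of Theorem \ref{theorem: not bdd type} (the fully faithful exact embedding of $\wild_2$ or a proper pseudotree, and $Q\approx_{\nil}\wild_2$ when $Q$ is not a pseudotree); what you gain is uniformity and a sharper growth estimate.

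There is one genuine error, in the cycle-quiver upper bound. You argue that a connected colored quiver $\Gamma$ over a cycle quiver has maximum degree $2$ and is acyclic, hence is an underlying path. But the acyclicity in Lemma \ref{l.basicprop}(2) only forbids \emph{oriented} cycles (its proof contradicts nilpotency of a composite $f_{i_d}\cdots f_{i_1}$ fixing a vertex); when the cycle quiver is not equioriented, the underlying undirected graph of $\Gamma$ can itself be a cycle with no directed cycle. These are exactly the ``band-type'' indecomposables $\tilde{I}_{[\ell n,i]}$ that the paper constructs and includes in its classification, and your count misses them entirely. The gap is repairable --- a degree-$2$ connected graph is a path or a cycle, and the cycle-shaped $\Gamma$'s are likewise determined by a starting vertex up to rotation, so $\NI_Q(n)$ is still bounded by a constant depending only on $|Q_0|$ --- but as written your classification of indecomposables over a cycle quiver is incomplete, and the claimed bound $\NI_Q(n)\le 2m$ is not justified. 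The bookkeeping dictionary relating $\le_{\nil}$ to the benchmark functions $\NI_{\wild_0}$ and $\NI_{\wild_1}$, and the final assembly of the three equivalences, are all correct.
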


In the above theorem, the equi-oriented cyclic orientation of $\tilde{\mathbb{A}}_n$ is allowed.  

 We also investigate quivers which do not have bounded representation type over $\FF_1$, and prove the following. By a pseudotree, we mean a graph with at most one cycle (or an orientation of such a graph). A proper pseudotree is a pseudotree which is not a tree or an $\tilde{\mathbb{A}}_n$. We then prove the theorem which follows. Note that the categories referenced in this theorem are all proto-exact, so that a suitable notion of short exact sequence can be defined within each. A functor between such categories is said to be exact if it preserves short exact sequences, for details see \cite{eppolito2018proto}. 

\begin{nothme}(Theorem \ref{theorem: not bdd type})
Suppose that $Q$ is a connected quiver that is not of bounded representation type. Then there exists a fully faithful, exact, indecomposable-preserving functor  
\[
\Rep(Q',\FF_1)_{\nil} \rightarrow \Rep(Q,\FF_1)_{\nil}, 
\] 
where $Q'$ is either a proper pseudotree or $\wild_2$. If $Q$ is not a pseudotree, then $Q \approx_{\nil} \wild_2$.	
\end{nothme}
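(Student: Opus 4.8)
The plan is to split on the first Betti number $b = |Q_1| - |Q_0| + 1$ of the underlying graph of $Q$, and to reduce the entire statement to the construction of a single explicit functor. By Theorem \ref{theorem: cycle or tree if and only if bdd rep}, a connected quiver that is not of bounded type is neither a tree nor a cycle, so $b \geq 1$. If $b = 1$, then $Q$ is a pseudotree that is neither a tree nor a cycle, i.e. a proper pseudotree, and I simply take $Q' = Q$ with the identity functor; the ``moreover'' clause is then vacuous because $Q$ is a pseudotree. Thus the substance lies in the case $b \geq 2$ (equivalently, $Q$ is not a pseudotree), where I must produce a fully faithful, exact, indecomposable-preserving functor $F \colon \Rep(\wild_2,\FF_1)_{\nil} \to \Rep(Q,\FF_1)_{\nil}$ and combine it with Theorem \ref{theorem: upper bound}.

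The construction of $F$ is the heart of the argument. Fix a spanning tree $T$ of the underlying graph of $Q$; since $b \geq 2$, there are at least two non-tree arrows, so I may choose distinct $e_1, e_2 \in Q_1 \setminus T$. Given a nilpotent $\wild_2$-representation $(X, g_1, g_2)$ (so $g_1, g_2$ are partial injections of the pointed set $X$), I define $F(X,g_1,g_2)$ by placing a copy of $X$ at every vertex of $Q$, assigning $\id_X$ to every arrow lying in $T$, assigning $g_1$ and $g_2$ to $e_1$ and $e_2$, and assigning the zero (basepoint) map to every remaining arrow; on morphisms, $F$ sends $h \colon X \to X'$ to the tuple all of whose components equal $h$. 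The crucial feature is that this recipe is insensitive to the orientations of $Q$: identities are legitimate partial injections in either direction, so one never needs to invert a structure map, which is exactly what would otherwise obstruct a uniform construction when the two independent cycles of $Q$ are not oriented cycles.

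I would then verify the four required properties in turn, using the colored-quiver characterization of Proposition \ref{proposition: colored quiver} for nilpotency. For well-definedness I must check that $\Gamma_{F(X,g_1,g_2)}$ is acyclic: any oriented cycle would project to an oriented closed walk in $Q$ supported on $T \cup \{e_1,e_2\}$, and since $T$ is acyclic such a walk must traverse $e_1$ or $e_2$ in the forward direction, so its effect on the $X$-coordinate is a nonempty word in $g_1, g_2$ fixing a point; this would be an oriented cycle in the colored quiver of $(X,g_1,g_2)$, contradicting its nilpotency via Lemma \ref{l.basicprop}. Full faithfulness holds because the identities along the connected tree $T$ force every component of a morphism between images to coincide, say with $h$, after which the relations at $e_1, e_2$ reduce precisely to $h g_i = g_i' h$; hence $\Hom_Q(FX, FX') \cong \Hom_{\wild_2}(X, X')$, and specializing to $X = X'$ gives $\operatorname{End}_Q(FX) \cong \operatorname{End}_{\wild_2}(X)$, so $F$ preserves indecomposability. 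Exactness with respect to the proto-exact structure is checked levelwise, since a subrepresentation or quotient of $(X,g_1,g_2)$ induces the same splitting in each copy of $X$ and is compatible with the identity and zero maps.

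Finally I would read off the growth estimate: each indecomposable $(X,g_1,g_2)$ of dimension $n$ has image of dimension $|Q_0|\,n$, and full faithfulness sends non-isomorphic objects to non-isomorphic objects, so $\NI_{\wild_2}(n) \leq \NI_Q(|Q_0|\,n)$, which is exactly $\wild_2 \le_{\nil} Q$ with $D = 1$ and $C = |Q_0|$. Together with $Q \le_{\nil} \wild_2$ from Theorem \ref{theorem: upper bound}, this yields $Q \approx_{\nil} \wild_2$, establishing both the ``moreover'' clause and the existence of the asserted functor with $Q' = \wild_2$. The step I expect to be the main obstacle is the well-definedness verification: arranging that $F$ lands in nilpotent representations for \emph{every} orientation of $Q$ while simultaneously keeping the dimension growth linear and indecomposability intact. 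The spanning-tree-of-identities device is what resolves it, by forcing every candidate oriented cycle through a ``dynamical'' arrow $e_1$ or $e_2$ and thereby annihilating it through the nilpotency of the source representation.
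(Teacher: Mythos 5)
Your proof is correct and follows essentially the same route as the paper: reduce via Theorem \ref{theorem: cycle or tree if and only if bdd rep} and a case split on the cycle rank, take $Q'=Q$ in the proper pseudotree case, and for rank $\geq 2$ build a fully faithful, exact, indecomposable-preserving functor out of $\Rep(\wild_2,\FF_1)_{\nil}$ by assigning identities along tree arrows and $g_1,g_2$ to two independent non-tree arrows, then combine with Theorem \ref{theorem: upper bound}. The only (cosmetic) difference is that the paper's Proposition \ref{proposition: rank} supports the image representation on a subquiver built from two fundamental cycles joined by a path rather than on all of $Q$ with a full spanning tree of identities; your explicit acyclicity check for nilpotency of $F(X,g_1,g_2)$ is a welcome detail the paper leaves implicit.
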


Finally, in Section \ref{section: hall algebra of full subcategories}, we study Hall algebras arising from full subcategories of $\textrm{Rep}(Q,\FF_1)_{\nil}$. For each quiver $Q$, the existence of the Hall algebra $H_{Q}$ is proved by Szczesny in \cite[Theorem 6]{szczesny2014hall}. Let $H_{Q,\nil}$ be the Hall algebra of $\Rep(Q,\FF_1)_{\nil}$.

\begin{nothmf}(Theorem \ref{theorem: hall algebra for $L_n$})
The Hall algebra $H_{Q,\nil}$ is isomorphic to the enveloping algebra $\mathbf{U}(\mathfrak{cq}_n)$ of a graded Lie algebra $\mathfrak{cq}_n$. The Lie algebra $\mathfrak{cq}_n$ has a basis corresponding to connected, acyclic coefficient quivers $(\Gamma,c)$.
\end{nothmf}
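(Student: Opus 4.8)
The plan is to identify $H_{Q,\nil}$ as a connected graded cocommutative Hopf algebra and then invoke the Milnor--Moore (Cartier--Quillen--Milnor--Moore) theorem, reading off $\mathfrak{cq}_n$ as its Lie algebra of primitives. First I would fix the grading by total dimension, $H_{Q,\nil} = \bigoplus_{d \geq 0}(H_{Q,\nil})_d$, where $(H_{Q,\nil})_d$ is spanned by the isomorphism classes of $d$-dimensional objects of $\Rep(Q,\FF_1)_{\nil}$. By Theorem~A (the colored quiver correspondence) these basis vectors are indexed by the colored quivers on $d$ vertices satisfying the three conditions; since there are only finitely many such for each $d$, every graded piece is finite-dimensional, and $(H_{Q,\nil})_0 = k$, so the bialgebra is connected. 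Taking the Hall product together with Szczesny's dual coproduct makes $H_{Q,\nil}$ a graded bialgebra (hence automatically a Hopf algebra, being connected and graded); the crucial structural input is that it is \emph{cocommutative}, which reflects the combinatorial symmetry of short exact sequences in the $\FF_1$-setting and which I would record from the proto-exact framework of \cite{szczesny2011representations, szczesny2014hall}.

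Over a base field of characteristic zero, Milnor--Moore then yields a canonical isomorphism $H_{Q,\nil} \cong \mathbf{U}(\mathfrak{P})$, where $\mathfrak{P}$ is the graded Lie algebra of primitive elements equipped with the commutator bracket of the Hall product (the commutator of two primitives is again primitive, so $\mathfrak{P}$ is closed under the bracket). This already gives the enveloping-algebra description, and I would set $\mathfrak{cq}_n := \mathfrak{P}$; it remains only to pin down its basis.

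The engine for the basis claim is a generating-function comparison in each degree. On one side, the Poincar\'e--Birkhoff--Witt theorem identifies $\mathbf{U}(\mathfrak{P}) \cong S(\mathfrak{P})$ as graded vector spaces, whence
\[
\sum_{d \geq 0} \dim (H_{Q,\nil})_d \, t^d = \prod_{d \geq 1}(1 - t^d)^{-\dim \mathfrak{P}_d}.
\]
On the other side, $\Rep(Q,\FF_1)_{\nil}$ satisfies Krull--Schmidt, so every object is uniquely a direct sum of indecomposables; writing $I_d$ for the number of $d$-dimensional indecomposables and counting multisets gives
\[
\sum_{d \geq 0} \dim (H_{Q,\nil})_d \, t^d = \prod_{d \geq 1}(1 - t^d)^{-I_d}.
\]
Comparing the two products forces $\dim \mathfrak{P}_d = I_d$ for all $d$. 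Finally, since a direct sum of representations corresponds to the disjoint union of the associated colored quivers, the $d$-dimensional indecomposables are exactly the \emph{connected} colored quivers on $d$ vertices satisfying the three conditions; hence $\dim \mathfrak{cq}_{n,d} = I_d$ equals the number of such connected colored quivers, giving a basis indexed as claimed.

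I expect the main obstacle to be establishing cocommutativity cleanly, since (unlike the $\FF_q$ case) it is special to $\FF_1$ and is precisely what upgrades the PBW-level symmetric-algebra identification to the enveloping-algebra conclusion. A secondary technical point is promoting the numerical identity $\dim \mathfrak{cq}_{n,d} = I_d$ to an \emph{explicit} basis canonically labelled by connected colored quivers; I would handle this by a unitriangularity argument, writing the symbol of each indecomposable as a primitive element plus a combination of products of strictly lower-degree symbols, so that these corrected primitives form the desired basis of $\mathfrak{cq}_n$.
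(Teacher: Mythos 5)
Your proposal is correct and follows the same overall skeleton as the paper: establish that $H_{Q,\nil}$ is a graded, connected, cocommutative Hopf algebra, apply Milnor--Moore to get $\mathbf{U}(\mathfrak{P})$ for the Lie algebra $\mathfrak{P}$ of primitives, and then use the colored-quiver correspondence (Lemma \ref{l.basicprop}(6) together with Proposition \ref{proposition: colored quiver}) to index the indecomposables by connected colored quivers satisfying conditions (1)--(3). The difference is in how the primitives are identified with the indecomposables. The paper simply cites Szczesny's Theorem 6, which already asserts $H_Q \simeq \mathbf{U}(\mathfrak{n}_Q)$ with $\mathfrak{n}_Q$ spanned by the classes of indecomposables; the underlying reason is that the coproduct is the split coproduct $\Delta(f)([M],[N]) = f([M\oplus N])$, so by Krull--Schmidt $\Delta(\delta_{[M]}) = \delta_{[M]}\otimes 1 + 1\otimes\delta_{[M]}$ \emph{exactly} when $M$ is indecomposable --- the delta functions of indecomposables are primitive on the nose. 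You instead re-derive this numerically via the PBW/Krull--Schmidt generating-function comparison $\prod_d (1-t^d)^{-\dim\mathfrak{P}_d} = \prod_d (1-t^d)^{-I_d}$ and then propose a unitriangular correction to manufacture an explicit basis. That argument is sound, and the generating-function identity is a nice independent check, but the final unitriangularity step is unnecessary here: the correction terms all vanish because the $\delta_{[M]}$ are already primitive, so $\{\delta_{[M]} : M \text{ indecomposable}\}$ is itself the desired basis. Also note that cocommutativity, which you flag as the main obstacle, is immediate from $M\oplus N \cong N\oplus M$ in the split coproduct and requires no appeal to the proto-exact machinery.
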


We also define the path monoid $M_Q$ of a quiver $Q$ and prove analogous statements of the classical equivalence of categories between $\textrm{Rep}(Q,k)$ and the category of left $kQ$-modules, where $k$ is a field and $kQ$ is the path algebra associated to $Q$. By using this observation, we link the Hall algebra of a certain subcategory of $H_\textrm{$\wild_n$,nil}$ with the Hall algebra of $\textbf{SK}_n$ of skew shapes introduced by Szczesny  in \cite{szczesny2018hopf}  which may be viewed as an $n$-dimensional generalization of the Hopf algebra of symmetric functions.

\bigskip

\textbf{Acknowledgment}\hspace{0.1cm} We would like to thank Ryan Kinser for helpful comments and various suggestions leading us to a upcoming companion of the current paper. We are also grateful to Matt Szczesny for his detailed
feedback and for pointing out some minor mistakes in the first draft (and for suggesting a way to fix them).  

\bigskip

\textbf{Data Sharing Statement}\hspace{0.1cm} Data sharing not applicable to this article as no datasets were generated or analysed during the current study.

%%%%%%%%%%%%%%%%%%%%%(PRELIMINARIES)%%%%%%%%%%%%%%%%%
	
\section{Preliminaries}

\subsection{Representation of quivers over $\mathbb{F}_1$}

In this section, we recall basic definitions and properties concerning representations of quivers over $\FF_1$ which will be used throughout the paper. 

\begin{mydef}
Let $\textrm{Vect}(\mathbb{F}_1)$ be the category whose objects are finite pointed sets $(V,0_V)$, and morphisms are pointed functions $f:V \to W$ such that $f|_{V-f^{-1}(0_W)}$ is an injection. We call $\textrm{Vect}(\mathbb{F}_1)$, the category of \emph{finite dimensional vector spaces} over $\mathbb{F}_1$. 
\end{mydef}

\begin{notation} 
For any natural number $n$, we let $[n]$ be the $\FF_1$-vector space $\{0,1,\ldots , n\}$. 
\end{notation}

In what follows, we will simply refer to objects (resp.~morphisms) in $\textrm{Vect}(\mathbb{F}_1)$ as $\mathbb{F}_1$-vector spaces (resp.~$\mathbb{F}_1$-linear maps). We first recall basic definitions. 

\begin{mydef}\label{definition: $F_1$-vectorspace}
Let $V$ and $W$ be $\mathbb{F}_1$-vector spaces. 
\begin{enumerate}
\item 
The \emph{direct sum} is defined as $V\oplus W:=V\sqcup W /\langle 0_V \sim 0_W\rangle$.
	\item 
By the \emph{dimension} of an $\mathbb{F}_1$-vector space $V$, we mean $\dim(V):=|V|-1$, the number of nonzero elements of $V$.
\item 
There exists a unique $\mathbb{F}_1$-linear map $0: V \to W$ sending any element in $V$ to $0_W$, called the \emph{zero map}.
\item 
$W$ is said to be a \emph{subspace} of $V$ if $W$ is a subset of $V$ containing $0_V$.
\item 
Let $W$ be a subspace of $V$. The \emph{quotient space} $V/W$ is defined to be $V-(W-\{0_V\})$. 
\item 
An endomorphism $f \in \textrm{End}(V)$ of an $\mathbb{F}_1$-vector space $V$ is said to be \emph{nilpotent} if $f^n=0$ for some $n \in \mathbb{N}$. 
\item 
For an $\mathbb{F}_1$-linear map $f:V \to W$, the \emph{kernel} of $f$ is $\ker(f):=f^{-1}(0_W)$.
\item 
For an $\mathbb{F}_1$-linear map $f:V \to W$, the \emph{cokernel} of $f$ is $\textrm{coker}(f):=W/f(V)$.%\footnote{Kernels and cokernels satisfy the usual universal property.}
\end{enumerate}
\end{mydef}

\begin{mydef}
Let $M$ be a monoid (not necessarily commutative) with an absorbing element $0_M$. By a left $M$-module, we mean a pointed set $S$  with a map $\cdot : M \times S \to S$ satisfying the following axioms: 
\begin{enumerate} 
\item $(ab)\cdot s = a\cdot (b\cdot s)$ for all $a, b \in M$ and $s\in S$.
\item $1_M\cdot s = s$ for all $s \in S$.
\item $0_M\cdot s = 0_S$, where $0_S$ is the distinguished element of $S$.
\end{enumerate}  
\end{mydef}

\begin{mydef}
A \emph{quiver} $Q$ is a finite directed graph (with possibly with multiple arrows and loops). We denote a quiver $Q$ as a quadruple $Q=(Q_0,Q_1,s,t)$;
\begin{enumerate}
	\item 
$Q_0$ and $Q_1$ are finite sets; $Q_0$ (resp.~$Q_1$) is the set of vertices (resp.~arrows),
\item
$s$ and $t$ are functions
\[
s,t:Q_1 \to Q_0
\] 
assigning to each arrow in $Q_1$ its \emph{source} and \emph{target} in $Q_0$. For each arrow $\alpha \in Q_1$, for the notational convenience, we let $s(\alpha)=\alpha_s$ and $t(\alpha)=\alpha_t$.
\end{enumerate}
\end{mydef}

We will simply denote a quiver by $Q$ or $Q=(Q_0,Q_1)$. We say that $Q$ is \emph{connected} if its underlying undirected graph is connected. We say that $Q$ is \emph{acyclic} if it does not contain any oriented cycles. 

Let $Q$ and $Q'$ be quivers. A \emph{quiver map} $f : Q \rightarrow Q'$ is a pair of functions  
\[ 
f_i : Q_i \rightarrow Q_i'
\] 
for $i = 0,1$ satisfying  
\[
s(f_1(\alpha)) = f_0(s(\alpha)) 
\] 
and  
\[t(f_1(\alpha)) = f_0(t(\alpha)), 
\]
for all $\alpha \in Q_1$. A quiver map $f$ is injective (resp.~surjective) if and only if both $f_0$ and $f_1$ are injective (resp.~surjective). 
By a \emph{subquiver} $S$ of $Q$, we simply mean a quiver $S = (S_0,S_1)$ such that $S_i \subseteq Q_i$ for $i = 0,1$. Of course, we can identify $S$ with the image of the obvious inclusion map $S \hookrightarrow Q$. We say that $S$ is \emph{full} if for any two vertices $u, v \in S_0$, any arrow in $Q$ from $u$ to $v$ (and from $v$ to $u$) belongs to $S_1$.   

\begin{mydef} 
For $n\geq 0$, we let $\wild_n$ denote the quiver with one vertex and $n$ loops. When $n=1$ this is known as the \emph{Jordan quiver}.
\end{mydef}

\begin{rmk} 
We denote the underlying graph of a quiver $Q$ by $\overline{Q}$. Throughout this paper, we will freely apply the basic terminology of undirected graphs to $Q$. This should be interpreted to mean that the corresponding graph-theoretic property holds for $\overline{Q}$. For instance, we will say that $Q$ is a tree, of type $\tilde{\mathbb{A}}_n$, or connected if $\overline{Q}$ is a tree, of type $\tilde{\mathbb{A}}_n$, or connected etc. 
\end{rmk}

\begin{mydef}\cite[Definition 4.1]{szczesny2011representations}\label{definition: representation of a quiver over $F_1$}
Let $Q$ be a quiver. By a representation of $Q$ over $\FF_1$ (or an $\FF_1$-representation of $Q$), we mean the collection of data $\mathbb{V}=(V_i,f_\alpha)$, $i\in Q_0$, $\alpha \in Q_1$: 
\begin{enumerate}
\item 
An assignment of an $\mathbb{F}_1$-vector space $V_i$ for each vertex $i \in Q_0$. 
\item 
An assignment of an $\mathbb{F}_1$-linear map $f_\alpha \in \Hom(V_{\alpha_s},V_{\alpha_t})$ for each arrow $\alpha \in Q_1$. 
\end{enumerate} 
\end{mydef}

\begin{mydef}\cite[Definition 4.3]{szczesny2011representations}
Let $Q$ be a quiver and $\mathbb{V}=(V_i,f_\alpha)$ be a representation of $Q$ over $\mathbb{F}_1$. 
\begin{enumerate}
	\item 
The \emph{dimension} of $\mathbb{V}$ is defined by:
\[
\dim(\mathbb{V})=\sum_{i \in Q_0} \dim(V_i).
\]
\item 
The \emph{dimension vector} of $\mathbb{V}$ is the $|Q_0|$-tuple:
\[
\underline{\dim}(\mathbb{V})=(\dim(V_i))_{i \in Q_0}. 
\]
\end{enumerate}
A representation $\mathbb{V}=(V_i,f_\alpha)$ is \emph{nilpotent} if there exists a positive integer $N$ such that for any $n \geq N$ and any path $\alpha_1\alpha_2\dots \alpha_n$ in $Q$ (left-to-right in the order of traversal), we have
\[
f_{\alpha_n}f_{\alpha_{n-1}}\cdots f_{\alpha_1}=0.
\]
\end{mydef}

For representations  $\mathbb{V}=(V_i,f_\alpha)$ and $\mathbb{W}=(W_i,g_\alpha)$ of a quiver $Q$ over $\FF_1$, a morphism $\Phi:\mathbb{V} \to \mathbb{W}$ is a collection of $\FF_1$-linear maps $(\phi_i)_{i \in Q_0}$, where $\phi_i:V_i \to W_i$, such that the following commutes for each $i \in Q_0$: 
\begin{equation}
\begin{tikzcd}[row sep=large, column sep=1.5cm]
V_{\alpha_s}\arrow{r}{\phi_{\alpha_s}}\arrow{d}{f_\alpha}
& W_{\alpha_s} \arrow{d}{g_\alpha} \\
V_{\alpha_t} \arrow{r}{\phi_{\alpha_t}} 
& W_{\alpha_t}
\end{tikzcd}
\end{equation}
This defines the category $\textrm{Rep}(Q,\mathbb{F}_1)$ of representations of $Q$ over $\mathbb{F}_1$. We let $\textrm{Rep}(Q,\mathbb{F}_1)_{\textrm{nil}}$ be the full subcategory of $\textrm{Rep}(Q,\mathbb{F}_1)$ consisting of nilpotent representations. 

Each morphism $\Phi \in \Hom(\mathbb{V},\mathbb{W})$ has a kernel and cokernel defined in a component-wise way by using Definition \ref{definition: $F_1$-vectorspace}. Similarly, one defines sub-representations and quotient representations. See \cite[Definition 4.3]{szczesny2011representations} for details.

Recall that an $\FF_1$-representation $\mathbb{V}$ of a quiver $Q$ is \emph{indecomposable} if $\mathbb{V}$ cannot be written as a nontrivial direct sum of sub-representations 

\begin{mydef}
We say that a quiver $Q$ has \emph{finite representation type} over $\FF_1$ if there are finitely many isomorphism classes of indecomposables in $\Rep (Q,\FF_1)_{\nil}$. 
\end{mydef}

\begin{rmk}
Let $Q$ be a quiver. In \cite{szczesny2011representations}, Szczesny proves that the Krull-Schmidt Theorem holds for $\Rep(Q,\FF_1)$: any object $M$ can be written uniquely (up to permutation) as a finite direct sum $M = M_1 \oplus \cdots \oplus M_k$ of indecomposable representations. The same statement holds for $\Rep(Q,\FF_1)_{\nil}$. 
\end{rmk}

Let $Q$ be a quiver. For any dimension $d$, $Q$ has finitely many isomorphism classes of $d$-dimensional representations over $\FF_1$. To see this, fix a dimension vector $\underline{d}$ and consider the following finite set:
\[
\Rep_{\underline{d}}(Q,\FF_1):=\prod_{\alpha \in Q_1}{\Hom_{\FF_1}({[\underline{d}(\alpha_s)]}, [{\underline{d}(\alpha_t)}])}. 
\]
 Any representation of $Q$ with dimension vector $\underline{d}$ can be identified with a point of $\Rep_{\underline{d}}(Q,\FF_1)$ by fixing an ordering on the nonzero elements at each vertex. The group  
 \[ 
 \operatorname{GL}_{\underline{d}}(\FF_1) = \prod_{v \in Q_0}{\operatorname{Aut}_{\FF_1}([{\underline{d}(v)}])}
 \] 
 acts on $\Rep_{\underline{d}}(Q,\FF_1)$ via  
 \[
 (\phi_v)_{v \in Q_0}\cdot (f_{\alpha})_{\alpha \in Q_1} =  \left(\phi_{t(\alpha)}f_{\alpha}\phi_{s(\alpha)}^{-1}\right)_{\alpha \in Q_1},
 \] 
 and it is easy to check that two points in $\Rep_{\underline{d}}(Q,\FF_1)$ correspond to isomorphic representations if and only if they lie in the same $\operatorname{GL}_{\underline{d}}(\FF_1)$-orbit. Since  $\Rep_{\underline{d}}(Q,\FF_1)$ is a finite set, there are finitely many isomorphism classes of $\underline{d}$-dimensional representations. Since there are only finitely-many dimension vectors $\underline{d}$ with $d = \sum_{i \in Q_0}{\underline{d}(i)}$, there are finitely many isomorphism classes of $d$-dimensional representations of $Q$ for any natural number $d$. Similar statements hold for nilpotent representations. We can therefore count the number of (nilpotent) indecomposable representations of any dimension. The following is a key definition in this paper. 
 \begin{mydef} \label{definition: indecomposables growth}
 Let $Q$ be a quiver.
 \begin{enumerate} 
 \item  $\I_Q : \NN \rightarrow \NN$ is the function such that
 \[ 
 I_Q(n) = \#\{\text{isomorphism classes of $n$-dimension indecomposables in $\Rep(Q,\FF_1)$} \}.
 \]
\item $\NI_Q : \NN \rightarrow \NN$ is the function such that
 \[
 \NI_Q(n) = \#\{\text{isomorphism classes of $n$-dimensional indecomposables in $\Rep(Q,\FF_1)_{\nil}$} \}. 
 \] 
 \end{enumerate}
 \end{mydef} 
Of course, $Q$ is of finite representation type if and only if $\NI_Q(n) = 0$ for $n \gg 0$. Similar functions have been considered for representations over finite fields, see for instance \cite{Kac1982}.

\subsection{The Hall algebra of $\textrm{Rep}(Q,\mathbb{F}_1)$}

In this section, we briefly recall the Hall algebras of $\textrm{Rep}(Q,\mathbb{F}_1)$ and $\textrm{Rep}(Q,\mathbb{F}_1)_{\textrm{nil}}$ for a quiver $Q$. We refer the interested reader to \cite[Section 6]{szczesny2011representations} for details. 

Let $Q$ be a quiver, and let $\textrm{Iso}(Q)$ be the set of isomorphism classes of objects in $\textrm{Rep}(Q,\mathbb{F}_1)$. The Hall algebra $H_Q$ of $\textrm{Rep}(Q,\mathbb{F}_1)$ has the following underlying set:
\begin{equation}\label{eq: hall algebra}
H_Q:=\{f:\textrm{Iso}(Q) \to \mathbb{C} \mid \#\{\textrm{supp}(f)\}< \infty\},
\end{equation}
where $\textrm{supp}(f)=\{[M] \in \textrm{Iso}(Q) \mid f([M])\neq 0\}.$ For each $[M] \in \textrm{Iso}(Q)$, we let $\delta_{[M]}$ be the delta function in $H_Q$ supported at $[M]$. Then, we may consider $H_Q$ as the $\mathbb{C}$-vector space spanned by $\{\delta_{[M]}\}_{[M] \in \textrm{Iso}(Q)}$. In what follows, by abuse of notation, we will simply denote the delta function $\delta_{[M]}$ by $[M]$. One defines the following multiplication on delta functions:
\begin{equation}\label{eq: hall product}
[M]\cdot[N]:=\sum_{R \in \textrm{Iso}(Q)} \frac{\mathbf{P}^R_{M,N}}{a_Ma_N}[R],
\end{equation}
where $a_M=|\textrm{Aut}(M)|$ and $\mathbf{P}^R_{M,N}$ is the number of short exact sequences of the form:\footnote{By a short exact sequence, we mean that ``$\ker=\textrm{im}$'' as in the classical case.}
\[
0 \to N \to R \to M \to 0.
\]
Then, one has the following equality:
\[
\textbf{a}^R_{M,N}:=|\{L \subseteq R \mid L\simeq N\textrm{ and } R/L \simeq M\}| = \frac{\mathbf{P}^R_{M,N}}{a_Ma_N}.
\]
By linearly extending \eqref{eq: hall product} to $H_Q$, one obtains an associative $\mathbb{C}$-algebra $H_Q$. %with a canonical grading by $K_0(\textrm{Rep}(Q,\mathbb{F}_1))^+$. 
Furthermore, $H_Q$ is also equipped with the following coproduct:
\begin{equation}\label{eq: hall coprod}
\Delta:H_Q\to H_Q\otimes_\mathbb{C}H_Q, \quad \Delta(f)([M],[N])=f([M\oplus N]).
\end{equation}

With \eqref{eq: hall product} and \eqref{eq: hall coprod}, Szczesny proves the following:

\begin{mythm}\cite[Theorem 6]{szczesny2011representations}\label{theorem: Szczesny Hall algebra}
With the same notation as above, $H_Q$ is a graded, connected, and co-commutative Hopf algebra. In particular, by the Milnor-Moore theorem, $H_Q\simeq \mathbf{U}(\mathfrak{n}_Q)$, where $\mathfrak{n}_Q$ is the pro-nilpotent Lie algebra spanned by indecomposables $[M] \in \textrm{Iso}(Q)$.
\end{mythm}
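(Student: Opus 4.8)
The plan is to establish that $H_Q$, equipped with the product \eqref{eq: hall product} and the coproduct \eqref{eq: hall coprod}, is a graded connected cocommutative bialgebra over $\mathbb{C}$, from which the Hopf structure and the Milnor-Moore isomorphism follow almost formally. The first task is the algebra structure. I would obtain associativity of \eqref{eq: hall product} in the standard Hall-theoretic way: both $([M]\cdot[N])\cdot[P]$ and $[M]\cdot([N]\cdot[P])$ expand with the coefficient of $[R]$ equal to the number of two-step flags $\mathbb{O}\subseteq L_1 \subseteq L_2 \subseteq R$ of sub-representations with $L_1 \cong P$, $L_2/L_1 \cong N$, and $R/L_2 \cong M$. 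This symmetric, bracketing-independent description is exactly what the proto-exact structure of $\Rep(Q,\FF_1)$ guarantees, since kernels, cokernels, and the relevant short exact sequences behave as in the classical setting; finiteness of all structure constants (hence well-definedness of the product) is ensured by the fact, recorded above, that there are finitely many isomorphism classes in each dimension. The grading is by dimension vector: in any short exact sequence $\mathbb{O}\to N \to R \to M \to \mathbb{O}$ one has $\underline{\dim}(R) = \underline{\dim}(M) + \underline{\dim}(N)$, so the product is homogeneous for the $K_0(\Rep(Q,\FF_1))^+$-grading, and the degree-zero component is $\mathbb{C}\cdot[\mathbb{O}]$, giving connectedness. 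Since each graded piece is finite-dimensional, the coproduct $\Delta([R]) = \sum_{A \oplus B \cong R} [A]\otimes[B]$ is a finite sum landing in $H_Q \otimes_{\mathbb{C}} H_Q$.

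Cocommutativity and coassociativity of $\Delta$ are immediate from $A\oplus B \cong B\oplus A$, associativity of $\oplus$ up to isomorphism, and Krull-Schmidt. The technical heart of the argument, and the step I expect to be the main obstacle, is the bialgebra compatibility $\Delta([M]\cdot[N]) = \Delta([M])\cdot\Delta([N])$ with respect to the untwisted tensor-product algebra structure on $H_Q \otimes H_Q$. Unwinding both sides, this reduces to the Green-type identity
\[
\mathbf{a}^{A\oplus B}_{M,N} = \sum_{\substack{M_1 \oplus M_2 \cong M\\ N_1 \oplus N_2 \cong N}} \mathbf{a}^{A}_{M_1,N_1}\,\mathbf{a}^{B}_{M_2,N_2}
\]
for every pair $A,B$. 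Combinatorially, the left-hand side counts sub-representations $L \subseteq A\oplus B$ with $L \cong N$ and $(A\oplus B)/L \cong M$, and the claim is that each such $L$ distributes uniquely across the decomposition $A\oplus B$, inducing decompositions $M = M_1 \oplus M_2$ and $N = N_1 \oplus N_2$ for which $L$ realizes $A$ as an extension of $M_1$ by $N_1$ and $B$ as an extension of $M_2$ by $N_2$. Unlike the $\FF_q$ case, where Green's theorem forces a twist by the Euler form, over $\FF_1$ the rigidity of $\FF_1$-linear maps (each of which is injective away from its kernel) should make this bijection hold without any twist, which is precisely why the resulting coproduct is cocommutative. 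I would prove the identity by constructing this bijection explicitly at the level of pointed sets, using that a sub-representation of $A\oplus B$ is determined by its underlying pointed subset together with compatibility with the structure maps; the care required is in verifying that the induced splittings of $M$ and $N$ are well-defined and that the recombination map is a genuine two-sided inverse.

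Granting the bialgebra structure, the conclusion follows formally. A graded connected bialgebra automatically admits an antipode, given by the usual degreewise recursion (equivalently Takeuchi's formula), since the augmentation ideal is concentrated in positive degrees; hence $H_Q$ is a graded connected cocommutative Hopf algebra. As $\mathbb{C}$ has characteristic zero, the Milnor-Moore theorem yields $H_Q \cong \mathbf{U}(P(H_Q))$, where $P(H_Q)$ is the Lie algebra of primitives. It then remains to identify $P(H_Q)$ with the span of the indecomposables: from $\Delta([M]) = \sum_{A\oplus B\cong M}[A]\otimes[B]$, the element $[M]$ is primitive if and only if the only decompositions of $M$ are the trivial ones $M\cong M\oplus\mathbb{O}$, i.e.\ if and only if $M$ is indecomposable. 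The indecomposables are thus linearly independent primitives, and that they span all of $P(H_Q)$ follows by comparing graded dimensions: by Krull-Schmidt the monomials in the indecomposables span $H_Q$ and match a Poincar\'e-Birkhoff-Witt basis of $\mathbf{U}(P(H_Q))$. This identifies $\mathfrak{n}_Q$ as the pro-nilpotent Lie algebra spanned by the indecomposable classes and completes the proof.
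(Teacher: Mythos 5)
The paper does not prove this statement---it is quoted directly from Szczesny \cite{szczesny2011representations}---so there is no internal proof to compare against. Your reconstruction is correct and follows the standard (and Szczesny's) line: in particular, you correctly isolate the key $\FF_1$-specific fact that any subrepresentation $L \subseteq A \oplus B$ splits as $(L\cap A)\oplus(L\cap B)$ (the same fact this paper invokes in the proof of Lemma \ref{l.basicprop}(6)), which is exactly what makes the untwisted Green-type identity, hence the bialgebra axiom and cocommutativity, go through, with the Milnor--Moore step and the identification of primitives with indecomposables then following formally as you describe.
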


One can apply a similar construction to the full subcategory $\textrm{Rep}(Q,\mathbb{F}_1)_{\textrm{nil}}$ of $\textrm{Rep}(Q,\mathbb{F}_1)$ to obtain a Hopf algebra $H_\textrm{Q,nil}$; see \cite[Remark 2]{szczesny2011representations}. We will study $H_\textrm{Q,nil}$ in Section \ref{section: hall algebra of full subcategories} by using \emph{coefficient quivers} to be introduced in Section \ref{section: coefficient quivers}.

\subsection{Base change functors}

In many cases, there is a functor which allows one to perform ``base-change'' from $\mathbb{F}_1$-objects to classical objects over a field $k$. By abuse of notation, we will denote these functors by $k\otimes_{\FF_1}-$ when there is no potential confusion. Here are two typical examples of base change functors to be used in this paper.

\begin{myeg}
Let $k$ be a field and $V$ be a vector space over $\FF_1$. We let $V_k$ be the vector space whose basis is $V\backslash \{0_V\}$. For an $\FF_1$-linear map $f:V \to W$, we let $f_k$ be the linear from $V_k$ to $W_k$ induced by $f$. This defines a functor as follows:
\begin{equation}\label{eq: base change vec}
k\otimes{\FF_1}-: \textrm{Vect}(\mathbb{F}_1) \to 	\textrm{Vect}(k), \quad V\mapsto V_k.
\end{equation}
Through the above base chance functor, representations of a quiver can be defined in a more categorical way as follows. Let $Q$ be a quiver. One can consider the free category associated to $Q$; objects are vertices of $Q$ and morphisms are paths. The identity morphisms correspond to the ``stationary paths'' of length zero at each vertices. Then, a representation $M$ of $Q$ over $\FF_1$ is nothing but a functor $\mathbf{M}: \mathcal{Q} \to \textrm{Vect}(\mathbb{F}_1)$. In particular, $\textrm{Rep}(Q,\FF_1)$ is equivalent to the functor category $\textrm{Vect}(\mathbb{F}_1)^{\mathcal{Q}}$. In fact, the same description holds for representations of $Q$ over a field $k$. Therefore, from the base change functor \eqref{eq: base change vec}, one has the following base change functor which is exact and faithful (but not full in general):
\begin{equation}\label{eq: base change}
	k\otimes_{\FF_1} -:\textrm{Rep}(Q,\FF_1) \to \textrm{Rep}(Q,k).
\end{equation}	
%This categorical perspective is sometimes quite useful. For instance, the Hall algebra in Theorem \ref{theorem: Szczesny Hall algebra} can be also constructed as follows: notice that the category $\Vect(\FF_1)$ is proto-exact in the sense of Dyckerhoff and Kapranov \cite{dyckerhoff2012higher}, where they also prove that for a small category $\mathcal{I}$ the functor category $\mathcal{C}^\mathcal{I}$ is proto-exact for a proto-exact category $\mathcal{C}$. Now, from the equivalence $\textrm{Rep}(Q,\FF_1)\simeq \textrm{Vect}(\mathbb{F}_1)^{\mathcal{Q}}$, one obtains Szczesny's Hall algebra in \cite{dyckerhoff2012higher} as the Hall algebra associated a proto-exact category $\textrm{Rep}(Q,\FF_1)$.
\end{myeg}

The following is another example of base-change functors which will be used in \S \ref{section: hall algebra of full subcategories}.

\begin{myeg}\label{example: adjunction for scalar extensions}
Let $\mathbf{Mon}$ be the category of monoids (not necessarily commutative), $k$ a field, and $\mathbf{Alg}_k$ the category of $k$-algebras. Then, one has the following functor:
\[
k\otimes_{\mathbb{F}_1}-: \mathbf{Mon} \to \mathbf{Alg}_k, \quad M \mapsto k[M],
\]
where $k[M]$ is the monoid algebra of $M$ over $k$. We also have an obvious forgetful functor
\[
\mathcal{U}: \mathbf{Alg}_k \to \mathbf{Mon}, \quad A \mapsto (A,\times).
\]
One can easily see that $k\otimes_{\mathbb{F}_1}-$ is a left adjoint of $\mathcal{U}$. 
\end{myeg} 

%%%%%%%%%%%%%%%(COEFFICIENT QUIVERS)%%%%%%%%%%%%%%%%%%%  
\section{The coefficient quiver of a representation}\label{section: coefficient quivers}  

In this section, we show how to associate a coefficient quiver to each object in $\Rep(Q,\FF_1)_{\nil}$. A similar idea for $Q = \wild_1$ was explored in \cite{szczesny2014hall}. This concept will prove essential for the results in following sections, as it reduces problems in $\Rep(Q,\FF_1)$ to purely combinatorial problems about coefficient quivers. 

Within the realm of quiver representations, similar constructions are widespread. For instance, our coefficient quiver will be a coefficient quiver in the sense of \cite{Ringel1998}, and it will yield a \emph{quiver over $Q$} as in \cite{Kinser2010}. The authors would like to thank Ryan Kinser for alerting us to these connections. 

\begin{mydef}\label{definition: coefficient quiver}
Let $\mathbb{V}$ be an $\FF_1$-representation of $Q$. First, we associate to $\mathbb{V}$ a quiver $\Gamma_{\mathbb{V}}$, whose vertex set is  
\[
(\Gamma_{\mathbb{V}})_0 = \bigsqcup_{v \in Q_0}{(\mathbb{V}_v\setminus\{0\})}.
\]

For each $\alpha \in Q_1$, we draw an arrow $(\alpha,i,j)$ in $\Gamma_{\mathbb{V}}$ from $i$ to $j$  if and only if $\mathbb{V}_{\alpha}(i) = j$. We then associate to $\Gamma_{\mathbb{V}}$ a quiver map $c_{\mathbb{V}} : \Gamma_{\mathbb{V}} \rightarrow Q$. This map is defined on vertices via the formula
\[ 
c_{\mathbb{V}}(i) = v,
\]
for all $i \in \mathbb{V}_v$. It is defined on arrows via the formula 
\[ 
c_{\mathbb{V}}(\alpha,i,j) = \alpha.
\]
The ordered pair $(\Gamma_{\mathbb{V}},c_{\mathbb{V}})$ is called the \emph{coefficient quiver} of $\mathbb{V}$.
\end{mydef}   

\begin{rmk} 
Definition \ref{definition: coefficient quiver} is equivalent to applying the coefficient quiver construction of \cite{Ringel1998} to $k\otimes_{\FF_1}\mathbb{V}$, where $k$ is any field. In turn, the map $c_{\mathbb{V}}: \Gamma_{\mathbb{V}} \rightarrow Q$ is a quiver over $Q$ as in \cite{Kinser2010}.
\end{rmk} 

\begin{conv}\label{remark: coloring convention}
In practice it is often cumbersome to work with the map $c_{\mathbb{V}} : \Gamma_{\mathbb{V}} \rightarrow Q$ directly. Instead, it is more convenient to view the coefficient quiver of $\mathbb{V}$ as a coloring of the vertices (resp. arrows) of $\Gamma_{\mathbb{V}}$ by the vertices (resp. arrows) of $Q$. When we wish to emphasize this viewpoint, we will speak of $\Gamma_{\mathbb{V}}$ as a \emph{colored quiver}. We will adopt some terminology specific to this viewpoint throughout. In particular: if $v \in Q_0$ and $\alpha \in Q_1$, we will refer to a vertex $x$  of $\Gamma_{\mathbb{V}}$ as \emph{$v$-colored}  if $c_{\mathbb{V}}(x) = v$, an arrow $\beta$ as \emph{$\alpha$-colored} if resp. $c_{\mathbb{V}}(\beta) = \alpha$, and a path $p = \beta_1\cdots \beta_t$ as \emph{$\alpha$-colored} if $c(\beta_i) = \alpha$ for all $i$. Of course, we also refer to $v$ as the color of $x$, and $\alpha$ as the color of $\beta$. This convention also simplifies several statements in this work, which would otherwise need to be formulated in terms of the fibers of $c_{\mathbb{V}}$.
\end{conv}  

In subsequent sections we will want to glue two coefficient quivers at a vertex. The following definition formalizes this process.

\begin{mydef}\label{definition: gluing}
Let $c : \Gamma \rightarrow Q$ and $c' : \Gamma' \rightarrow Q$ be two quiver maps. Suppose that $v \in \Gamma_0$ and $v' \in \Gamma'_0$ satisfy $c(v) = c'(v')$. We define a new quiver $\Gamma\sqcup_{v\sim v'}\Gamma'$ whose vertices are the elements of $(\Gamma_0)\setminus\{v\}$, the elements of $(\Gamma'_0)\setminus\{v'\}$ and a new element $g$. To define the arrows of this quiver, consider the map  
\[
q_0 : \Gamma_0 \sqcup \Gamma'_0 \rightarrow (\Gamma \sqcup_{v\sim v'}\Gamma')_0
\] 

\noindent specified by  
\[
q_0(v) = q_0(v') = g
\]
 and  
 \[
 q_0(x) = x,\text{ for all $x \in \Gamma_0\setminus\{v\}\sqcup \Gamma'_0\setminus\{v'\}$.} 
 \]
 \noindent Then for each $\alpha \in \Gamma_1 \sqcup \Gamma'_1$ we draw an arrow $q_1(\alpha)$ in $\Gamma\sqcup_{v\sim v'}\Gamma'$ from $q_0(s(\alpha))$ to $q_0(t(\alpha))$. The ordered pair $q = (q_0,q_1)$ is then a quiver map  
 \[
 q : \Gamma \sqcup \Gamma' \rightarrow \Gamma\sqcup_{v\sim v'}\Gamma'. 
 \]
 We say that $\Gamma\sqcup_{v\sim v'}\Gamma'$ is the \emph{amalgam of $\Gamma$ and $\Gamma'$ along $v\sim v'$}. Since $c(v) = c'(v')$, there is a unique map $c\sqcup_{v\sim v'}c'$ making 
 \begin{equation*}
\begin{tikzcd}[row sep=2em]
\Gamma\sqcup\Gamma' \arrow[rr,"q"] \arrow[dr,swap,"c\sqcup c'"]
&& \Gamma\sqcup_{v\sim v'}\Gamma' \arrow[dl,,"c\sqcup_{v\sim v'}c'"] \\
& Q
\end{tikzcd}
\end{equation*}
commute, where $c\sqcup c'$ denotes the usual disjoint union of maps. We say that  
\[ 
c\sqcup_{v\sim v'}: \Gamma\sqcup_{v\sim v'}\Gamma' \rightarrow Q
\] 
is \emph{obtained by gluing $\Gamma$ and $\Gamma'$ along $v\sim v'$}. Note that $(\Gamma\sqcup_{v\sim v'}\Gamma',c\sqcup_{v\sim v'})$ is not necessarily the coefficient quiver for an $\FF_1$-representation of $Q$, although in many instances it will be.
\end{mydef} 

\begin{myeg}
Let $V_0=\{0_{V_0},1,2,3\}$. Consider the following representation $\mathbb{V}=(V_0,f_1,f_2)$ of $\wild_2$:
\[
\begin{tikzcd}
\bullet \arrow[loop left,looseness=20,"f_1"]
\arrow[loop right, looseness=20,"f_2"]
\end{tikzcd}
\]
where $f_1(n)=n-1$ for $n \in \{1,2,3\}$ and $f_2(n)=n-2$ for $n \in \{2,3\}$ and $f_2(1)=0_{V_0}$. Denote the arrow of $\wild_2$ corresponding to $f_1$ by $\alpha_1$, and the arrow corresponding to $f_2$ by $\alpha_2$. Then the coefficient quiver $\Gamma_{\mathbb{V}}$ is the following:
\[
\begin{tikzcd}[arrow style=tikz,>=stealth,row sep=2em]
1 
&& 2 \arrow[ll,swap, "\alpha_1"]\\
& 3 
\arrow[ul,  "\alpha_2"]
\arrow[ur,swap,"\alpha_1"]
\end{tikzcd}
\]
With the representation $\mathbb{W}=(V_0,g_1,g_2)$, where $g_1=f_2$ and $g_2(n)=n+1$ for $n \in \{1,2\}$ and $g_2(3)=0_{V_0}$, the coefficient quiver $\Gamma_{\mathbb{W}}$ is the following:
\[
\begin{tikzcd}[arrow style=tikz,>=stealth,row sep=2em]
1 \arrow[rr, "\alpha_2"]
&& 2 \arrow[dl, "\alpha_2"]\\
& 3 
\arrow[ul,"\alpha_1"]
\end{tikzcd}
\] 
Note that we label the arrows by their images under the maps $c_{\mathbb{V}}$ and $c_{\mathbb{W}}$, as in Convention \ref{remark: coloring convention}. We do not label the vertices by their images, since they all get mapped to the unique vertex of $\wild_2$.
\end{myeg} 

\begin{mydef} 
Let $c : \Gamma \rightarrow Q$ be a quiver map. 
\begin{enumerate} 
\item The map $c$ is a \emph{winding} if for any distinct $\beta, \gamma \in \Gamma_1$, $s(\beta) = s(\gamma)$ or $t(\beta ) = t(\gamma)$ implies $c(\beta) \neq c(\gamma)$. Viewing $\Gamma$ as a colored quiver, this means that $\Gamma$ contains no subquivers of the form 
\[
\bullet \xrightarrow[]{\alpha} \bullet \xleftarrow[]{\alpha} \bullet\text{ or } \bullet \xleftarrow[]{\alpha} \bullet \xrightarrow[]{\alpha} \bullet,
\] 
where $\alpha \in Q_1$.
\item Let $\alpha \in Q_1$. A vertex $v \in \Gamma_0$ is an \emph{$\alpha$-sink} if there is no arrow $\beta \in \Gamma_1$ with $s(\beta) = v$ and $c(\beta) = \alpha$. Similarly, we say that $v$ is an \emph{$\alpha$-source} if there is no arrow $\beta \in \Gamma_1$ with $t(\beta) = v$ and $c(\beta) = \alpha$. Viewing $\Gamma$ as a colored quiver, $v$ is an $\alpha$-sink (resp. $\alpha$-source) if no $\alpha$-colored arrow starts at $v$ (resp. terminates at $v$).  
\end{enumerate}
\end{mydef} 

\noindent The following lemma records some basic properties of the quiver $\Gamma_\mathbb{V}$. % when $\mathbb{V}$ is nilpotent.

\begin{lem}\label{l.basicprop}
%Let $\mathbb{V} = (V_u, f_{\alpha})$ be an object in $\Rep(Q,\FF_1)_{\nil}$.  

Then the following statements about $\Gamma_\mathbb{V}$ hold: 
\begin{enumerate}  
\item $c_{\mathbb{V}}:\Gamma_{\mathbb{V}} \rightarrow Q$ is a winding.   
\item Suppose that $x$ is a $v$-colored vertex of $\Gamma_{\mathbb{V}}$. Then the outdegree (resp. indegree) of $x$ in $\Gamma_{\mathbb{V}}$ is bounded above by the outdegree (resp. indegree) of $v$ in $Q$.
\item For each $\alpha \in Q_1$, the number of $\alpha$-sources equals the number of $\alpha$-sinks.   
\item $\mathbb{V}$ is nilpotent if and only if $\Gamma_{\mathbb{V}}$ is acyclic.
\item $\mathbb{V}$ is indecomposable if and only if $\Gamma_\mathbb{V}$ is connected.
\end{enumerate}
\end{lem}    

\begin{proof}   
$(1)$: For $\alpha \in Q_1$, let $S_{\alpha}$ denote the subquiver of $\Gamma_{\mathbb{V}}$ with the same vertex set, whose arrows are precisely the $\alpha$-colored arrows of $\Gamma_{\mathbb{V}}$. By the classification of $\FF_1$-linear endomorphisms in \cite[Lemma 3.1]{szczesny2011representations}, the connected components of $S_{\alpha}$ are either isolated vertices, equioriented quivers of type $\mathbb{A}_d$ or equioriented quivers of type $\tilde{\mathbb{A}}_d$. In particular, $S_{\alpha}$ has no subquivers of the form  
\[
\bullet \xrightarrow[]{\alpha} \bullet \xleftarrow[]{\alpha} \bullet\text{ or } \bullet \xleftarrow[]{\alpha} \bullet \xrightarrow[]{\alpha} \bullet,
\]
 from which the claim follows. 

$(2)$: If $x$ is a $v$-colored vertex and $\alpha \in Q_1$, then by (1) at most one $\alpha$-colored arrow starts at $v$ (resp. ends at $v$). However, $x$ can be the source (resp. target) of an $\alpha$-colored arrow if and only if $v$ is the source (resp. target) of $\alpha$ in $Q$.

$(3)$: This follows from (1) by considering the connected components of each $S_{\alpha}$. Indeed, any vertex of $\Gamma_\mathbb{V}$ is contained in a unique connected component of $S_{\alpha}$. If the connected component is acyclic, then it is an oriented path (possibly of length $0$) which starts at a unique $\alpha$-source and ends at a unique $\alpha$-sink. Otherwise the connected component is of type $\tilde{\mathbb{A}}_d$ and contributes no $\alpha$-sources or $\alpha$-sinks. 

$(4)$: If $\Gamma_\mathbb{V}$ has an oriented cycle $\beta_1\cdots \beta_d$ starting at $v$, and the color of $\beta_j$ is $\alpha_j$, then
\[
[f_{\alpha_j}\cdots f_{\alpha_1}](v) = v.
\]
But then no power of $f_{\alpha_j}\cdots f_{\alpha_1}$ is zero, and so $\mathbb{V}$ is not nilpotent. Conversely, assume that $\Gamma_{\mathbb{V}}$ is acyclic. Then $\Gamma_{\mathbb{V}}$ contains finitely-many oriented paths, and there exists a natural number $N$ such that all paths in $\Gamma_{\mathbb{V}}$ have length strictly less than $N$. Consider a path $\alpha_1\cdots \alpha_n$ in $Q$ such that $f_{\alpha_n}\cdots f_{\alpha_1} \neq 0$. Then there are two basis elements $u$ and $v$ of $\mathbb{V}$ such that  
\[ 
f_{\alpha_n}\cdots f_{\alpha_1}(u) = v.
\] 
In turn, this implies the existence of an oriented path $\beta_1\cdots \beta_n$ in $\Gamma_{\mathbb{V}}$ from $u$ to $v$, such that the color of $\beta_j$ is $\alpha_j$ for all $j$. By the definition of $N$ this means $n < N$. Hence, finitely-many paths of $Q$ act via non-zero maps in $\mathbb{V}$, and $\mathbb{V}$ is nilpotent.

$(5)$: Recall from \cite[Section 4]{szczesny2011representations} that if $\mathbb{U} \subseteq \mathbb{V}\oplus \mathbb{W}$ is subrepresentation, then 
\[
\mathbb{U}=(\mathbb{U}\cap \mathbb{V}) \oplus (\mathbb{U} \cap \mathbb{W}). 
\]
Note that any decomposition $\mathbb{V} \cong \mathbb{V}_1\oplus \mathbb{V}_2$ induces a decomposition $V_u = A_u \oplus B_u$ for each $u \in Q_0$, such that \\
\[ 
f_{\alpha}(A_{s(\alpha)}) \subseteq A_{t(\alpha)} 
\]  
and 
\[ 
f_{\alpha}(B_{s(\alpha)}) \subseteq B_{t(\alpha)}, 
\] 
for all $\alpha \in Q_1$.
If we define $A = \bigcup_{u \in Q_0}{A_u\setminus\{0\}}$ and $B = \bigcup_{u \in Q_0}{B_u\setminus\{0\}}$, it follows that any arrow starting in $A$ (resp. $B$) must end in $A$ (resp. $B$). Hence, the partition $A \sqcup B$ of $(\Gamma_{\mathbb{V}})_0$ induces a separation of $\Gamma_\mathbb{V}$. Conversely, a separation of $\Gamma_\mathbb{V}$ induces a partition of each $V_u\setminus\{ 0\}$, which can be extended to a direct sum decomposition of each $V_u$ which is compatible with the action of the arrows. In other words, a separation of $\Gamma_\mathbb{V}$ induces a direct sum decomposition of $\mathbb{V}$, and the claim follows.
\end{proof}

\begin{mydef}
Let $c : \Gamma \rightarrow Q$ and $c' : \Gamma' \rightarrow Q$ be quiver maps (thought of as colored quivers as necessary).
\begin{enumerate}
	\item 
By a \emph{coefficient morphism} $c \rightarrow c'$, we mean a quiver map $\phi : \Gamma \rightarrow \Gamma'$ such that the following diagram commutes:

\begin{equation*}
\begin{tikzcd}[row sep=2em]
\Gamma \arrow[rr,"\phi"] \arrow[dr,swap,"c"]
&& \Gamma' \arrow[dl,,"c'"] \\
& Q
\end{tikzcd}.
\end{equation*}

\noindent We refer to this morphism by the map $\phi$. We say that $\phi$ is a \emph{coefficient isomorphism} if and only if it is bijective on vertices and arrows. In terms of colored quivers, a coefficient morphism maps $v$-colored vertices to $v$-colored vertices and $\alpha$-colored arrows to $\alpha$-colored arrows.
\item 
A collection of vertices $\mathcal{U}$ in $\Gamma$ is said to be \emph{successor-closed} if for every oriented path from $v$ to $u$ in $\Gamma$, $u \in \mathcal{U}$ implies $v \in \mathcal{U}$ as well. A full subquiver is said to be successor-closed if its vertex set is successor-closed. 
\item 
A subset $\mathcal{D}$ is said to be \emph{predecessor-closed} if for every oriented path from $d$ to $v$ in $\Gamma$, $d \in \mathcal{D}$ implies $v \in \mathcal{D}$ as well. A full subquiver is said to be predecessor-closed if its vertex set is predecessor-closed.
\end{enumerate}
\end{mydef} 

\begin{construction}\label{construction: fullsubquiver}
Let $\mathbb{V} = (V_u, f_{\alpha})$ and $\mathbb{W} = (W_u, g_{\alpha})$ be $\FF_1$-representations of $Q$, and $\phi : \mathbb{V} \rightarrow \mathbb{W}$ a morphism with component maps $\phi_u : V_u \rightarrow W_u$. Let $\mathcal{U}_\phi$ be the full subquiver of $\Gamma_\mathbb{V}$ with the vertex set $\{ x \in (\Gamma_{\mathbb{V}})_0 \mid \phi(x) \neq 0 \}$. Then $\phi$ induces a coefficient morphism
 \[
\Gamma_\phi:\mathcal{U}_\phi \to \Gamma_\mathbb{W}
\]
sending a $u$-colored vertex $x$ to $\phi(x) = \phi_u(x) \in \mathbb{W}_u$.\footnote{Note that $\Gamma_\phi:\Gamma_\mathbb{V} \to \Gamma_{\mathbb{W}}$ does not exist in general since the vertices of $\Gamma_{\mathbb{W}}$ are nonzero elements of $W_0$.} Indeed, suppose that there is an $\alpha$-colored arrow $x \xrightarrow[]{\alpha} y$ in $\Gamma_\mathbb{V}$, that is, $f_{\alpha}(x) = y$. Since $\phi$ is a morphism of representations, we have
\[
\phi_{t(\alpha)} f_{\alpha} = g_{\alpha} \phi_{s(\alpha)}.
\]
In particular, if neither $\phi_{s(\alpha)}(x)$ nor $\phi_{t(\alpha)}(y)$ are zero, then  
\[ 
g_{\alpha}(\phi_{s(\alpha)}(x)) = \phi_{t(\alpha)}(f_{\alpha}(x)) = \phi_{t(\alpha)}(y).
\] 
Hence, there is an arrow $\phi_{s(\alpha)}(x) \xrightarrow[]{\alpha} \phi_{t(\alpha)}(y)$ in $\Gamma_\mathbb{W}$ and $\Gamma_{\phi}$ is a coefficient morphism. 
\end{construction}

\noindent The purpose of the next lemma is to describe morphisms in $\Rep(Q,\FF_1)$ in terms of coefficient quivers. 

\begin{lem} \label{lemma: hom bijection}
For any two $\FF_1$-representations $\mathbb{V}= (V_u, f_{\alpha})$ and $\mathbb{W} = (W_u, g_{\alpha})$ of $Q$, there is a bijection between $\Hom(\mathbb{V},\mathbb{W})$ and the set of coefficient isomorphisms from successor-closed full subquivers of $\Gamma_\mathbb{V}$ to predecessor-closed full subquivers of $\Gamma_\mathbb{W}$.
\end{lem}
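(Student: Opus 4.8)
The plan is to exhibit explicit mutually inverse assignments between the two sets. The forward direction is essentially furnished by Construction \ref{construction: fullsubquiver}: to a morphism $\phi : \mathbb{V} \to \mathbb{W}$ I would attach the chromatic quiver map $\Gamma_\phi : \mathcal{U}_\phi \to \Gamma_\mathbb{W}$, where $\mathcal{U}_\phi$ is the full subquiver on the nonzero elements outside $\ker(\phi)$. First I would verify that $\mathcal{U}_\phi$ is upwardly-closed: along any arrow $x \xrightarrow{\alpha} y$ of $\Gamma_\mathbb{V}$ the relation $\phi_{t(\alpha)} f_\alpha = g_\alpha \phi_{s(\alpha)}$ forces $\phi(y)=0$ whenever $\phi(x)=0$, and propagating this forward along a path shows that if the terminal vertex survives then so does every earlier one. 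Dually, I would check that the image $\mathcal{D}_\phi := \Gamma_\phi(\mathcal{U}_\phi)$, viewed as a full subquiver of $\Gamma_\mathbb{W}$, is downwardly-closed, reading the same relation in the other direction. Injectivity of $\Gamma_\phi$ on vertices follows from $\FF_1$-linearity (each $\phi_u$ is injective off its kernel) together with the fact that distinct vertex colors cannot collide; bijectivity on arrows follows from the same commutation relation (to produce preimages of arrows) and from the uniqueness of monochromatic out-arrows recorded in Lemma \ref{l.basicprop}. Thus $\Gamma_\phi$ is a bijective chromatic quiver map from an upwardly-closed to a downwardly-closed full subquiver.

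For the reverse direction, given such a bijective chromatic map $\psi : \mathcal{U} \to \mathcal{D}$, I would define $\phi=(\phi_u)$ by $\phi_u(x)=\psi(x)$ if $x \in \mathcal{U}$ and $\phi_u(x)=0$ otherwise (with $\phi_u(0)=0$). Linearity over $\FF_1$ is immediate, since $\psi$ is injective and hence $\phi_u$ is injective off its kernel. The real content is verifying $\phi_{t(\alpha)} f_\alpha = g_\alpha \phi_{s(\alpha)}$ for each arrow $\alpha$, which I would check pointwise on $x \in V_{s(\alpha)}$, splitting into cases according to whether $x$ lies in $\mathcal{U}$ and whether $f_\alpha(x)$ is zero. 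The only delicate case is $x \in \mathcal{U}$ with $y := f_\alpha(x) \notin \mathcal{U}$, where I must show $g_\alpha(\psi(x))=0$.

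This case is the main obstacle, and it is exactly where both closure hypotheses and the precise meaning of ``bijective quiver map'' enter. If $g_\alpha(\psi(x))$ were a nonzero vertex $e$, then $e$ would be an $\alpha$-successor of $\psi(x) \in \mathcal{D}$, hence $e \in \mathcal{D}$ by downward-closedness, so $e=\psi(z)$ for a unique $z \in \mathcal{U}$. Because ``bijective'' means bijective on arrows as well as on vertices (per the paper's convention), $\psi$ is an isomorphism of quivers and therefore reflects arrows: the arrow $\psi(x) \xrightarrow{\alpha} \psi(z)$ of $\mathcal{D}$ pulls back to an $\alpha$-arrow $x \to z$ in $\mathcal{U}\subseteq\Gamma_\mathbb{V}$. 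But Lemma \ref{l.basicprop} guarantees at most one $\alpha$-colored arrow out of $x$, forcing $z=y$ and hence $y \in \mathcal{U}$, a contradiction. The remaining cases are routine: when $x \in \mathcal{U}$ and $y=f_\alpha(x)\in\mathcal{U}$, fullness of $\mathcal{U}$ makes $x \to y$ an arrow that $\psi$ carries to $\psi(x)\xrightarrow{\alpha}\psi(y)$, giving $g_\alpha(\psi(x))=\psi(y)$; and when $x \notin \mathcal{U}$, upward-closedness of $\mathcal{U}$ forces any nonzero $f_\alpha(x)$ to lie outside $\mathcal{U}$ as well, so both sides vanish.

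Finally I would confirm the two assignments are mutually inverse. Starting from $\phi$ and running the reverse construction on $\Gamma_\phi$ recovers $\phi$, since the two agree on $\mathcal{U}_\phi$ by definition and both send the kernel to $0$. Starting from $\psi : \mathcal{U} \to \mathcal{D}$, the morphism $\phi$ satisfies $\mathcal{U}_\phi=\mathcal{U}$ and $\mathcal{D}_\phi=\mathcal{D}$ as full subquivers, and $\Gamma_\phi$ agrees with $\psi$ on vertices; since a bijective chromatic map between fixed full subquivers is determined by its vertex map, $\Gamma_\phi=\psi$. This establishes the desired bijection.
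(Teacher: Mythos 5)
Your proposal is correct and follows essentially the same route as the paper: the forward map is the restriction of $\Gamma_\phi$ from Construction \ref{construction: fullsubquiver}, the inverse is the extension-by-zero $\psi^\bullet$, and the delicate point (showing $g_\alpha(\psi(x))=0$ when $x$ is an $\alpha$-sink of $\mathcal{U}$ but not of $\Gamma_\mathbb{V}$) is resolved exactly as in the paper, by using downward-closedness of $\mathcal{D}$ together with bijectivity on arrows to see that $\alpha$-sinks of $\mathcal{U}$ land on $\alpha$-sinks of $\Gamma_\mathbb{W}$. Your explicit treatment of the case $x\notin\mathcal{U}$ via upward-closedness is a small completeness bonus over the paper's write-up, but not a different argument.
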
  

\begin{proof} 
	Let $\phi : \mathbb{V} \rightarrow \mathbb{W}$ be a morphism of representations. If necessary, we will denote the component maps of $\phi$ by $\phi_u:V_u \to W_u$: however, if it is unnecessary to specify the source, we will simply denote $\phi_u(x)$ as $\phi(x)$. Let $\mathcal{U}_\phi$ be the quiver from Construction \ref{construction: fullsubquiver} and let $\mathcal{D}_\phi$ be its image under $\Gamma_\phi$. Consider the coefficient morphism
	\[
	\phi_{\bullet} :\mathcal{U}_\phi \rightarrow \mathcal{D}_\phi
	\]
	obtained by restricting $\Gamma_\phi$, which is surjective on vertices and arrows. Note that $\phi_\bullet$ is also injective on vertices by the $\FF_1$-linearity of $\phi$ and on arrows by Lemma \ref{l.basicprop}(1). It is clear from the definition of $\Gamma_\mathbb{V}$ that $\mathcal{U}_\phi$ is successor-closed, so we only need to show that $\mathcal{D}_\phi$ is predecessor-closed. First, we prove that $\mathcal{D}_\phi$ is a full subquiver of $\Gamma_{\mathbb{W}}$. Suppose that $\beta$ is an $\alpha$-colored arrow of $\Gamma_{\mathbb{W}}$ whose source and target are in $\mathcal{D}_\phi$. Since $\phi_{\bullet}$ is surjective on vertices, there exist vertices $x$ and $y$ of $\mathcal{U}_\phi$ such that $s(\beta) = \phi(x)$ and $t(\beta) = \phi(y)$. Then 
	\[
	\phi_{\bullet}(y) = \phi(y) = g_\alpha(\phi(x)) = \phi f_{\alpha}(x) = \phi_{\bullet}(f_{\alpha}(x)),
	 \]
	  where the last equality follows from the fact that $\phi(y) \neq 0$. Hence $y = f_{\alpha}(x)$, so that there is an $\alpha$-colored arrow $\gamma$ in $\mathcal{U}_{\phi}$ with $s(\gamma) = x$ and $t(\gamma) = y$. It now follows that $\beta = \phi_{\bullet}(\gamma)$ is an arrow in $\mathcal{D}_\phi$, so that $\mathcal{D}_\phi$ is full. Now suppose that $x \in \mathcal{U}_\phi$, so that $\phi(x) \in \mathcal{D}_\phi$ and $\phi(x) \neq 0$. Suppose that there is an oriented path $\beta_1\cdots \beta_d$ in $\Gamma_\mathbb{W}$ starting at $\phi(x)$ and ending at a vertex $z$ (so that in particular $z\neq 0$). If $\alpha_j$ denotes the color of $\beta_j$, then this means
	\[
	[g_{\alpha_d}\cdots g_{\alpha_1}]\phi(x) = z.
	\]
	Then $0 \neq z = \phi[f_{\alpha_d}\cdots f_{\alpha_1}](x)$ since $\phi$ is a morphism,
	so that $[f_{i_d}\cdots f_{i_1}](x) \in \mathcal{U}_\phi$ and hence $z \in \mathcal{D}_\phi$. It now follows that $\mathcal{D}_\phi$ is predecessor-closed. 
	
	Conversely, suppose that $\mathcal{U}$ is a successor-closed full subquiver of $\Gamma_\mathbb{V}$, $\mathcal{D}$ is a predecessor-closed full subquiver of $\Gamma_\mathbb{W}$, and that $\psi: \mathcal{U} \rightarrow \mathcal{D}$ is a coefficient isomorphism. For each $u \in Q_0$, define the map
	\[
	\psi^{\bullet}_u : V_u \rightarrow W_u
	\]
	as
	\[
	\psi^{\bullet}_u(x) = 
	\begin{cases}
	\psi(x), \textrm{ for all $x \in \mathcal{U}_0 \cap V_u$},\\
	0,  \textrm{ otherwise}.
	\end{cases}
	\]
	The injectivity of $\psi$ on vertices immediately implies the $\FF_1$-linearity of $\psi^{\bullet}_u$. We claim that $\psi^{\bullet} = (\psi^{\bullet}_u)_{u \in Q_0}$ is a morphism $\mathbb{V} \rightarrow \mathbb{W}$: this is equivalent to the claim that for each arrow $\alpha$ and each $x \in V_{s(\alpha)}$,
	\[
	\psi^{\bullet}_{t(\alpha)}(f_{\alpha}(x)) = g_{\alpha}(\psi^{\bullet}_{s(\alpha)}(x)).
	\]
	Note that since $\psi$ is a coefficient isomorphism, any $\alpha$-sink of $\mathcal{U}$ is mapped to an $\alpha$-sink of $\mathcal{D}$ of the same color. Since $\mathcal{D}$ is predecessor-closed, any $\alpha$-sink of $\mathcal{D}$ is an $\alpha$-sink of $\Gamma_\mathbb{W}$. If $x \in V_{s(\alpha)}$ is an $\alpha$-sink of $\mathcal{U}$, then either $f_{\alpha}(x) = 0$ or $f_{\alpha}(x)\not\in \mathcal{U}$. In either case, we have
	\[
	\psi^{\bullet}_{t(\alpha)}(f_{\alpha}(x)) = 0 = g_{\alpha}(\psi^{\bullet}_{s(\alpha)}(x)).
	\]
	Otherwise, $x$ is not an $\alpha$-sink of $\mathcal{U}$ and hence $f_{\alpha}(u) \in \mathcal{U}$. This means that $\psi^{\bullet}_{s(\alpha)}(x) = \psi(x)$ and $\psi^{\bullet}_{t(\alpha)}(f_{\alpha} (x)) = \psi(f_{\alpha} (x))$. Of course, there is an $\alpha$-colored arrow from $x$ to $f_{\alpha}(x)$ in $\Gamma_\mathbb{V}$, and hence in $\mathcal{U}$. Since $\psi$ is a coefficient isomorphism, there is an $\alpha$-colored arrow from $\psi(x)$ to $\psi(f_{\alpha}(x))$ in $\mathcal{D}$. This happens if and only if $g_{\alpha}(\psi(x)) = \psi(f_{\alpha}(x))$, which means $g_{\alpha}\psi^{\bullet}_{s(\alpha)}(x) = \psi^{\bullet}_{t(\alpha)}(f_{\alpha}(x))$. Hence $\psi^{\bullet}$ is a morphism of representations as claimed.  
	
	Finally, we note that $(\phi_{\bullet})^{\bullet} = \phi$ and $(\psi^{\bullet})_{\bullet} = \psi$ for all $\phi$ and $\psi$. Hence, these two constructions yield the desired bijection.
\end{proof} 

\noindent With the description of morphisms provided by Lemma \ref{lemma: hom bijection}, we can show that $\Gamma_{\mathbb{V}}$ is an isomorphism invariant of $\mathbb{V}$.

\begin{pro}\label{proposition: colored quiver}
Suppose that $c: \Gamma \rightarrow Q$ is a winding. Then there is a coefficient isomorphism $\phi: \Gamma \rightarrow \Gamma_\mathbb{V}$ for some $\FF_1$-representation $\mathbb{V}$ of $Q$, and $\mathbb{V}$ is well-defined up to isomorphism.
\end{pro} 

\begin{proof}  
Define the representation $\mathbb{V} = (V_u, f_{\alpha})$ as follows: $V_u$ consists of the $u$-colored vertices of $\Gamma$ plus an element $0$. If $x \in V_{s(\alpha)}$ is an $\alpha$-sink, then we define $f_{\alpha}(x) = 0$; otherwise there is a unique $\alpha$-colored arrow $\beta$ in $\Gamma$ with source $x$, in which case we define $f_{\alpha}(x) = t(\beta)$ (the target of $\beta$). Since $c$ is a winding, $f_{\alpha}$ is an $\FF_1$-linear map of $V_{s(\alpha)} \rightarrow V_{t(\alpha)}$. Note that $\Gamma$ and $\Gamma_{\mathbb{V}}$ have the same vertex set: the identity map on vertices then extends to a coefficient isomorphism $\phi : \Gamma \rightarrow \Gamma_\mathbb{V}$. If $\mathbb{W}$ is another $\FF_1$-representation of $Q$ with a coefficient isomorphism $\phi' : \Gamma \rightarrow \Gamma_\mathbb{W}$, then $f = \phi' \circ \phi^{-1}$ is a coefficient isomorphism $f: \Gamma_\mathbb{V} \rightarrow \Gamma_\mathbb{W}$. Taking $\mathcal{U}$ to be $\Gamma_\mathbb{V}$ and $\mathcal{D}$ to be $\Gamma_\mathbb{W}$ as in Lemma \ref{lemma: hom bijection}, we see that the associated map $f^{\bullet} : \mathbb{V} \rightarrow \mathbb{W}$ is bijective, and hence an isomorphism.
\end{proof} 

\noindent We end this section with the following straightforward result.  

\begin{lem}\label{lem: quotient rep}
Let $Q$ be a quiver, $\mathbb{V}$ and $\FF_1$-representation of $Q$, and $\mathbb{W}$ be a subrepresentation of $\mathbb{V}$.
\begin{enumerate}
\item 
$\Gamma_\mathbb{W}$ is the full subquiver of $\Gamma_\mathbb{V}$ obtained by removing vertices (and arrows with adjacent to those vertices) which do not correspond to $\mathbb{W}$. The map $c_{\mathbb{W}}$ is the restriction of $c_{\mathbb{V}}$ to this subquiver.
\item 
The coefficient quiver $\Gamma_{\mathbb{V}/\mathbb{W}}$ of the quotient $\mathbb{V}/\mathbb{W}$ is the full subquiver of $\Gamma_{\mathbb{V}}$ obtained by removing vertices (and arrows adjacent to those vertices) corresponding to $\mathbb{W}$. The map $c_{\mathbb{V/W}}$ is the restriction of $c_\mathbb{V}$ to this subquiver. 
\item 
Let 
\[ 
0 \rightarrow \mathbb{V} \xrightarrow[]{f} \mathbb{X} \xrightarrow[]{g} \mathbb{W} \rightarrow 0 
\] 
be a short exact sequence of $\FF_1$-representations of $Q$. Then $\Gamma_\mathbb{X}$ is obtained from the disjoint union $\Gamma_\mathbb{V} \sqcup \Gamma_\mathbb{W}$ by adding certain $\alpha$-colored arrows from $\alpha$-sinks of $\Gamma_\mathbb{W}$ to $\alpha$-sources of $\Gamma_\mathbb{V}$, for each $\alpha \in Q_1$. Under this decomposition, $\Gamma_\mathbb{V}$ is a predecessor-closed subquiver of $\Gamma_\mathbb{X}$ and $\Gamma_\mathbb{W}$ is a successor-closed subquiver of $\Gamma_\mathbb{X}$.
\end{enumerate}	
\end{lem}  

\begin{proof} 
(1): Identify $\mathbb{W}$ with the inclusion map $\iota : \mathbb{W} \rightarrow \mathbb{V}$. Then the map $\iota_{\bullet} : \mathcal{U}_{\iota} \rightarrow \mathcal{D}_{\iota}$ of Lemma \ref{lemma: hom bijection} identifies $\mathcal{U}_{\iota} = \Gamma_{\mathbb{W}}$ with the subquiver $\mathcal{D}_{\iota}$ of $\Gamma_{\mathbb{V}}$, from which the claim follows. 

(2): Consider the projection map $\pi : \mathbb{V} \rightarrow \mathbb{V}/\mathbb{W}$ with $\ker(\pi) = \mathbb{W}$. As in Lemma \ref{lemma: hom bijection} we have a coefficient isomorphism $\pi^{\bullet}: \mathcal{U}_{\pi} \rightarrow \mathcal{D}_{\pi}$. Note that the vertices of $\mathcal{U}_{\pi}$ are precisely the non-zero elements of $\mathbb{V}$ which do not lie in $\mathbb{W}$, and that $\mathcal{D}_{\pi} = \Gamma_{\mathbb{V}/\mathbb{W}}$ since each non-zero element of $\mathbb{V}/\mathbb{W}$ lies in the image of $\pi$. The claim now follows. 

(3): Let $\mathbb{X} = (X_u,f_\alpha)$. Since $\mathbb{V} = \operatorname{Im}(f)$ is a subrepresentation of $\mathbb{X}$, (1) implies that $\Gamma_\mathbb{V}$ is a full subquiver of $\Gamma_\mathbb{X}$. It is predecessor-closed since $x \in \mathbb{V}$ implies $f_{\alpha}(x) \in \mathbb{V}$ for all $\alpha \in Q_1$. Then $\mathbb{X}/\mathbb{V} \cong \mathbb{W}$, so (2) implies that $\Gamma_\mathbb{W}$ is a full subquiver of $\Gamma_\mathbb{X}$ whose vertex set is $(\Gamma_\mathbb{X})_0\setminus (\Gamma_\mathbb{V})_0$. Since $\Gamma_\mathbb{V}$ is predecessor-closed and $(\Gamma_\mathbb{X})_0 = (\Gamma_\mathbb{V})_0 \sqcup (\Gamma_\mathbb{W})_0$, it follows that $\Gamma_\mathbb{W}$ is successor-closed. Hence, any $\alpha$-colored arrow $\beta$ of $\Gamma_\mathbb{X}$ which does not lie in $\Gamma_\mathbb{V}\sqcup\Gamma_\mathbb{W}$ must start in $\Gamma_\mathbb{W}$ and terminate in $\Gamma_\mathbb{V}$. The fact that the map $c_\mathbb{X} : \Gamma_\mathbb{X} \rightarrow Q$ is a winding then implies that $\beta$ must start at an $\alpha$-sink of $\Gamma_\mathbb{W}$ and terminate at an $\alpha$-source of $\Gamma_\mathbb{V}$.
\end{proof} 

%%%%%%%%%%%%%%n-LOOP QUIVER STUFF (NEW)%%%%%%%%%%%%%%%%%%%% 

\section{Representations of $n$-Loop Quivers over $\FF_1$}\label{section: n-loop representations}

In this section we apply the ideas of Section \ref{section: coefficient quivers} to $\wild_n$. Recall that $\wild_n$ is the quiver with one vertex and $n$ arrows $\alpha_1,\ldots , \alpha_n$, where we treat $\{\alpha_1,\ldots , \alpha_n\}$ as a totally ordered set in the obvious way. If $n=1$ this is known as the \emph{Jordan quiver}. An $\FF_1$-representation of $\wild_n$ is an $\FF_1$-vector space $V_0$ along with an ordered $n$-tuple $(f_1,\ldots ,f_n)$ of $\FF_1$-linear maps $f_i : V_0 \rightarrow V_0$, where $f_i$ corresponds to $\alpha_i$. If this representation is nilpotent, then the $f_i$ are all nilpotent (as are their powers and products). One can easily see that two representations $\mathbb{V} = (V_0,f_1,\ldots , f_n)$ and $\mathbb{W} = (W_0,g_1,\ldots ,g_n)$ of $\wild_n$ are isomorphic if and only if there is an $\FF_1$-linear isomorphism $\varphi : V_0 \rightarrow W_0$ such that $g_i = \varphi f_i\varphi^{-1}$ for each $i = 1,\ldots , n$. 

For any $1 \le i \le n$, there is a functor  
\[
D_i : \Rep(\wild_n,\FF_1)_{\nil} \rightarrow \Rep(\wild_{n-1},\FF_1)_{\nil}  
\]
defined by deleting the $i^{th}$ arrow and relabeling the arrows according to the total ordering on the remainder. 

\begin{construction}\label{construction: GammaFM} 
Let $n\geq 2$ be a natural number. If $M$ is a nilpotent $\FF_1$-representation of $\wild_n$, then we construct a new nilpotent $\FF_1$-representation of $\wild_{n-1}$, denoted $F(M)$. In light of Proposition \ref{proposition: colored quiver}, it suffices to construct its coefficient quiver $\Gamma_{F(M)}$. The coefficient quiver $\Gamma_{F(M)}$ will be the disjoint union of $\Gamma_{D_{n-1}(M)} \sqcup \Gamma_{D_n(M)}$ with the following extra arrows: for each maximal $\alpha_1$-colored path in $\Gamma_M$ with source $u$ and target $v$, draw an $\alpha_1$-colored arrow from the copy of $v$ in $\Gamma_{D_{n-1}(M)}$ to the copy of $u$ in $\Gamma_{D_n(M)}$. The map $c_{F(M)}:\Gamma_{F(M)} \rightarrow \wild_{n-1}$ is given as follows: the restrictions to the full subquivers $\Gamma_{D_{n-1}(M)}$ and $\Gamma_{D_n(M)}$ are equal to the maps $c_{D_{n-1}(M)}$ and $c_{D_n(M)}$, respectively. The additional $\alpha_1$-colored arrows are then mapped to $\alpha_1$.

 It is useful to think of $\Gamma_{D_{n-1}(M)}$ as a successor-closed full subquiver of $\Gamma_{F(M)}$ and $\Gamma_{D_n(M)}$ a predecessor-closed full subquiver lying beneath it. Note that $\Gamma_{D_{n-1}(M)}$ and $\Gamma_{D_n(M)}$ have the same vertex set, that $\alpha_n$-colored arrows in $\Gamma_M$ become $\alpha_{n-1}$-colored in $\Gamma_{D_{n-1}(M)}$, and that the other arrows in $\Gamma_M$ retain the same colors in $\Gamma_{D_{n-1}(M)}$ and $\Gamma_{D_n(M)}$.
\end{construction}

 \begin{myeg}\label{example: M to F(M)}
Let $M$ be an object of $\Rep(\wild_3,\FF_1)_{\nil}$ with the following associated coefficient quiver:
\[ \Gamma_M =
\begin{tikzcd}[arrow style=tikz,>=stealth,row sep=2em]
\bullet \arrow[d,very thick] \arrow[dr,very thick,blue,dotted] & \bullet \arrow[d,very thick] \arrow[dl,very thick,red, dashed] \\  
\bullet & \bullet
\end{tikzcd}
\]
where $\alpha_1$-colored arrows are black , $\alpha_2$-colored arrows are dotted blue, and $\alpha_3$-colored arrows are dashed red. Then $F(M)$ has the following coefficient quiver:
\[ 
\Gamma_{F(M)} = 
\begin{tikzcd}[arrow style=tikz,>=stealth,row sep=2em]
\bullet \arrow[d,very thick] & \bullet \arrow[d,very thick] \arrow[dl,blue,dotted,very thick] \\  
\bullet \arrow[d,very thick] & \bullet \arrow[d,very thick] \\ 
\bullet \arrow[d,very thick] \arrow[dr,blue,dotted,very thick] & \bullet \arrow[d,very thick]  \\ 
\bullet & \bullet
\end{tikzcd}
\]  
The top four vertices correspond to $D_2(M)$ and the bottom four correspond to $D_3(M)$.
\end{myeg}

Note that in Example \ref{example: M to F(M)}, $F(M)$ fits into a non-split short exact sequence 
\begin{equation} \label{eq: exact seq gluing}
0 \rightarrow D_3(M) \rightarrow F(M) \rightarrow D_2(M) \rightarrow 0.\footnote{One can easily see that this is an exact sequence by considering the corresponding coefficient quivers from Lemma \ref{lem: quotient rep}.}
\end{equation}
This is not a coincidence: by the construction of $\Gamma_{F(M)}$, there is an analogous non-split short exact sequence for \emph{any} $M$. Note that the function $\Gamma_{M} \rightarrow \Gamma_{F(M)}$ preserves connectedness and acyclicity. This means that the function $M\mapsto F(M)$ preserves  indecomposability and nilpotency. \par \medskip

There is a canonical way to partition the $\alpha_{n-1}$-colored arrows of $\Gamma_{F(M)}$ which shall prove useful below. To begin, arrange the vertices of $\Gamma_{F(M)}$ in such a way that all the $\alpha_1$-colored arrows point downwards, and such that the maximal $\alpha_1$-colored paths are arranged left-to-right from longest to shortest. By Construction \ref{construction: GammaFM}, each of these paths has an even number of vertices: suppose that the $i^{th}$ maximal $\alpha_1$-colored path in this embedding contains $2\lambda_i$-vertices. Now take the full subquiver of $\Gamma_{F(M)}$ whose vertex set is formed from the first $\lambda_i$ vertices in the $i^{th}$ maximal path, for each $i$ (starting with the $\alpha_1$-source and moving along the path). Call this full subquiver $\mathcal{U}_{F(M)}$, and call the full subquiver on the remaining vertices $\mathcal{D}_{F(M)}$. Note that $\mathcal{U}_{F(M)} \cong \Gamma_{D_{n-1}(M)}$ and $\mathcal{D}_{F(M)} \cong \Gamma_{D_n(M)}$ by the construction of $\Gamma_{F(M)}$. Hence, all the $\alpha_{n-1}$-colored arrows of $\Gamma_{F(M)}$ lie either in $\mathcal{U}_{F(M)}$ or $\mathcal{D}_{F(M)}$. In fact, the $\alpha_{n-1}$-colored arrows of $\mathcal{U}_{F(M)}$ correspond to the $\alpha_n$-colored arrows of $\Gamma_M$ and the $\alpha_{n-1}$-colored arrows of $\mathcal{D}_{F(M)}$ correspond to the $\alpha_{n-1}$-colored arrows of $\Gamma_M$. Of course, there is a bijection with the $\alpha_i$-colored arrows of $\mathcal{U}_{F(M)}$ (resp. $\mathcal{D}_{F(M)}$) and those of $\Gamma_M$ for each $1 \le i < n-1$. Hence, $M$ can be recovered from $\Gamma_{F(M)}$. 

 Since $\Gamma_M$ is an isomorphism invariant of $M$ (up to coefficient isomorphism), $M \cong N$ implies $\Gamma_{F(M)} \cong \Gamma_{F(N)}$ (again via a coefficient isomorphism) and hence $F(M) \cong F(N)$. Conversely, suppose that $F(M) \cong F(N)$. We claim that this implies $M \cong N$. Indeed, let $M = (M_0,f_1,f_2,\ldots , f_n)$ and $N = (N_0,g_1,g_2,\ldots , f_n)$. An isomorphism $\phi : F(M) \rightarrow F(N)$ induces a coefficient isomorphism $ \Gamma_{F(M)} \rightarrow \Gamma_{F(N)}$ which we also call $\phi$. But then restriction of $\phi$ induces coefficient isomorphisms $\mathcal{U}_{F(M)} \rightarrow \mathcal{U}_{F(N)}$ and $\mathcal{D}_{F(M)}\rightarrow \mathcal{D}_{F(N)}$. The first restriction implies that $\phi$ induces an $\FF_1$-linear map $\psi : M_0 \rightarrow N_0$ such that $g_i = \psi f_i \psi^{-1}$ for $1 \le i < n-1$ and $g_n = \psi f_n \psi^{-1}$. Now note that the second restriction $\mathcal{D}_{F(M)} \rightarrow \mathcal{D}_{F(N)}$ induces the \emph{same} map $M_0 \rightarrow N_0$. Indeed, any maximal $\alpha_1$-colored path in $\mathcal{U}_{F(M)}$ is connected to exactly one maximal $\alpha_1$-colored path in $\mathcal{D}_{F(M)}$ by an $\alpha_1$-colored arrow in $\Gamma_{F(M)}$. Recall the canonical identifications 
\[  
\mathcal{U}_{F(M)} \cong \Gamma_{D_{n-1}(M)},\quad \mathcal{D}_{F(M)} \cong \Gamma_{D_n(M)} 
\]  
described above, along with the fact that each of these coefficient quivers has the same vertex set as $\Gamma_M$. Under these identifications, the vertices in the maximal $\alpha_1$-colored path of $\mathcal{U}_{F(M)}$ and those in the corresponding maximal $\alpha_1$-colored path of $\mathcal{D}_{F(M)}$ get sent to the same vertices of $\Gamma_M$. Hence, the global map $\phi$ restricts to the same vertex map on $\mathcal{U}_{F(M)}$ and $\mathcal{D}_{F(M)}$, as we claimed. It now follows from the second restriction that $\psi$ must also satisfy $g_{n-1} = \psi f_{n-1} \psi^{-1}$. Finally, this implies $M \cong N$, as we wished to show. In terms of growth functions of indecomposables of quivers (Definition \ref{definition: indecomposables growth}), this implies the following theorem:

\begin{mytheorem}\label{theorem: n to n-1}  
The following inequalities hold for all natural numbers $d$ and $n\geq 2$: 
\[
\NI_{\wild_{n-1}}(d) \le \NI_{\wild_n}(d) \le \NI_{\wild_{n-1}}(2d). 
\]
\end{mytheorem}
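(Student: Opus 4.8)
The plan is to deduce both inequalities by constructing, for each one, an injection between the relevant sets of isomorphism classes of indecomposables, so that the entire theorem reduces to combinatorial bookkeeping on colored quivers. The genuinely difficult properties needed for the upper bound have already been established in the discussion preceding the statement, so the work left is essentially to package those facts together with two elementary observations.

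For the lower bound $\NI_{\wild_{n-1}}(d) \le \NI_{\wild_n}(d)$, I would use the simplest possible construction: send $M = (M_0, f_1, \ldots, f_{n-1})$ in $\Rep(\wild_{n-1},\FF_1)_{\nil}$ to $\widetilde{M} = (M_0, f_1, \ldots, f_{n-1}, 0)$ in $\Rep(\wild_n,\FF_1)_{\nil}$, i.e.\ adjoin a zero map as the $n$-th loop (this is manifestly still nilpotent). Since the new arrow is the zero map it contributes no arrows to the colored quiver, so $\Gamma_{\widetilde M}$ has exactly the same underlying uncolored quiver as $\Gamma_M$; in particular $\Gamma_{\widetilde M}$ is connected if and only if $\Gamma_M$ is, and by Lemma \ref{l.basicprop}(6) the assignment preserves indecomposability. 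It preserves dimension because $\dim(\widetilde M) = \dim(M_0) = \dim(M)$, and it is injective on isomorphism classes: an isomorphism $\widetilde M \cong \widetilde N$ is an $\FF_1$-linear isomorphism $\varphi$ intertwining all $n$ pairs of loops, hence in particular the first $n-1$, so it already witnesses $M \cong N$. Thus $M \mapsto \widetilde M$ is an injection from $d$-dimensional indecomposables of $\wild_{n-1}$ to those of $\wild_n$.

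For the upper bound $\NI_{\wild_n}(d) \le \NI_{\wild_{n-1}}(2d)$, I would invoke the construction $M \mapsto F(M)$ of Construction \ref{construction: GammaFM}. The discussion preceding the theorem already supplies the two hard properties: $F$ preserves indecomposability (connectedness of $\Gamma_M$ forces connectedness of $\Gamma_{F(M)}$), and $F$ is injective on isomorphism classes, since $F(M) \cong F(N) \iff M \cong N$. The only remaining ingredient is the dimension count. By construction the vertex set of $\Gamma_{F(M)}$ is the disjoint union of the vertex sets of $\Gamma_{D_{n-1}(M)}$ and $\Gamma_{D_n(M)}$, each of which coincides with the vertex set of $\Gamma_M$; and since $\wild_n$ has a single vertex, $\dim(M)$ equals the number of vertices of $\Gamma_M$. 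Hence $\dim(F(M)) = 2\dim(M)$, so $F$ restricts to an injection from $d$-dimensional indecomposables of $\wild_n$ into $2d$-dimensional indecomposables of $\wild_{n-1}$, which yields the bound.

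The technically demanding part of the whole argument — verifying that $F$ is well-defined up to isomorphism and injective on isomorphism classes, which hinges on showing that the restrictions of an isomorphism $F(M) \cong F(N)$ to $\mathcal{U}_{F(M)}$ and $\mathcal{D}_{F(M)}$ induce the \emph{same} underlying $\FF_1$-linear map $M_0 \to N_0$ — is exactly the content worked out in the paragraphs above. Consequently I expect no serious obstacle here: the only genuinely new steps are the elementary vertex count giving $\dim(F(M)) = 2\dim(M)$ and the routine verification that the zero-map embedding behaves as claimed. The one point deserving care is tracking how the colorings of $\Gamma_M$ redistribute among $\mathcal{U}_{F(M)}$ and $\mathcal{D}_{F(M)}$ when confirming the dimension doubling, but this is precisely what the partition of the $(n-1)$-colored arrows described before the theorem makes transparent.
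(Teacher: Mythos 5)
Your proposal is correct and follows essentially the same route as the paper: the upper bound via Construction \ref{construction: GammaFM} together with $\dim F(M) = 2\dim(M)$ and the injectivity of $M \mapsto F(M)$ on isomorphism classes established in the preceding discussion, and the lower bound via the inclusion of $\wild_{n-1}$ as a subquiver of $\wild_n$ (which the paper states as $\Rep(\wild_{n-1},\FF_1)_{\nil}$ being a full subcategory, and which you make explicit by adjoining the zero map as the $n$-th loop). Your extra verifications for the lower bound are correct but amount to unwinding the same observation.
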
  
\begin{proof}  
We have shown that the assignment $M \mapsto F(M)$ induces an injective map from the isomorphism classes of indecomposable $\wild_n$-representations to the isomorphism classes of indecomposable $\wild_{n-1}$-representations. Furthermore, $\dim F(M) = 2\cdot \dim (M)$ for every such $M$. This shows that
\[
\NI_{\wild_n}(d) \le \NI_{\wild_{n-1}}(2d), \quad \forall d \in \mathbb{N}. 
\]
The other inequality $\NI_{\wild_{n-1}}(d) \le \NI_{\wild_n}(d)$ follows from the fact that $\wild_{n-1}$ is a subquiver of $\wild_n$, so that $\Rep(\wild_{n-1},\FF_1)_{\nil}$ is a full subcategory of $\Rep(\wild_n,\FF_1)_{\nil}$.
\end{proof}

\begin{cor}\label{corollary: L_m and L_n}
Let $2 \le m < n$ be natural numbers. Then for all $d \in \NN$, 
\[ 
\NI_{\wild_m}(d) \le \NI_{\wild_n}(d) \le \NI_{\wild_m}(2^{n-m}d). 
\]
%For any two natural numbers $m$ and $n$, there is a faithful functor $F: \Rep(\wild_m,\FF_1)_{\nil} \rightarrow \Rep(\wild_n,\FF_1)_{\nil}$ which preserves indecomposability, such that $\dim_{\FF_1}(F(V))/\dim_{\FF_1}(V)$ is a positive constant for all non-zero representations $V$.
\end{cor}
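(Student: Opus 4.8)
The plan is to derive Corollary~\ref{corollary: L_m and L_n} directly from Theorem~\ref{theorem: n to n-1} by a straightforward induction on $n - m$, since the corollary is just the iterated version of the single-step inequalities already established. The left-hand inequality $\NI_{\wild_m}(d) \le \NI_{\wild_n}(d)$ requires essentially no work: Theorem~\ref{theorem: n to n-1} gives $\NI_{\wild_{k-1}}(d) \le \NI_{\wild_k}(d)$ for each $k$ with $m < k \le n$, and chaining these together yields
\[
\NI_{\wild_m}(d) \le \NI_{\wild_{m+1}}(d) \le \cdots \le \NI_{\wild_n}(d)
\]
for every fixed $d$, which is the desired lower bound.

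For the right-hand inequality I would argue by induction on the difference $r = n - m \ge 1$. The base case $r = 1$ is exactly the statement $\NI_{\wild_n}(d) \le \NI_{\wild_{n-1}}(2d)$ from Theorem~\ref{theorem: n to n-1}, which matches $\NI_{\wild_m}(2^{1}d)$. For the inductive step, suppose the bound holds for the difference $r - 1$, i.e.\ that $\NI_{\wild_{n}}(d) \le \NI_{\wild_{m+1}}(2^{n-m-1}d)$ for all $d$. Applying Theorem~\ref{theorem: n to n-1} one more time with the quiver $\wild_{m+1}$ (so that $\NI_{\wild_{m+1}}(e) \le \NI_{\wild_m}(2e)$ for all $e$) and substituting $e = 2^{n-m-1}d$ gives
\[
\NI_{\wild_n}(d) \le \NI_{\wild_{m+1}}(2^{n-m-1}d) \le \NI_{\wild_m}\!\left(2 \cdot 2^{n-m-1}d\right) = \NI_{\wild_m}(2^{n-m}d),
\]
completing the induction. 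The key point enabling this is that the single-step upper bound holds for every argument $d \in \NN$ and every $n \ge 2$, so I am free to feed in the scaled argument $2^{n-m-1}d$ at each stage.

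There is no genuine obstacle here, as the result is a formal consequence of Theorem~\ref{theorem: n to n-1}; the only thing to be careful about is bookkeeping the powers of $2$ so that the exponents accumulate correctly to $2^{n-m}$, and ensuring that the single-step inequality is invoked at the correct intermediate quiver $\wild_{k}$ at each stage of the chain. Since the inequalities in Theorem~\ref{theorem: n to n-1} are uniform in $d$, the substitution of doubled arguments is legitimate at every step, and both the lower and upper bounds follow by telescoping the respective chains of single-step inequalities.
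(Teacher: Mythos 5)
Your proposal is correct and is exactly the paper's argument: the paper's proof consists of the single line ``This follows from repeated application of Theorem \ref{theorem: n to n-1},'' and your induction on $r = n-m$ (chaining the lower bounds, and feeding the scaled argument $2^{n-m-1}d$ into the single-step upper bound at each stage) is precisely the repeated application being invoked. No differences to report.
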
 

\begin{proof} 
This follows from repeated application of Theorem \ref{theorem: n to n-1}.
\end{proof}   

Corollary \ref{corollary: L_m and L_n} suggests an order relation on quivers based on the growth of their representation functions. For any natural number $C$, let $\mu_C : \NN \rightarrow \NN$ denote multiplication by $C$. For two quivers $Q$ and $Q'$, we write $Q \le_{\nil} Q'$ if there exists a natural number $C$ such that $\NI_Q = O(\NI_{Q'} \circ \mu_C)$ in big-$O$ notation. More explicitly, $Q \le_{\nil}Q'$ if and only if there exists a positive real number $D$ and a natural number $C$ such that  
\[
\NI_Q(n) \le D\NI_{Q'}(Cn), \text{ for all sufficiently large $n$.}
\]
It is straightforward to check that $\le_{\nil}$ is a reflexive and transitive relation. If we define $Q \approx_{\nil} Q'$ if and only if $Q\le_{\nil}Q'$ and $Q'\le_{\nil}Q$, then $\approx_{\nil}$ becomes an equivalence relation on quivers.

\begin{myeg} 
The following statements demonstrate the utility of the notion of $\approx_{\nil}$: 
\begin{enumerate} 
\item It will follow from Theorem \ref{theorem: finite type} that $Q$ has finitely many indecomposables (up to isomorphism) if and only if $Q\approx_{\nil} T$, where $T$ is any tree quiver.
\item If $C_n$ is an equioriented $\tilde{\mathbb{A}}_n$ quiver, then it follows from  \cite[Section 11]{szczesny2011representations} that $\wild_1 \approx_{\nil} C_n$ for all $n \ge 2$.
\item For all $n \geq 2$, $\wild_1 \not\approx_{\nil} \wild_n$. Indeed, $\NI_{\wild_1} \circ \mu_C \in O(1)$ for all $C$, but $\limsup_m\NI_{\wild_n}(m) = \infty$ for $n\geq 2$.
\item By Corollary \ref{corollary: L_m and L_n}, $\wild_m \approx_{\nil} \wild_n$ whenever $m$ and $n$ are both at least $2$.
\end{enumerate}
\end{myeg}  

\noindent The result below demonstrates that $\wild_2$ is an ``upper bound'' for quivers with respect to the $\le_{\nil}$ relation.

\begin{mytheorem}\label{theorem: upper bound}
Let $Q$ be a quiver. Then $Q \le_{\nil} \wild_2$.
\end{mytheorem}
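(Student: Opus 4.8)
The plan is to reduce the statement to the comparisons already available among the $n$-loop quivers. Since Corollary \ref{corollary: L_m and L_n} yields $\wild_N \le_{\nil} \wild_2$ for every $N \ge 2$, and $\le_{\nil}$ is transitive, it suffices to produce a single natural number $N$ with $Q \le_{\nil} \wild_N$. I would take $N = |Q_0| + |Q_1|$ and construct, via colored quivers, an explicit assignment $\mathbb{V} \mapsto W$ from $\Rep(Q,\FF_1)_{\nil}$ to $\Rep(\wild_N,\FF_1)_{\nil}$ that is injective on isomorphism classes, preserves indecomposability, and exactly doubles dimension. This gives $\NI_Q(n) \le \NI_{\wild_N}(2n)$, hence $Q \le_{\nil} \wild_N \le_{\nil} \wild_2$.

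For the construction, given $\mathbb{V}$ with colored quiver $\Gamma_{\mathbb{V}}$ (vertices colored by $Q_0 = \{v_1,\dots,v_p\}$, arrows by $Q_1 = \{\alpha_1,\dots,\alpha_r\}$), I would build a new colored quiver $\Gamma'$ as follows. Take the single-vertex-color convention and arrow-color set $\{1,\dots,r\} \sqcup \{c_1,\dots,c_p\}$. Keep all of $\Gamma_{\mathbb{V}}$, recoloring each $\alpha_i$-arrow as $i$; then, for every vertex $x$ of $\Gamma_{\mathbb{V}}$ colored $v_j$, adjoin a fresh marker vertex $m_x$ and a single arrow $x \xrightarrow{c_j} m_x$. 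One checks that $\Gamma'$ satisfies conditions (1)--(3) of Lemma \ref{l.basicprop}: acyclicity is preserved because the $m_x$ are pendant sinks; for $i \le r$, every connected $i$-monochromatic subquiver of $\Gamma'$ is already one of $\Gamma_{\mathbb{V}}$ and hence an oriented path, while each $c_j$-monochromatic component is a single edge $x \to m_x$ (distinct sources, distinct fresh targets). By Proposition \ref{proposition: colored quiver}, $\Gamma'$ is the colored quiver of a well-defined nilpotent $\wild_N$-representation $W$. Since exactly one marker is added per vertex, $\dim W = 2\dim\mathbb{V}$; and since adding pendants neither connects nor disconnects, $\Gamma'$ is connected iff $\Gamma_{\mathbb{V}}$ is, so by Lemma \ref{l.basicprop}(6) the map preserves indecomposability.

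For injectivity on isomorphism classes, I would exhibit an intrinsic recovery of $\mathbb{V}$ from $\Gamma'$: the marker vertices are exactly the targets of the $c_j$-colored arrows (they are never sources of any arrow, and original vertices are never targets of a $c_j$-arrow), the original vertices are the remaining ones, the color $c_j$ of the unique marker-colored arrow leaving an original vertex recovers its $Q_0$-color $v_j$, and the arrows colored $1,\dots,r$ recover the $\alpha_i$. Because this recovery is canonical, any chromatic isomorphism $\Gamma_W \cong \Gamma_{W'}$ restricts to a chromatic isomorphism $\Gamma_{\mathbb{V}} \cong \Gamma_{\mathbb{V}'}$, so $W \cong W'$ forces $\mathbb{V} \cong \mathbb{V}'$. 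Thus $\mathbb{V}\mapsto W$ injects the $n$-dimensional indecomposables of $Q$ into the $2n$-dimensional indecomposables of $\wild_N$, giving $\NI_Q(n) \le \NI_{\wild_N}(2n)$. I would then dispose of the degenerate cases $N \le 1$ (where $Q$ is empty or $\wild_0$, so $\NI_Q(n) = 0$ for $n \ge 2$ and $Q \le_{\nil} \wild_2$ trivially) and conclude via transitivity together with $\wild_N \le_{\nil} \wild_2$ from Corollary \ref{corollary: L_m and L_n}.

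The main obstacle, and the reason a naive approach fails, is that the encoding must remain inside the nilpotent category: one cannot record vertex-membership by idempotent ``projection'' endomorphisms, since those are never nilpotent, nor can one attach arbitrary auxiliary structure, since the colored quiver of a nilpotent representation must be acyclic with monochromatic components that are paths. The marker gadget circumvents this by recording each vertex's $Q_0$-color with a single square-zero partial injection to a fresh sink, which simultaneously respects acyclicity and the monochromatic-path condition. The freedom that makes this painless is that the target may have arbitrarily many loops: because $\wild_N \approx_{\nil} \wild_2$ for all $N \ge 2$, there is no need to compress the $p + r$ colors into two, so the gadget stays simple and the recovery argument stays transparent.
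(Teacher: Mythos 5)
Your proof is correct and is essentially the paper's own argument: the paper likewise attaches one fresh pendant vertex per vertex of $\Gamma_{\mathbb{V}}$ via a $Q_0$-colored arrow to encode the vertex color, obtains a dimension-doubling, indecomposability-preserving, isomorphism-reflecting assignment into $\Rep(\wild_{|Q_0|+|Q_1|},\FF_1)_{\nil}$, and concludes via $\wild_N \approx_{\nil} \wild_2$. The only (immaterial) difference is the orientation of the marker arrow — the paper makes the fresh leaf a source pointing into $x$, whereas you make it a sink receiving an arrow from $x$.
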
 
\begin{proof} 
In light of Corollary \ref{corollary: L_m and L_n}, it suffices to show that $Q \le_{\nil} \wild_n$ for some finite $n \geq 1$. First set $n = |Q_0| + |Q_1|$, and put a total order on the set $Q_0\sqcup Q_1$ such that each vertex is less than each arrow. The total order allows us to identify $Q_0\sqcup Q_1$ with the arrow set of $\wild_n$. Now let $M$ be an indecomposable nilpotent representation of $Q$, and send $M \mapsto \Gamma_M$. We construct a new $\FF_1$-representation of $\wild_n$ called $G(M)$ by specifying its coefficient quiver. The quiver $\Gamma_{G(M)}$ is obtained by adding the following vertices and arrows to $\Gamma_M$: to each $v$-colored vertex $x$ of $\Gamma_M$, add a new vertex $s_x$ and a single $v$-colored arrow $\alpha_x$ such that $t(\alpha_x) = x$ and $s(\alpha_x) = s_x$. Note that the arrows of the resulting quiver $\Gamma_{G(M)}$ are colored by the elements of $Q_0 \sqcup Q_1$. Now recolor each vertex in $\Gamma_{G(M)}$ with the unique vertex of $\wild_n$. The map $c_{G(M)} : \Gamma_{G(M)} \rightarrow \wild_n$ is given by mapping each vertex and arrow to its associated color.  

 By construction, $\dim_{\FF_1}G(M)  = 2\dim_{\FF_1}(M)$. Furthermore, $G(M)$ is indecomposable. To see this, recall that $\Gamma_M$ is connected since $M$ is indecomposable. But $\Gamma_{G(M)}$ is obtained from $\Gamma_M$ by adding new arrows, and hence is connected as well. By Lemma \ref{l.basicprop}(5), $G(M)$ is indecomposable. Finally,  $G(M) \cong G(N)$ if and only if $M \cong N$. Indeed, any coefficient isomorphism $G(M)\rightarrow G(N)$ must map the $Q_0$-colored arrows of $G(M)$ to the $Q_0$-colored arrows of $G(N)$. Furthermore, the source of each $Q_0$-colored arrow has indegree $0$ and outdegree $1$. Hence, any such coefficient isomorphism must restrict to a coefficient isomorphism between the subquivers of $G(M)$ and $G(N)$ obtained by deleting the sources of the $Q_0$-colored arrows. The claim now follows from the simple observation that $\Gamma_M$ may be recovered from $\Gamma_{G(M)}$ by deleting the sources of $Q_0$-colored arrows. The assignment $M \mapsto G(M)$ implies the inequality $\NI_Q(m) \le \NI_{\wild_n}(2m)$ for all $m \geq 1$, which in turn implies $Q \le_{\nil} \wild_n$.
\end{proof}   

\begin{myeg} 
We illustrate the construction of Theorem \ref{theorem: upper bound} with an example. Let $Q$ be the Kronecker quiver, with vertex set $\{1,2\}$ and two arrows $1\xrightarrow[]{\alpha} 2$ and $1\xrightarrow[]{\beta} 2$. Let $M$ be the indecomposable representation with coefficient quiver
\[ \Gamma_M= 
\begin{tikzcd}[arrow style=tikz,>=stealth,row sep=2em] 
1 \arrow[r,"\alpha"] & 2 &  \arrow[l,swap,"\beta"]  1\\ 
\end{tikzcd}. 
\] 
Then $G(M)$ is the indecomposable $\wild_4$-representation with coefficient quiver 
\[ 
\Gamma_{G(M)} = 
\begin{tikzcd}[arrow style=tikz,>=stealth,row sep=2em]
\bullet \arrow[r,"\alpha"] & \bullet & \arrow[l,swap,"\beta"] \bullet \\  
\arrow[u, "1"] \bullet & \arrow[u,"2"] \bullet & \arrow[u,"1"] \bullet \\
\end{tikzcd}.
\]
\end{myeg} 

\begin{rmk} 
For an algebraically closed field $k$ and a wild quiver $Q$, the category of $k$-representations of $Q$ admits a fully-faithful embedding into the category of $k$-representations of $\wild_2$. It is not currently known whether a similar statement holds over $\FF_1$.
\end{rmk}

%%%%%%%%%%%%%%%%(FINITE TYPE)%%%%%%%%%%%%%%%%%% 

\section{Growth of indecomposable quiver representations over $\FF_1$} \label{section: growh of indecomposable quiver reps}

In this section, we study quivers with respect to the $\approx_{\nil}$ relation. We characterize quivers of finite and bounded representation type, showing that these are precisely the tree and quivers of type $\tilde{\mathbb{A}}_n$. We then characterize unbounded quivers in terms of certain categorical embeddings. %We then show how bounded representation type partitions connected quivers in a way similar to the ``tame-wild dichotomy'' in finite-dimensional algebras.

\subsection{Quivers of finite representation type over $\mathbb{F}_1$}

Recall that $Q$ has finite representation type over $\mathbb{F}_1$ if and only if $\Rep(Q,\FF_1)_{\nil}$ has finitely many isomorphism classes of indecomposables. In terms of the $\le_{\nil}$ relation, $Q$ is of finite type if and only if $Q \approx_{\nil} \wild_0$, the quiver with one vertex and zero arrows. In \cite{szczesny2011representations}, Szczesny proves that trees are of finite representation type over $\FF_1$. In this section, we show that the converse holds for connected quivers. To begin, consider any subquiver $S$ of $Q$. Then we have a functor
\[
\mathcal{F}: \Rep (S,\FF_1) \rightarrow \Rep (Q,\FF_1)
\]
which takes an $S$-representation $M$ to the $Q$-representation $\mathcal{F}(M)$ which restricts to $M$ on $S$, and which satisfies $\mathcal{F}(M)_v = 0$ whenever $v \not\in S_0$ and $\mathcal{F}(M)_{\alpha} = 0$ whenever $\alpha \not\in S_1$. If $M$ is nilpotent then so is $\mathcal{F}(M)$, so the functor $\mathcal{F}$ restricts to a functor $\Rep (S,\FF_1)_{\nil} \rightarrow \Rep(Q,\FF_1)_{\nil}$. In the other direction, there is a restriction functor  
\[
\Res_S : \Rep(Q,\FF_1) \rightarrow \Rep(S,\FF_1) 
\]
 which restricts to a functor $\Rep(Q,\FF_1)_{\nil} \rightarrow \Rep(S,\FF_1)_{\nil}$. %THESE PROBABLY FORM AN ADJOINT PAIR}

\begin{lem}\label{lemma: l.subq}
Let $S$ be a subquiver of $Q$. Then the functor $\mathcal{F}: \Rep (S,\FF_1) \to \Rep (Q,\FF_1)$ is full and preserves indecomposability of objects. An equivalent statement holds for nilpotent representations.
\end{lem}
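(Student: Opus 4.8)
The plan is to prove both assertions directly from the description of $\mathcal{F}$ as extension by zero, treating fullness and preservation of indecomposability separately, and then to observe that every step restricts verbatim to the nilpotent subcategories. Throughout, the governing principle is that the data attached by $\mathcal{F}(M)$ to vertices and arrows outside $S$ is zero, so these components contribute nothing.

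For fullness, I would fix $S$-representations $M$ and $N$ and analyze an arbitrary morphism $\Phi = (\phi_v)_{v \in Q_0} : \mathcal{F}(M) \to \mathcal{F}(N)$. The key observation is that $\Phi$ is forced to be trivial outside $S$: for each $v \notin S_0$ we have $\mathcal{F}(M)_v = \mathcal{F}(N)_v = 0$, so $\phi_v$ is the unique zero map; and for each arrow $\alpha \notin S_1$ both structure maps $\mathcal{F}(M)_\alpha$ and $\mathcal{F}(N)_\alpha$ vanish, so the corresponding commutativity square commutes automatically. Hence $\Phi$ is determined by the family $(\phi_v)_{v \in S_0}$, and the only nontrivial constraints are those indexed by $\alpha \in S_1$ — precisely the condition for $(\phi_v)_{v \in S_0}$ to define a morphism $M \to N$ in $\Rep(S,\FF_1)$. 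Applying $\mathcal{F}$ to this morphism recovers $\Phi$, so $\Hom_S(M,N) \to \Hom_Q(\mathcal{F}M, \mathcal{F}N)$ is a bijection and $\mathcal{F}$ is full (indeed fully faithful).

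For preservation of indecomposability, suppose $M$ is indecomposable and, for contradiction, that $\mathcal{F}(M) \cong \mathbb{A} \oplus \mathbb{B}$ nontrivially. By the component-wise behavior of direct sums recalled in the proof of Lemma \ref{l.basicprop}(6), this yields splittings $\mathcal{F}(M)_v = A_v \oplus B_v$ for each $v \in Q_0$, compatible with every structure map. Since $\mathcal{F}(M)_v = 0$ for $v \notin S_0$, the splitting is concentrated on $S_0$, and since every arrow of $S$ has both endpoints in $S_0$, the compatibility condition for $\alpha \in S_1$ reads $f_\alpha(A_{\alpha_s}) \subseteq A_{\alpha_t}$ and $f_\alpha(B_{\alpha_s}) \subseteq B_{\alpha_t}$. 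This is exactly a decomposition $M = \mathbb{A}' \oplus \mathbb{B}'$ in $\Rep(S,\FF_1)$ with $\mathcal{F}(\mathbb{A}') = \mathbb{A}$ and $\mathcal{F}(\mathbb{B}') = \mathbb{B}$. Because a summand of $\mathcal{F}(M)$ is nonzero if and only if it is nonzero on some vertex of $S_0$, the decomposition of $M$ is again nontrivial, contradicting indecomposability.

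Finally, since $\mathcal{F}$ sends nilpotent representations to nilpotent ones (as noted before the statement), both arguments apply verbatim to the restricted functor $\Rep(S,\FF_1)_{\nil} \to \Rep(Q,\FF_1)_{\nil}$; alternatively, in the nilpotent case one may deduce preservation of indecomposability from the evident identity $\Gamma_{\mathcal{F}(M)} = \Gamma_M$ of colored quivers together with Lemma \ref{l.basicprop}(6). I do not expect a serious obstacle here: the only point requiring care is the bookkeeping of which vertices and arrows survive under $\mathcal{F}$, and in particular the verification that the vanishing of structure maps outside $S$ makes the extra commutativity squares automatic and forces the extra summand components to be zero.
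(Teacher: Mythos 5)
Your proposal is correct and follows essentially the same route as the paper: fullness by observing that a morphism $\mathcal{F}(M)\to\mathcal{F}(N)$ is forced to vanish outside $S_0$ and that the commutativity squares at arrows outside $S_1$ are automatic, and preservation of indecomposability by pulling a nontrivial decomposition of $\mathcal{F}(M)$ back to a nontrivial decomposition of $M$ supported on $S_0$. The paper's version is terser but identical in substance, and your extra observations (full faithfulness, the colored-quiver alternative in the nilpotent case) are correct but not needed.
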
 
\begin{proof} 
Let $M,N \in \Rep (S,\FF_1)$ and $\phi: \mathcal{F}(M) \rightarrow \mathcal{F}(N)$ be a morphism in $\Rep(Q,\FF_1)$. Define a morphism $\tilde{\phi}: M\rightarrow N$ in $\Rep(S,\FF_1)$ as follows: $\tilde{\phi}_u = \phi_u$ if $u \in S_0$ and $\tilde{\phi}_u = 0$ otherwise. One readily checks that $\tilde{\phi}$ is a morphism in $\Rep(S,\FF_1)$, so it only remains to check that $\mathcal{F}(\tilde{\phi}) = \phi$. It is clear that $\mathcal{F}(\tilde{\phi})_v = \phi_v$ if $v \in S_0$, so assume that $v$ is a vertex of $Q$ but not of $S$. Then $\mathcal{F}(M)_v = \mathcal{F}(N)_v = 0$ and so $\phi_v = 0$, from which it follows that $\mathcal{F}(\tilde{\phi})_v = \phi_v$ for all $v \in Q_0$. It is now clear that $\mathcal{F}$ preserves indecomposability: if $\mathcal{F}(M) \cong N_1 \oplus N_2$ in $\Rep(Q,\FF_1)$, then $N_1$ and $N_2$ are also $S$-representations and we can find a decomposition $M = M_1 \oplus M_2$ in $\Rep(S,\FF_1)$ which preserves the dimension vectors of $N_1$ and $N_2$. The proof for nilpotent representations is identical. 
\end{proof} 

\begin{lem}\label{lemma: l.cyc}
Let $Q$ be a quiver of type $\tilde{\mathbb{A}}_m$. Then $Q$ is not of finite type over $\FF_1$.
\end{lem}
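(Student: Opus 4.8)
The plan is to show that $\Rep(Q,\FF_1)_{\nil}$ has infinitely many isomorphism classes of indecomposables by producing, for each positive integer $k$, an indecomposable nilpotent representation of dimension $k$. Rather than build representations by hand, I would work entirely at the level of colored quivers and invoke Proposition \ref{proposition: colored quiver}: it suffices to exhibit arbitrarily large \emph{connected} colored quivers with vertex colors $Q_0$, arrow colors $Q_1$, satisfying conditions (1)--(3) of Lemma \ref{l.basicprop}. Each such $\Gamma$ equals $\Gamma_\mathbb{V}$ for some $\mathbb{V} \in \Rep(Q,\FF_1)_{\nil}$, unique up to isomorphism, and $\mathbb{V}$ is indecomposable exactly when $\Gamma$ is connected by Lemma \ref{l.basicprop}(6). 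Since a colored quiver $\Gamma_\mathbb{V}$ has exactly $\dim(\mathbb{V})$ vertices, producing connected admissible $\Gamma$'s with unbounded vertex count will finish the proof.

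The colored quivers I would use are the finite \emph{unrolled cycles}, i.e.\ finite segments of the universal cover of $Q$. Writing $\overline{Q}$ as a cycle on $n = |Q_0|$ vertices and fixing the orientation of its arrows, for each $k$ I would take an oriented path $P$ on $k$ vertices (an $A$-type quiver whose underlying graph is a path) and color its vertices and arrows periodically, so that reading along $P$ reproduces the vertices and arrows of $Q$ traversed repeatedly around the cycle in the given orientation. Concretely, the color of each vertex (resp.\ arrow) is its image under the covering map $P \to Q$, and this covering map is precisely the quiver map demanded by condition (1). As $P$ is a path it is connected and acyclic, giving condition (2) and connectedness.

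The one point that requires genuine verification is condition (3): every connected arrow-monochromatic subquiver of $P$ must be an oriented path. This is where the cyclic structure enters. For a fixed arrow color $\alpha \in Q_1$, the $\alpha$-colored arrows of $P$ are the lifts of $\alpha$, and consecutive lifts are separated by a full winding of $n$ edges. Hence when $n \geq 2$ no two $\alpha$-colored arrows share a vertex, so each connected $\alpha$-monochromatic subquiver is a single arrow (a path of length $1$) or an isolated vertex, and (3) holds. The only degenerate case is the one-vertex, one-loop cycle $\wild_1$, where there is a single arrow color occurring on \emph{every} edge of $P$; then $P$ itself is arrow-monochromatic, but it is an oriented path, so (3) still holds. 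I expect this check---and in particular keeping the degenerate loop case straight so that the single-color, fully monochromatic path does not violate (3)---to be the only real subtlety, since everything else is automatic from $P$ being a path.

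Finally I would let $k \to \infty$. The representation $\mathbb{V}^{(k)}$ associated to the unrolled path on $k$ vertices is an indecomposable object of $\Rep(Q,\FF_1)_{\nil}$ with $\dim(\mathbb{V}^{(k)}) = k$; since these dimensions are pairwise distinct, the $\mathbb{V}^{(k)}$ are pairwise non-isomorphic. Thus $\NI_Q(k) \geq 1$ for every $k$, so $\Rep(Q,\FF_1)_{\nil}$ has infinitely many indecomposables and $Q$ is not of finite type over $\FF_1$. (The argument is uniform in the orientation of the cycle, since the orientation only dictates the directions of the arrows in the lift $P$, and acyclicity of $P$ is automatic.)
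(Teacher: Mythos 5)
Your proof is correct, and it reaches the conclusion by a somewhat different route than the paper. The paper's proof of this lemma is hands-on: it defines $M(n)$ with $M(n)_i = [n]$ at every vertex, the shift $k \mapsto k-1$ on one arrow and the identity on all others, and then proves indecomposability directly by analyzing an arbitrary decomposition $M(n) \cong X \oplus Y$ (commutativity of the squares forces the partitions $A_i \sqcup B_i$ of $[n]$ to agree across vertices, and invariance under the shift then forces one summand to vanish). You instead construct the colored quiver first --- a finite segment of the universal cover of the cycle --- and outsource both existence/nilpotence and indecomposability to Proposition \ref{proposition: colored quiver} and Lemma \ref{l.basicprop}(6), so that indecomposability reduces to the visible connectedness of a path. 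The objects are essentially the same (the colored quiver of the paper's $M(n)$ \emph{is} the unrolled path on $|Q_0|\cdot n$ vertices), and your verification of condition (3), including the winding-separation argument for $|Q_0|\ge 2$ and the degenerate single-loop case, is the correct replacement for the paper's explicit decomposition argument. Two small dividends of your version: you get an indecomposable in \emph{every} dimension $k$, not just multiples of $|Q_0|$, so $\NI_Q(k)\ge 1$ for all $k$; and your unrolled paths are exactly the representations $I_{[n,i]}$ that the paper introduces separately in the following subsection to classify all indecomposables over a cycle quiver, so your argument anticipates that construction.
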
 

\begin{proof} 
Let $m = |Q_0|$, and order the vertices of $Q$ as $1$, \ldots , $m$. Up to a reordering of the vertices, we may assume that the arrow $\alpha_1$ between $1$ and $2$ starts at $1$. Consider the family of representations $M(n)$, where $n\geq 1$ is a natural number: $M(n)_i = [n]$ for $i = 1,\ldots , m$; $M(n)_{\alpha_1}(k)=k-1$; and $M(n)_{\alpha_i} = \id_{[n]}$ for $i > 1$. We show that the $M(n)$'s are an infinite set of pairwise non-isomorphic indecomposable representations. Indeed, suppose that we had an isomorphism $\phi: M(n) \rightarrow X\oplus Y$. Write $\chi_i\oplus \psi_i$ for the map between $X_i\oplus Y_i$ and $X_{i+1}\oplus Y_{i+1}$ (where $i$ is read mod $m$). For each $1 \le i \le m$, let $A_i = \phi_i^{-1}(X_i)$ and $B_i = \phi_i^{-1}(Y_i)$. Then $[n] = A_i \sqcup B_i$ for each $i$. For $i > 1$, the commutativity of the square at $\alpha_i$ implies that $A_1 = A_2 = \ldots = A_m$ and $B_1 = B_2 = \ldots = B_m$. Hence, we write $A = A_1$ and $B = B_1$. The commutativity of the square at $\alpha_1$ implies that $M(n)_{\alpha_1}$ leaves $A$ and $B$ invariant. If $n \in A$, this immediately implies that $X = M(n)$ and $Y = 0$. In other words, $M(n)$ is indecomposable, from which the claim follows.
\end{proof} 

\begin{mytheorem} \label{theorem: finite type}
Let $Q$ be a quiver. Then $Q$ is of finite type over $\FF_1$ if and only if $Q$ is a tree.
\end{mytheorem}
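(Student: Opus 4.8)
The plan is to prove the two implications separately, leaning on the two lemmas just established together with Szczesny's theorem that trees are of finite type.

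For the ``if'' direction I would simply invoke Szczesny's result \cite{szczesny2011representations}: every tree is of finite type over $\FF_1$. No further work is required. Throughout I take ``tree'' to mean $\overline{Q}$ is connected and acyclic, so for the converse it is natural to assume $Q$ is connected; the content of that direction is really the statement that finite type forces $\overline{Q}$ to be acyclic.

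For the ``only if'' direction I would argue by contrapositive: assuming $Q$ is connected and \emph{not} a tree, I will exhibit infinitely many pairwise non-isomorphic indecomposables in $\Rep(Q,\FF_1)_{\nil}$. Since $\overline{Q}$ is connected but not a tree, it contains a cycle; taking the vertices and arrows traversed by this cycle yields a subquiver $S \subseteq Q$ whose underlying graph is a cycle. If this cycle is a single loop then $S \cong \wild_1$, the Jordan quiver, which is well known (and used above) to have infinitely many nilpotent indecomposables; otherwise $S$ is a cycle quiver on $m = |S_0| \ge 2$ vertices, and Lemma \ref{lemma: l.cyc} supplies an infinite family $\{M(n)\}_{n \ge 1}$ of indecomposables in $\Rep(S,\FF_1)_{\nil}$ with $\dim M(n) = mn \to \infty$.

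Next I would push this family forward along the functor $\mathcal{F} : \Rep(S,\FF_1)_{\nil} \to \Rep(Q,\FF_1)_{\nil}$ of Lemma \ref{lemma: l.subq}. That lemma guarantees $\mathcal{F}$ preserves indecomposability, so every $\mathcal{F}(M(n))$ is indecomposable; since $\mathcal{F}$ merely extends a representation by zero outside $S$, it preserves dimension, whence $\dim \mathcal{F}(M(n)) = mn \to \infty$. Thus the $\mathcal{F}(M(n))$ occupy infinitely many distinct dimensions and are automatically pairwise non-isomorphic, so $Q$ is not of finite type. (If one prefers to avoid the dimension bookkeeping, injectivity of $\mathcal{F}$ on isomorphism classes follows instead from $\Res_S \circ \mathcal{F} = \id$.) The two invocations of the established lemmas are routine; the only point requiring genuine care is the passage from ``$\overline{Q}$ is not a tree'' to ``$Q$ contains a cycle-quiver (or loop) subquiver,'' where one must check that an undirected cycle in $\overline{Q}$ lifts to a subquiver of $Q$ whose underlying graph is a cycle regardless of how its edges happen to be oriented in $Q$ — precisely the generality in which Lemma \ref{lemma: l.cyc} is stated. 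I expect no serious obstacle beyond this bookkeeping.
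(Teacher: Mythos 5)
Your proposal is correct and follows essentially the same route as the paper: cite Szczesny for the tree direction, then for the converse extract a cycle subquiver $C \subseteq Q$, apply Lemma \ref{lemma: l.cyc} to get infinitely many indecomposables over $C$, and push them into $\Rep(Q,\FF_1)_{\nil}$ via the extension-by-zero functor of Lemma \ref{lemma: l.subq}. Your extra care about the single-loop case and about distinguishing the images by dimension is harmless elaboration of the same argument.
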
 

\begin{proof} 
It only remains to prove that if $Q$ is not a tree, then $Q$ is not of finite type. If $Q$ is not a tree, then the underlying graph of $Q$ contains a cycle. Let $C$ be the subquiver of $Q$ corresponding to this cycle. By Lemma \ref{lemma: l.cyc}, $C$ is not of finite type. By Lemma \ref{lemma: l.subq}, this means that $\Rep(Q,\FF_1)_{\nil}$ contains infinitely many non-isomorphic indecomposables, and hence is not of finite type.
\end{proof}   

%%%%%%%%%%%%%%%(Bounded Type)%%%%%%%%%%%%%%

\subsection{Quivers of bounded representation type over $\FF_1$}

We say that $Q$ has \emph{bounded representation type over $\FF_1$} if $Q \le_{\nil} \wild_1$. In other words, $Q$ is of bounded representation type if and only if $\NI_Q = O(1)$. From the previous subsection, tree quivers are of bounded representation type over $\FF_1$. In this subsection, we prove that if $Q$ is of bounded representation type and $Q$ is not a tree, then $Q$ is of type $\tilde{\mathbb{A}}_n$.

Recall that an undirected graph $G$ can be viewed as an $1$-dimensional simplicial complex, and its first homology $H_1(G,\ZZ_2)$ is called its \emph{cycle space} \cite{AlgGraphTheory}. It can be interpreted as the set of all (spanning) Eulerian subgraphs of $G$, with addition being given by symmetric difference. As a $\ZZ_2$-vector space it can be given a \emph{fundamental cycle basis} as follows: take a spanning tree $T$ of $G$. For each edge $e$ not in $T$, let $C_e$ be the cycle consisting of $e$ together with the path in $T$ connecting its endpoints. The collection $\{ C_e\mid e\not\in T\}$ forms a $\ZZ_2$-basis for $H_1(G,\ZZ_2)$ with the property that each element contains an edge not contained in the others \cite{LiebchenRizzi2007}.

\begin{pro}\label{proposition: rank}
Let $Q$ be a connected quiver with underlying graph $\overline{Q}$. If $\dim_{\ZZ_2}H_1(\overline{Q},\ZZ_2) \geq 2$, then there is an exact, fully faithful functor  
\[ 
\mathcal{F}: \Rep(\wild_2,\FF_1)_{\nil} \rightarrow \Rep(Q,\FF_1)_{\nil}.
\] 
Furthermore, $\mathcal{F}$ preserves indecomposability, and there exists a natural number $C$ such that  
\[  
\dim_{\FF_1}(\mathcal{F}(M)) = C\cdot\dim_{\FF_1}(M), 
\]
 for all objects $M \in \Rep(\wild_2,\FF_1)_{\nil}$. In particular, $Q \approx_{\nil} \wild_2$.   
\end{pro}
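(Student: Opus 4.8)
The plan is to construct an explicit functor $\mathcal{F}$ with all of the stated properties, and then to combine the resulting inequality $\wild_2 \le_{\nil} Q$ with Theorem \ref{theorem: upper bound} ($Q\le_{\nil}\wild_2$) to conclude $Q \approx_{\nil} \wild_2$. Since $\dim_{\ZZ_2} H_1(\overline{Q},\ZZ_2) \geq 2$, any spanning tree $T$ of $\overline{Q}$ omits at least two edges; I fix two of them, giving distinct arrows $e_1, e_2 \in Q_1$ (with whatever orientations they happen to carry in $Q$). I will define $\mathcal{F}$ on an object $M = (V_0, f_a, f_b)$ of $\Rep(\wild_2,\FF_1)_{\nil}$ by setting $\mathcal{F}(M)_v = V_0$ for every $v \in Q_0$, assigning the identity $\id_{V_0}$ to every tree arrow, $f_a$ to $e_1$, $f_b$ to $e_2$, and the zero map to every remaining arrow of $Q$. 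On a morphism $\psi \colon M \to M'$ (a single $\FF_1$-linear map $V_0 \to V_0'$ intertwining the loop maps) I set $\mathcal{F}(\psi)$ to be $\psi$ at every vertex; the intertwining relations make $\mathcal{F}(\psi)$ commute with all arrow maps, so $\mathcal{F}$ is a well-defined functor.

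The subtlest point — and the step I expect to be the main obstacle — is verifying that $\mathcal{F}(M)$ is genuinely nilpotent. Here $\id_{V_0}$ is visibly not nilpotent, and a product of two individually nilpotent $\FF_1$-maps need not be nilpotent, so nilpotency does \emph{not} follow from the mere fact that $f_a$ and $f_b$ are nilpotent. The key observation is that $M$ being a nilpotent \emph{representation} means that every sufficiently long word in $f_a$ and $f_b$ vanishes. I would argue as follows: along any directed path in $Q$, consecutive non-tree arrows are separated by directed segments inside $T$, and since $T$ is acyclic such segments have length at most $|Q_0|-1$; hence a directed path of length $\ell$ traverses at least $(\ell - |Q_0| + 1)/|Q_0|$ non-tree arrows. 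The induced composite in $\mathcal{F}(M)$ is either zero (if some traversed arrow carries the zero map) or, after discarding the identity factors coming from $T$, a word in $f_a,f_b$ whose length equals the number of traversed copies of $e_1,e_2$. Once $\ell$ exceeds $N|Q_0|$, where $N$ is the nilpotency bound of $M$, this word has length $\geq N$ and so vanishes; thus $\mathcal{F}(M)$ is nilpotent.

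The remaining properties should be comparatively routine. Exactness follows because kernels and cokernels in $\Rep(Q,\FF_1)_{\nil}$ are computed vertexwise while $\mathcal{F}$ replicates the same data at each vertex, so $\mathcal{F}$ preserves (co)kernels and hence short exact sequences. For full faithfulness, given any morphism $\Phi = (\Phi_v)_{v \in Q_0} \colon \mathcal{F}(M) \to \mathcal{F}(M')$, commutativity at each tree arrow forces $\Phi_w = \Phi_v$ whenever $v,w$ are joined in $T$; since $T$ is spanning and connected, all $\Phi_v$ coincide with a single map $\psi \colon V_0 \to V_0'$, and commutativity at $e_1$ and $e_2$ shows $\psi$ intertwines the loop maps, so $\Phi = \mathcal{F}(\psi)$. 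This gives $\Hom(M,M') \cong \Hom(\mathcal{F}M, \mathcal{F}M')$, and in particular a ring isomorphism $\operatorname{End}(M) \cong \operatorname{End}(\mathcal{F}M)$; since both categories satisfy Krull--Schmidt, matching idempotents shows $\mathcal{F}$ preserves and reflects indecomposability. Finally $\dim \mathcal{F}(M) = |Q_0|\cdot \dim M$, so $C = |Q_0|$, and $\mathcal{F}$ injects $d$-dimensional indecomposables of $\wild_2$ into $(Cd)$-dimensional indecomposables of $Q$, yielding $\NI_{\wild_2}(d) \leq \NI_Q(Cd)$ and hence $\wild_2 \le_{\nil} Q$. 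Together with Theorem \ref{theorem: upper bound} this gives $Q \approx_{\nil} \wild_2$.
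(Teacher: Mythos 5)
Your proof is correct, and it follows the same underlying mechanism as the paper's: place $V_0$ at every vertex, use identity maps along a connected acyclic subquiver to force all components of a morphism to agree (giving fullness), carry $f_1,f_2$ on two independent cycle-closing arrows, deduce indecomposability preservation from the induced isomorphism of endomorphism monoids, and combine $\wild_2 \le_{\nil} Q$ with Theorem \ref{theorem: upper bound}. The one substantive difference is the choice of support: the paper supports $\mathcal{F}(M)$ only on the subquiver $S$ consisting of two fundamental cycles $C_\alpha$, $C_\beta$ and a connecting path $w$ (so $C = |S_0|$ and the zero-outside-$S$ arrows never interact with identities), whereas you support it on all of $Q$ using a full spanning tree (so $C=|Q_0|$, and zero maps sit on the remaining non-tree arrows). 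Your version makes the fullness argument slightly more immediate, at the cost of a more delicate nilpotency check --- and to your credit you carry that check out explicitly and correctly (bounding directed walks in the oriented spanning tree by $|Q_0|-1$ and using that \emph{every} word of length $\ge N$ in $f_a,f_b$ vanishes for a nilpotent $\wild_2$-representation), a point the paper leaves implicit even though its construction also requires it. One cosmetic remark: $\operatorname{End}(M)$ here is a monoid rather than a ring, so the indecomposability step should be phrased via idempotents in the endomorphism monoid (as in Lemma 4.2 of Szczesny's paper, which is what the published proof cites); the substance of your argument is unaffected.
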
 
\begin{proof} 
Let $T$ be a spanning tree of $\overline{Q}$, with fundamental cycle basis $\{ C_e\mid e \not\in T\}$. Since  
\[
\dim_{\ZZ_2}H_1(\overline{Q},\ZZ_2) \geq 2, 
\] 
this basis contains at least two elements $C_{\alpha}$ and $C_{\beta}$. In other words, $C_{\alpha}$ and $C_{\beta}$ are two fundamental cycles in $\overline{Q}$ with the property that $C_{\alpha}$ contains $\alpha$ but not $\beta$, and $C_{\beta}$ contains $\beta$ but not $\alpha$. Since $\overline{Q}$ is connected, we can find a path $w$ (possibly of length $0$) that connects a vertex in $C_{\alpha}$ to a vertex in $C_{\beta}$. By possibly choosing a shorter path, we may assume that $w$ passes through neither $\alpha$ nor $\beta$. Let $S$ denote the subquiver of $Q$ whose arrows are precisely those corresponding to $C_{\alpha}$, $C_{\beta}$ and $w$. By abuse of notation, we use $\alpha$, $C_{\alpha}$ etc. to refer to the arrows/subquivers in $Q$ corresponding to the edges/subgraphs in $\overline{Q}$. Note that $S$ is a connected subquiver. 
We now define a map on objects $\mathcal{F} : \Rep(\wild_2,\FF_1)_{\nil} \rightarrow \Rep(S,\FF_1)_{\nil}$. If $\mathbb{V} = (V,f_1,f_2)$ is a nilpotent representation of $\wild_2$, then $\mathcal{F}(\mathbb{V})$ is defined by the following three criteria:  
\begin{enumerate}
\item 
At each vertex $v \in S_0$, $\mathcal{F}(\mathbb{V})_v = V$, and zero otherwise; 
\item 
$\mathcal{F}(f_\alpha) = f_1$ and $\mathcal{F}(f_\beta) = f_2$; 
\item 
$\mathcal{F}(f_\gamma) = \id_{V}$ for all $\gamma \in S_1\setminus\{ \alpha,\beta\}$, and zero otherwise.  
\end{enumerate}
We claim that $\mathcal{F}$ is a functor. Indeed, let $\phi : \mathbb{V}=(V,f_1,f_2) \rightarrow \mathbb{W}=(W,g_1,g_2)$ be a morphism in $\Rep(\wild_2,\FF_1)_{\nil}$. Setting $\mathcal{F}(\phi)_v = \phi$ for each vertex $v \in S_0$ yields a morphism $\mathcal{F}(\phi) : \mathcal{F}(\mathbb{V}) \rightarrow \mathcal{F}(\mathbb{W})$, and it is easy to check that $\mathcal{F}$ respects identity morphisms and composition. It is clear that $\mathcal{F}$ is a faithful, exact functor that sends an $n$-dimensional representation to an $|S_0|n$-dimensional representation. We claim that it is also full, and preserves indecomposability.  

To prove fullness, let $\psi : \mathcal{F}(\mathbb{V}) \rightarrow \mathcal{F}(\mathbb{W})$ be a morphism in $\Rep(S,\FF_1)_{\nil}$. Commutativity at $\alpha$ means that $\psi_{t(\alpha)}f_1 = g_1\psi_{s(\alpha)}$, whereas commutativity at $\beta$ means that $\psi_{t(\beta)}f_2 = g_2\psi_{s(\alpha)}$. Since $C_{\alpha}$ does not contain $\beta$, commutativity at the other arrows of $C_{\alpha}$ (which are identity maps) implies that $\psi_{u} = \psi_v$ for all vertices $u$ and $v$ of $C_{\alpha}$. Since $w$ connects to $C_{\alpha}$ and contains neither $\alpha$ nor $\beta$, $\psi_u = \psi_v$ for all vertices $u, v$ in $w$ or $C_{\alpha}$. Since the other endpoint of $w$ lies in $C_{\beta}$, and $\beta$ is the only non-identity arrow in $C_{\beta}$, we can finally conclude that $\psi_u = \psi_v$ for all $u,v \in S_0$. In other words, $\psi = \mathcal{F}(\psi_u)$ for any vertex $u$ of $S_0$ and $\mathcal{F}$ is full. Since $\mathcal{F}$ is fully faithful, we have monoid isomorphisms $\operatorname{End}(\mathbb{V}) \cong \operatorname{End}(\mathcal{F}(\mathbb{V}))$, for every object $\mathbb{V}$. Lemma 4.2 of \cite{szczesny2011representations} now implies that $\mathcal{F}$ preserves indecomposables.
 
%Now we prove that $\mathcal{F}$ preserves indecomposability. Suppose that $\mathbb{V}=(V,f_1,f_2)$ is an indecomposable representation of $\wild_2$ over $\mathbb{F}_1$, and that $\mathcal{F}(\mathbb{V}) = X \oplus Y$ for some $S$-representations $X$ and $Y$. Then for every arrow $\gamma \in S_1$, $X_{s(\gamma)}$ (resp. $Y_{s(\gamma)}$) gets sent to $X_{t(\gamma)}$ (resp. $Y_{t(\gamma)}$) by $\mathcal{F}(\mathbb{V})_{\gamma}$. Applying this to the arrows in $C_{\alpha}$ yields that $X_u = X_v$ (resp. $Y_u = Y_v)$ for all vertices $u,v$ of $C_{\alpha}$, and that each of these $\FF_1$-vector spaces is invariant under $\mathcal{F}(\mathbb{V})_{\alpha} = f_1$. Since the quiver formed from deleted $\alpha$ and $\beta$ from $S$ is still connected, applying this condition to every arrow in $S_1\setminus\{\alpha,\beta\}$ implies that $X_u = X_v$ and $Y_u = Y_v$ for all $u,v \in S_0$. Applying the condition at $\alpha$ implies that these $\FF_1$-linear subspaces are invariant under $\mathcal{F}(\mathbb{V})_{\alpha} = f_1$, and applying the condition at $\beta$ implies that they are invariant under $\mathcal{F}(\mathbb{V})_{\beta} = f_2$. In other words, the decomposition $\mathcal{F}(\mathbb{V}) = X\oplus Y$ induces a direct sum decomposition of $V$ which is $f_1$-invariant and $f_2$-invariant. Since $\mathbb{V}$ is indecomposable, we must have either $X = 0$ or $Y = 0$, from which it follows that $\mathcal{F}$ preserves indecomposables. 

Finally, we conclude that $\NI_{\wild_2}(n) \le \NI_{S}(|S_0|n)$ for all $n$, so that $\wild_2 \le_{\nil} S$. But $S$ is a subquiver of $Q$ and hence $S \le_{\nil} Q$, so that transitivity implies $\wild_2 \le_{\nil}Q$. Combining this with Theorem \ref{theorem: upper bound} yields $Q \approx_{\nil} \wild_2$.
\end{proof}  

\begin{rmk} 
We suspect that a similar statement holds in general for $Q$ and $\wild_n$, where  
\[
n \le \dim_{\ZZ_2}{H_1(\overline{Q},\ZZ_2)}. 
\]
We have not included it here because the $n = 2$ case is sufficient for the purposes of this document. 
\end{rmk}

\noindent This theorem immediately implies the following corollary:
 
\begin{cor} \label{corollary: quiver} 
Let $Q$ be a connected quiver with underlying graph $\overline{Q}$ such that $Q \le_{\nil} \wild_1$. Then $\dim_{\ZZ_2}H_1(\overline{Q},\ZZ_2) \le 1$.
\end{cor}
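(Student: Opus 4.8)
The plan is to argue by contraposition. Suppose instead that $\dim_{\ZZ_2}H_1(\overline{Q},\ZZ_2) \geq 2$; I will show that this forces $Q \not\le_{\nil} \wild_1$. The key input is Proposition \ref{proposition: rank}: the hypothesis on the cycle space applies directly and gives $Q \approx_{\nil} \wild_2$. What I actually need from this equivalence is the single direction $\wild_2 \le_{\nil} Q$, so I would extract precisely that relation.

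Next I would combine $\wild_2 \le_{\nil} Q$ with the assumption $Q \le_{\nil} \wild_1$ that is to be contradicted. Since $\le_{\nil}$ is transitive, chaining these two relations yields $\wild_2 \le_{\nil} \wild_1$. Unwinding the definition, this means there exist $D \in \mathbb{R}_+$ and $C \in \NN$ with $\NI_{\wild_2}(n) \le D\,\NI_{\wild_1}(Cn)$ for all $n \gg 0$. But the Jordan quiver has a bounded indecomposable-growth function, $\NI_{\wild_1} \in O(1)$, so the right-hand side stays bounded and therefore $\NI_{\wild_2}$ would be bounded as well. This contradicts $\limsup_m \NI_{\wild_2}(m) = \infty$, which is recorded in the Example following Corollary \ref{corollary: L_m and L_n} (equivalently, it is the separation $\wild_1 \not\approx_{\nil} \wild_2$ noted there). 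Hence $Q \le_{\nil} \wild_1$ is impossible, which establishes the contrapositive.

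There is no genuine obstacle here, as the corollary is a formal consequence of Proposition \ref{proposition: rank}, the transitivity of $\le_{\nil}$, and the already-noted boundedness of $\NI_{\wild_1}$ against the unboundedness of $\NI_{\wild_2}$. The only point deserving care is to invoke the correct direction of $Q \approx_{\nil} \wild_2$ — namely $\wild_2 \le_{\nil} Q$ — so that transitivity chains correctly through the hypothesis $Q \le_{\nil} \wild_1$ rather than in the wrong order.
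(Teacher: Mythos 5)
Your argument is correct and is precisely the intended one: the paper presents this corollary as an immediate consequence of Proposition \ref{proposition: rank}, and the contrapositive chain $\wild_2 \le_{\nil} Q \le_{\nil} \wild_1$ contradicting the boundedness of $\NI_{\wild_1}$ versus the unboundedness of $\NI_{\wild_2}$ is exactly what is being left implicit. You also correctly isolate the only delicate point, namely using the direction $\wild_2 \le_{\nil} Q$ of the equivalence so that transitivity chains through the hypothesis.
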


Let $Q$ be a quiver satisfying the hypotheses of Corollary \ref{corollary: quiver}. We note that $Q$ is a tree if and only if $H_1(\overline{Q},\ZZ_2) = 0$. If $\dim_{\ZZ_2}H_1(\overline{Q},\ZZ_2) = 1$ then $\overline{Q}$ must have a simple cycle which we call $C$. Since $\overline{Q}$ has only one cycle, the graph $\overline{Q}\setminus C$ formed by deleting $C$ and all edges adjacent to it must be a forest. In other words, $\overline{Q}$ can be formed from $C$ by gluing a tree to each vertex of $C$ (note that this tree may be a single vertex). If $v \in C_0$ is a vertex, we let $T_v$ denote the corresponding tree. The vertex $v$ satisfies $\{ v\} = (T_v)_0 \cap C_0$ and is called the \emph{root} of $T_v$. Note that if $u$ and $v$ are distinct vertices of $C$ then $T_u$ and $T_v$ are disjoint subgraphs of $\overline{Q}$. The graph $\overline{Q}$ is known as a \emph{pseudotree}.  \par \medskip 

\begin{mydef} 
Let $Q$ be a pseudotree with (unoriented) cycle $C$. We say that $Q$ is a \emph{proper pseudotree} if $T_v$ is not a point, for some $v \in C_0$.
\end{mydef}

The following lemma demonstrates a useful way to obtain direct sum decompositions for tree representations.

\begin{lem}\label{lemma: tree decomp}
Let $T$ be an oriented tree and with root $v$. Let $M$ be a nilpotent $\FF_1$-representation of $T$. If $M_v = A \oplus B$ is a direct sum decomposition of $\FF_1$-vector spaces, then there is a direct sum decomposition $M = X\oplus Y$ of $T$-representations such that $X_v = A$ and $Y_v = B$.
\end{lem}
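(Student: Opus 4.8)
The plan is to translate the statement into the language of colored quivers and reduce it to a purely combinatorial fact about $\Gamma_M$. First I would note that since $T$ is a tree, every directed path in $T$ has length at most $|T_0|-1$, so every representation of $T$ is automatically nilpotent and $\Gamma_M$ is acyclic by Lemma \ref{l.basicprop}. By Lemma \ref{l.basicprop}(6) — more precisely, the correspondence between separations of $\Gamma_M$ and direct-sum decompositions of $M$ established in its proof — it suffices to partition the vertex set of $\Gamma_M$ into two pieces $P$ and $P'$ between which no arrow runs, such that $P\cap M_v = A\setminus\{0\}$ and $P'\cap M_v = B\setminus\{0\}$. Setting $X_u = (P\cap M_u)\cup\{0\}$ and $Y_u = (P'\cap M_u)\cup\{0\}$ then yields the desired decomposition $M = X\oplus Y$ with $X_v = A$ and $Y_v = B$.

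The heart of the argument is the claim that every connected component $C$ of $\Gamma_M$ meets $M_v$ in at most one nonzero element (in fact meets each $M_u$ in at most one vertex, i.e.\ $C$ is \emph{thin}). Granting this, each component contains at most one color-$v$ vertex, which lies in either $A\setminus\{0\}$ or $B\setminus\{0\}$. I would then let $P$ be the union of all components whose color-$v$ vertex lies in $A$, together with all components containing no color-$v$ vertex, and let $P'$ be the union of the remaining components. Since any arrow of $\Gamma_M$ lies within a single component, no arrow runs between $P$ and $P'$, and by construction $P\cap M_v = A\setminus\{0\}$ and $P'\cap M_v = B\setminus\{0\}$, as required.

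To prove thinness, consider the quiver map $\pi\colon \Gamma_M \to T$ of Lemma \ref{l.basicprop}(1). Because each $f_\alpha$ is an $\FF_1$-linear map, for every arrow $\alpha$ of $T$ there is at most one $\alpha$-colored arrow of $\Gamma_M$ out of (resp.\ into) any given vertex; and since $T$ has no loops, the edges incident to a vertex $x$ of $M_u$ carry pairwise distinct arrow-colors and hence are sent by $\pi$ to distinct edges of $T$ incident to $u$. Thus on underlying undirected graphs $\pi$ is an \emph{immersion} (a locally injective graph morphism). I would then invoke the elementary fact that a connected graph admitting an immersion into a tree is itself a tree and the immersion is injective on vertices: a cycle in $C$, or two distinct vertices of $C$ with the same image, would yield a reduced (non-backtracking) closed walk of positive length in $T$, which is impossible in a tree. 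Hence $\pi|_C$ is injective and $C$ is thin.

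The main obstacle — indeed the only nontrivial point — is this local-injectivity/immersion argument; once one observes that the partial injectivity of the $f_\alpha$ forces $\pi$ to be locally injective, the reduction to ``reduced closed walks in a tree are trivial'' is routine. Everything else (the automatic nilpotency, and the passage from a separation of $\Gamma_M$ to a direct-sum decomposition of $M$) is already contained in the material preceding this lemma.
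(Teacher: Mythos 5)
Your proof is correct, but it takes a genuinely different route from the paper. The paper argues by induction on $|T_0|$: it deletes a leaf $u \neq v$, applies the inductive hypothesis to the restriction to the smaller tree, and then extends the decomposition across the single arrow $\alpha$ incident to $u$ by an explicit two-case construction of $X_u$ and $Y_u$ (as a preimage-plus-kernel when $u$ is the source of $\alpha$, and as an image and its complement when $u$ is the target). Your argument instead establishes a stronger structural fact: via the local injectivity of the projection $\Gamma_M \to T$ forced by $\FF_1$-linearity and the absence of loops and multiple edges in a tree, every connected component of $\Gamma_M$ embeds into $T$ and is therefore thin, so distinct nonzero elements of $M_v$ lie in distinct indecomposable summands and \emph{any} decomposition of $M_v$ lifts by regrouping components. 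Each step you use (automatic nilpotency over acyclic quivers, the separation-to-decomposition dictionary from the proof of Lemma \ref{l.basicprop}(6), and the fact that a connected graph immersing into a tree injects into it) is sound. What your approach buys is more information — thinness of indecomposables over tree quivers, which in particular recovers Szczesny's finite-type result for trees and shows the lemma holds simultaneously at all vertices — at the cost of importing the graph-immersion fact; the paper's induction is more elementary and self-contained, and produces the summands by explicit formulas rather than through the colored-quiver dictionary.
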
  
\begin{proof} 
We prove this by induction on $|T_0|$. If $|T_0| = 1$ then $T$ has no arrows and the claim is clear. Suppose the claim holds for all trees $T'$ with $|T'_0| < n$ and that $|T_0| = n$. Every tree has at least two leaves, so we can find a leaf $u \in T_0$ different from $v$. Let $T'$ be the tree formed from deleting $u$ and the arrow incident to it. Then $|T'_0|<n$ and the restricted representation $\Res_{T'}(M)$ decomposes as  
\[
\Res_{T'}(M)= X'\oplus Y', 
\]  
with $X'_v = A$ and $Y'_v = B$. If we define $X_w = X'_w$ and $Y_w = Y'_w$ for all $w \in T_0\setminus\{u\}$, then we will be done if we can find a direct sum decomposition 
\[ 
M_u = X_u \oplus Y_u 
\] 
such that $M = X\oplus Y$ as $T$-representations. Let $\alpha$ denote the arrow incident to $u$, and define $f$ to be the map $M_{s(\alpha)} \rightarrow M_{t(\alpha)}$ associated to $\alpha$.  There are two cases to consider: \\
\underline{Case 1:} Suppose that $s(\alpha) = u$. Let  
\[
X_u = f^{-1}(X_{t(\alpha)}\setminus\{0\}) \cup \ker (f), 
\] 
\[ 
Y_u = f^{-1}(Y_{t(\alpha)}\setminus\{0\}) \cup \{0\}.
\] 
Then $X_u$ and $Y_u$ are $\FF_1$-subspaces of $M_u$ such that $M_u = X_u \oplus Y_u$, $f(X_u) \subseteq X_{t(\alpha)}$ and $f(Y_u) \subseteq Y_{t(\alpha)}$. In other words, $X$ and $Y$ are sub-representations of $M$ and $M = X\oplus Y$. \\ 
\underline{Case 2:} Suppose that $t(\alpha) = u$. Let  
\[
X_u = f(X_{s(\alpha)}), 
\] 
\[ 
Y_u = (M_u \setminus f(X_{s(\alpha)})) \cup \{ 0\} .
\] 
Then $X_u$ and $Y_u$ are $\FF_1$-subspaces of $M_u$ such that $M_u = X_u \oplus Y_u$, $f(X_{s(\alpha)}) \subseteq X_u$ and $f(Y_{s(\alpha)}) \subseteq Y_u$. In other words, $X$ and $Y$ are sub-representations of $M$ with $M = X\oplus Y$.
\end{proof}

\begin{cor} 
Let $Q$ be a connected quiver that is obtained by gluing a tree $T$ to a subquiver $S$ along a root $v \in T_0$. Then the restriction functor $\Res_S$ preserves indecomposability.
\end{cor}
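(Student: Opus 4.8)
The plan is to prove the contrapositive: if $\Res_S(M)$ is decomposable, then $M$ itself is decomposable. The entire argument rests on Lemma \ref{lemma: tree decomp}, which allows one to extend any $\FF_1$-vector-space splitting at the root of a tree to a splitting of the whole tree representation. The strategy is to split $M$ on the $S$-part, transport the induced splitting at the gluing vertex $v$ to the tree $T$ via that lemma, and then glue the two local decompositions into a global one.

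Concretely, I would begin with a nontrivial decomposition $\Res_S(M) = X' \oplus Y'$ in $\Rep(S,\FF_1)_{\nil}$. Since $v \in S_0$, this restricts to a direct sum decomposition $M_v = X'_v \oplus Y'_v$ of the $\FF_1$-vector space at $v$. Because $M$ is nilpotent, its restriction $\Res_T(M)$ is a nilpotent representation of the oriented tree $T$; applying Lemma \ref{lemma: tree decomp} with root $v$ and the \emph{prescribed} splitting $M_v = X'_v \oplus Y'_v$ yields a decomposition $\Res_T(M) = X'' \oplus Y''$ satisfying $X''_v = X'_v$ and $Y''_v = Y'_v$. Finally I would glue: since $Q$ is the union of $S$ and $T$ meeting only at $v$, and every arrow of $Q$ lies entirely in $S_1$ or entirely in $T_1$, the pieces $X'$ and $X''$ (resp. $Y'$ and $Y''$) agree at $v$ and assemble into subrepresentations $X$ and $Y$ of $M$ with $M = X \oplus Y$. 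This decomposition is nontrivial because $X'$ and $Y'$ are both nonzero, hence so are $X$ and $Y$.

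The step I expect to be the crux is the compatibility at the shared vertex $v$. This is precisely why it matters that Lemma \ref{lemma: tree decomp} lets one \emph{prescribe} the splitting of $M_v$ rather than merely asserting that some splitting exists: choosing the tree-side decomposition so that it restricts to the same partition of $M_v$ coming from the $S$-side decomposition is exactly what makes the two local decompositions glue into a global one. Once this is arranged, verifying that each arrow map respects $X$ and $Y$ is routine, since no arrow of $Q$ connects $S \setminus \{v\}$ to $T \setminus \{v\}$: an arrow in $S$ is controlled by the decomposition $X' \oplus Y'$ and an arrow in $T$ by $X'' \oplus Y''$, so $M_\alpha(X_{s(\alpha)}) \subseteq X_{t(\alpha)}$ and $M_\alpha(Y_{s(\alpha)}) \subseteq Y_{t(\alpha)}$ hold in every case.
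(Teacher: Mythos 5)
Your proposal is correct and follows essentially the same route as the paper: both arguments take a decomposition of $\Res_S(M)$, use Lemma \ref{lemma: tree decomp} to extend the induced splitting of $M_v$ to a compatible decomposition of $\Res_T(M)$, and glue the two along $v$ to decompose $M$. The only cosmetic difference is that you argue by contrapositive while the paper starts from an indecomposable $M$ and concludes the restricted decomposition is trivial.
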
 
\begin{proof} 
Let $M$ be an indecomposable $\FF_1$-representation of $Q$. If $\Res_S(M) = X'\oplus Y'$ as $S$-representations then in particular $M_v = X'_v \oplus Y'_v$ as $\FF_1$-vector spaces. By Lemma \ref{lemma: tree decomp} we can now find a decomposition $\Res_T(M) = X''\oplus Y''$ with $X''_v = X'_v$ and $Y''_v = Y'_v$. This induces a direct sum decomposition $M = X\oplus Y$ of $Q$-representations, where $X$ (resp. $Y$) satisfies $\Res_S(X) = X'$ and $\Res_T(X) = X''$ (resp. $\Res_S(Y) = Y'$ and $\Res_T(Y) = Y''$). By the indecomposability of $M$ we have $X = 0$ or $Y = 0$, which in turn implies $X' = 0$ or $Y' = 0$.
\end{proof}    

We are now in a position to study representations of pseudotrees more deeply. This requires a careful study of indecomposable modules over quivers of type $\tilde{\mathbb{A}}_n$. First, we generalize a construction from  \cite[Section 11]{szczesny2011representations} to the acyclic case. 

%Generalizing Szczesny's cycle representations: 
\begin{construction}\label{construction: strings} Let $C$ be a quiver of type $\tilde{\mathbb{A}}_{l}$. Order the vertices of $C$ cyclically as $1,\ldots , l$ with the subscripts understood to be taken mod $l$, as needed; let $\alpha_i$ denote the arrow connecting $i$ to $i+1$. With respect to this cyclic ordering, we can define a family of indecomposable representations $I_{[n,i]}$ as follows: For each $1 \le j \le l$, let  
\[
(I_{[n,i]})_j = \{ k \mid 1\le k \le n, k \equiv j - i+1 \text{ }(\operatorname{mod} l)\} \cup \{ 0 \}.  
\]
For each $k \in (I_{[n,i]})_j$ with $1 \le k < n$, note that $k+1 \in (I_{[n,i]})_{j+1}$. We define the map $f_j$ corresponding to $\alpha_j$ according to one of two cases: \\  
\noindent \underline{Case 1}: Suppose that $s(\alpha_j) = j$. Then for each $k \in (I_{[n,i]})_j$ with $1 \le k < n$, we have $f_j(k) = k+1$; if $n \in (I_{[n,i]})_j$ then $f_j(n) = 0$.\\ 
\noindent \underline{Case 2:} Suppose that $t(\alpha_j) = j$. Then $s(\alpha_j) = j+1$, and for each $k \in (I_{[n,i]})_{j+1}$ with $k > 1$ we define $f_j(k) = k-1$. If $1 \in (I_{[n,i]})_{j+1}$ then $f_j(1) = 0$. \\ 
Let $\Gamma = \Gamma_{I_{[n,i]}}$. Note that $\overline{\Gamma}$ is a path of length $n-1$, so that $I_{[n,i]}$ is indecomposable. We call the vertex of $\Gamma$ corresponding to $1$ the \emph{start} of $\Gamma$ and the vertex corresponding to $n$ the \emph{end} of $\Gamma$. If $C$ is equioriented, then the $I_{[n,i]}$ are precisely the representations described in  \cite{szczesny2011representations}.   
\end{construction}  

%Another generalization:  
\begin{construction}\label{construction: bands}
Let $C$ be an \emph{acyclic} quiver of type $\tilde{\mathbb{A}}_l$, with the vertices and arrows labeled as before. Then for each natural number $w$, we define another family of indecomposable representations $\tilde{I}_{w}$. These can be described through their quiver as follows: first start with the coefficient quiver of $I_{[wl,i]}$ described above. Then define a new representation $\tilde{I}_{[wl,i]}$ by attaching an $\alpha_{i-1}$-colored arrow between $1$ and $wl$, call it $\alpha$. We specify $s(\alpha) = 1$ if $s(\alpha_{i-1}) = i-1$ and $s(\alpha) = wl$ if $t(\alpha_{i-1}) = i-1$. In terms of the maps on arrows, this simply means that we require $f_{i-1}(1) = wl$ if $s(\alpha_{i-1}) = i-1$ and $f_{i-1}(wl) = 1$ if $t(\alpha_{i-1})=i-1$. Note that since $C$ is assumed to be acyclic, $\tilde{I}_{[wl,i]}$ is nilpotent. Furthermore, note that the coefficient quiver of $\tilde{I}_{[wl,i]}$ is obtained by winding around $C$ in a fixed direction $w$ times, so that $\tilde{I}_{[wl,i]} \cong \tilde{I}_{[wl,j]}$ for all $i, j \in C_0$. Hence we can define $\tilde{I}_w := \tilde{I}_{[wl,i]}$ unambiguously. 
 \end{construction}   
  
 %Classification of representations over cycle quivers: 
 \begin{mytheorem} 
Let $C$ be a quiver of type $\tilde{\mathbb{A}}_l$. If $C$ is acyclic, then any indecomposable nilpotent $\FF_1$-representation of $C$ is isomorphic to either $I_{[n,i]}$ or $\tilde{I}_n$ for some natural number $n$. If $C$ is equioriented, then the indecomposable nilpotent representations are all isomorphic to $I_{[n,i]}$ for some choice of $n$ and $i$. In particular, $C \approx_{\nil} \wild_1$.
 \end{mytheorem} 
 \begin{proof} 
Let $n$ be a natural number, with $M$ an indecomposable nilpotent $\FF_1$-representation of $C$ with dimension $n$. If $C$ is equioriented then the result follows from the classification of indecomposables given in \cite[Section 11]{szczesny2011representations}, so without loss of generality we may assume that $C$ is acyclic. In particular, $C$ has no loops. Let $\Gamma_M$ be the associated coefficient quiver. Note that since any vertex of $C$ is incident to two arrows, any vertex of $\overline{\Gamma}_M$ has valence at most two. It follows that $\overline{\Gamma}_M$ is either a line graph or a cycle. If $\overline{\Gamma}_M$ is a line graph then it is isomorphic to $I_{[n,i]}$ for some vertex $i$. Otherwise, it is a cycle and is isomorphic to $\tilde{I}_{n/l}$ (note that this can only happen when $n$ is divisible by $l$). It follows that for all $n$, 
\[ 
\NI_C(n) \le |C_0|+1 = (|C_0|+1) \cdot 1 = (|C_0|+1)\NI_{\wild_1}(n).
\] 
In particular, we have $C \le_{\nil}\wild_1$. Since $I_{[n,i]}$ exists for each $n\geq 1$, we also have
\[ 
\NI_C(n) \geq 1 = \NI_{\wild_1}(n).
\] 
Combining these inequalities yields $C \approx_{\nil} \wild_1$.
\end{proof} 

We now show that proper pseudotrees are not of bounded representation type. %To do this, we will need the following definition.

%Definition we'll need to show that proper pseudotrees grow faster than L_1: 
%\begin{mydef} 
%Let $T$ be a tree with $v \in T_0$. We say that an object $M \in \Rep(T,\FF_1)_{\nil}$ is \emph{$v$-full} if $t =\dim_{\FF_1}(M_v)$ is the number of indecomposable summands of $M$; i.e. if $M \cong M_1 \oplus \cdots \oplus M_t$, where each $M_i$ is an indecomposable.
%\end{mydef}  

%Pseudotrees grow faster than L_1: 
\begin{cor} 
Let $Q$ be a proper pseudotree with cycle $C$. Then there exists a natural number $K$ such that for all $n$, $\NI_Q(Kn) \geq n$. In particular, $Q\not\le_{\nil}\wild_1$.
\end{cor}  
\begin{proof}
%\textcolor{red}{I changed this WHOLE proof. I looked through the proof and the 3 isn't doing anything essential, so I deleted it. I had originally put it there to ensure that the coefficient quivers in the sequence were all non-isomorphic, but Condition (3) guarantees this (we only want to consider coefficient quivers up to coefficient isomorphism anyways).}
Put a cyclic ordering $1,\ldots , l$ on the vertices of $C$. Without loss of generality we may assume that $T_{1}$ is not a point. Define $T := T_{1}$, and let $S$ denote the simple nilpotent $T$-representation with $S_1 = [1]$, $S_i = 0$ otherwise and zero maps at each arrow. Furthermore, define $M$ to be the $2$-dimensional indecomposable representation of $T$ constructed as follows: since $T$ is not a point, there is necessarily an arrow $\alpha$ in $T$ adjacent to $1$. Then $M$ is defined by $M_{s(\alpha)} = M_{t(\alpha)} = [1]$, $f_{\alpha} = \id_{[1]}$, and $f_{\beta} = 0$ for all other arrows $\beta \in T_1$.

 To prove the first claim, we show that for each natural number $n$, we can construct a sequence  
 \[
 \tilde{M}^{(0)}, \tilde{M}^{(1)},\ldots , \tilde{M}^{(n-1)} 
 \]
  of nilpotent $\FF_1$-representations of $Q$ such that for each $i \le n-1$, 
\begin{enumerate} 
\item $\tilde{M}^{(i)}$ is indecomposable,
\item $\dim_{\FF_1}(\tilde{M}^{(i)}) = ln$, 
\item$\Res_T(\tilde{M}^{(i)}) \cong M^{\oplus i}\oplus S^{\oplus n-i}$. 
\end{enumerate} 
Conditions (1)-(3) together imply that $\NI_Q(l n) \geq n$, which proves the first claim. The claim $Q \not\le_{\nil} \wild_1$ will then follow from the fact that $\NI_{\wild_1}(n)  = 1$ for all $n \geq 1$.

% Fix a natural number $n$, and let $v_1$ denote the unique $v_1$-colored vertex of $\Gamma_M$.   Since $T$ is not a point, there is an arrow $\alpha$ in $T$ adjacent to $v_1$. We can then construct a $2$-dimensional indecomposable representation $M$ of $T$ such that $M_{s(\alpha)} = M_{t(\alpha)} = [1]$, $f_{\alpha} = \id_{[1]}$, and $f_{\beta} = 0$ for all other arrows $\beta \in T_1$. 

Define $\tilde{M}^{(0)}$ to be the representation $\tilde{M}^{(0)} = I_{[nl,1]}$, where the $C$-representation $I_{[nl,1]}$ is considered as a $Q$-representation via the functor of Lemma \ref{lemma: l.subq}. Note that $\Res_{T}(\tilde{M}^{(0)}) \cong S^{\oplus n }$, so $\tilde{M}^{(0)}$ satisfies Conditions (1)-(3) (with $i = 0$). Let $\tilde{\Gamma}^{(0)}$ denote the coefficient quiver associated of $\tilde{M}^{(0)}$, and let  
\[
\{ 1, l+1, \ldots , l(n-1) + 1 \} 
\]
 denote the first $n$ nonzero elements of $(\tilde{M}^{(0)})_{1}$ as we wind along the path from the start to the end.

There is a unique $1$-colored arrow in $\Gamma_M$ which we denote by $v$. Using the construction in Definition \ref{definition: gluing}, define $\tilde{\Gamma}^{(1)}$ to be the quiver obtained by deleting the vertex $nl$ from the amalgam $\tilde{\Gamma}^{(0)}\sqcup_{1\sim v}\Gamma_M$. Note that $\tilde{\Gamma}^{(1)}$ is connected, and the map $\tilde{\Gamma}^{(1)}\rightarrow Q$ described in Definition \ref{definition: gluing} is a winding: indeed, $\tilde{\Gamma}^{(0)}$ and $\Gamma_M$ have no arrows with the same color, so no subquivers of the form 

\[
\bullet \xrightarrow[]{\beta} \bullet \xleftarrow[]{\beta} \bullet\text{ or } \bullet \xleftarrow[]{\beta} \bullet \xrightarrow[]{\beta} \bullet
\] 

\noindent can be formed by the gluing. This means that $\tilde{\Gamma}^{(1)}$ defines an indecomposable $\FF_1$-representation of $Q$, which we denote by $\tilde{M}^{(1)}$. The construction of $\tilde{\Gamma}^{(1)}$ ensures that Conditions (1)-(3) (with $i = 1$) hold for $\tilde{M}^{(1)}$. For each remaining $1 < i \le n-1$ we construct $\tilde{\Gamma}^{(i)}$ by deleting the vertex $ln-i+1$ from the amalgam $\tilde{\Gamma}_{i-1}\sqcup_{1+l(i-1)\sim v}\Gamma_M$. Note that $1+l(i-1)$ is a vertex in $\tilde{\Gamma}^{(0)}$: since $\tilde{\Gamma}^{(0)}$ and $\Gamma_M$ share no arrows the the same color, the map $\tilde{\Gamma}^{(i)} \rightarrow Q$ described in Definition \ref{definition: gluing} remains a winding. Hence, $\tilde{\Gamma}^{(i)}$ defines an $\FF_1$-representation $\tilde{M}^{(i)}$ which satisfies Conditions (1)-(3) by construction. 
\end{proof}  

%%%%%%%%%%%%%%%%(TAME WILD STUFF)%%%%%%%%%%%%%

\subsection{Quivers of unbounded type over $\FF_1$}

It is known that a quiver $Q$ is tame if and only if $\overline{Q}$ is a simply-laced Dynkin diagram ($\mathbb{A}_n$, $\mathbb{D}_n$, $\mathbb{E}_6$, $\mathbb{E}_7$ or $\mathbb{E}_8$) or extended Dynkin diagram ($\tilde{\mathbb{A}}_n$, $\tilde{\mathbb{D}}_n$, $\tilde{\mathbb{E}}_6$, $\tilde{\mathbb{E}}_7$ or $\tilde{\mathbb{E}}_8$). As a simple consequence of our results, tame quivers are of bounded representation type over $\FF_1$. Quivers which are not tame are called wild; loosely speaking, their representation theory is as difficult as classifying pairs of square matrices up to simultaneous conjugation. Over $\FF_1$, every quiver $Q$ which is not of bounded type admits a fully faithful, indecomposable-preserving embedding of either $\wild_2$ or a pseudotree. If such a $Q$ is not a pseudotree, then Proposition \ref{proposition: rank} also implies that $Q \approx_{\nil} \wild_2$. These observations serve to partition quivers according to the complexity of their $\FF_1$-representations.

The theorem below summarizes our classification results for quivers of bounded representation type.  

\begin{mytheorem} \label{theorem: cycle or tree if and only if bdd rep}
Let $Q$ be a connected quiver. Then $Q$ is of bounded representation type if and only if $Q$ is either a tree or of type $\tilde{\mathbb{A}}_n$. Moreover, $Q$ is a tree quiver if and only if $Q \approx_{\nil} \wild_0$, and $Q$ is of type $\tilde{\mathbb{A}}_n$ if and only if $Q \approx_{\nil} \wild_1$.
\end{mytheorem}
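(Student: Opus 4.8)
The plan is to assemble the structural results already in hand, since the statement is essentially a synthesis of the finite-type classification (Theorem \ref{theorem: finite type}), the cycle-space bound (Corollary \ref{corollary: quiver}), and the two explicit families constructed for cycles and for proper pseudotrees. I would organize the argument in three steps: (i) bounded type $\Rightarrow$ tree or cycle; (ii) tree or cycle $\Rightarrow$ bounded type; and (iii) the two ``moreover'' equivalences, obtained by pinning down the $\approx_{\nil}$-classes of $\wild_0$ and $\wild_1$.

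For step (i), I would suppose $Q$ is of bounded representation type, i.e. $Q \le_{\nil} \wild_1$. Corollary \ref{corollary: quiver} then gives $\dim_{\ZZ_2} H_1(\overline{Q},\ZZ_2) \le 1$. If this dimension is $0$, then $\overline{Q}$ is acyclic and $Q$ is a tree. If it is $1$, then $\overline{Q}$ is a pseudotree with a single cycle, hence by definition either a cycle or a proper pseudotree; the proper-pseudotree corollary shows that a proper pseudotree $Q'$ satisfies $Q' \not\le_{\nil}\wild_1$, contradicting bounded type, so $Q$ must be a cycle. For step (ii), if $Q$ is a tree then Theorem \ref{theorem: finite type} gives finite type, whence $\NI_Q(n) = 0$ for $n \gg 0$ and in particular $Q \le_{\nil}\wild_1$; if $Q$ is a cycle then the cycle-classification theorem gives $Q \approx_{\nil}\wild_1$, so a fortiori $Q \le_{\nil}\wild_1$. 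Together these yield the equivalence ``bounded type $\iff$ tree or cycle.''

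For step (iii), I would first record the baseline growth values $\NI_{\wild_0}(n) = 0$ for $n \ge 2$ and $\NI_{\wild_1}(n) = 1$ for all $n \ge 1$. From the first, $\wild_0 \le_{\nil} Q$ holds trivially for every $Q$, while $Q \le_{\nil}\wild_0$ forces $\NI_Q(n) = 0$ for $n \gg 0$; hence $Q \approx_{\nil}\wild_0$ exactly when $Q$ is of finite type, which by Theorem \ref{theorem: finite type} happens precisely when $Q$ is a tree. For the cycle statement, the cycle-classification theorem already gives $Q \approx_{\nil}\wild_1$ when $Q$ is a cycle. Conversely, if $Q \approx_{\nil}\wild_1$ then $Q \le_{\nil}\wild_1$, so by step (i) $Q$ is a tree or a cycle; it cannot be a tree, since a tree would give $Q \approx_{\nil}\wild_0$ whereas $\wild_0 \not\approx_{\nil}\wild_1$ (indeed $\NI_{\wild_1}(n) = 1 \not\le D\,\NI_{\wild_0}(Cn) = 0$ for large $n$, so $\wild_1 \not\le_{\nil}\wild_0$). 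Thus $Q$ is a cycle.

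Most of this is bookkeeping on top of the earlier lemmas; I expect the main obstacle to be purely organizational, namely verifying that a connected graph with one-dimensional cycle space decomposes cleanly into the cycle-versus-proper-pseudotree dichotomy so that the proper-pseudotree corollary applies. That corollary's super-constant lower bound on $\NI_Q$ is the one genuinely non-formal input, and it is exactly what excludes every non-cycle pseudotree from bounded type.
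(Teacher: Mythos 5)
Your proof is correct and follows exactly the route the paper intends: the theorem is stated there as a summary of the preceding subsection, and your assembly of Corollary \ref{corollary: quiver}, the pseudotree decomposition, the classification of indecomposables over cycle quivers, the proper-pseudotree lower bound, and Theorem \ref{theorem: finite type} is precisely the implicit argument. The explicit bookkeeping in step (iii) with the values $\NI_{\wild_0}$ and $\NI_{\wild_1}$ is a welcome addition, since the paper leaves those equivalences unargued.
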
 

\noindent For every other quiver, we have the following result:

\begin{mytheorem}\label{theorem: not bdd type}
Suppose that $Q$ is a connected quiver that is not of bounded representation type. Then there exists a fully faithful, exact, indecomposable-preserving functor  
\[
\Rep(Q',\FF_1)_{\nil} \rightarrow \Rep(Q,\FF_1)_{\nil}, 
\] 
where $Q'$ is either a proper pseudotree or $\wild_2$. If $Q$ is not a pseudotree, then $Q \approx_{\nil} \wild_2$.
\end{mytheorem}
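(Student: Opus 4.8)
The plan is to reduce the entire statement to a short case analysis on the first Betti number $b = \dim_{\ZZ_2} H_1(\overline{Q},\ZZ_2)$, feeding off the results already in hand. First I would observe that since $Q$ is connected and not of bounded representation type, Theorem \ref{theorem: cycle or tree if and only if bdd rep} immediately rules out $Q$ being a tree or a cycle quiver. Because $Q$ is not a tree, its cycle space is nontrivial, so $b \geq 1$; recall that for a connected graph $b$ equals the circuit rank $|Q_1| - |Q_0| + 1$, that $b = 0$ characterizes trees, and that $b \leq 1$ characterizes pseudotrees.

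The argument then splits according to whether $b \geq 2$ or $b = 1$. In the case $b \geq 2$, the underlying graph carries at least two independent cycles, so $Q$ is not a pseudotree, and Proposition \ref{proposition: rank} applies verbatim: it supplies a fully faithful, exact, indecomposable-preserving functor $\Rep(\wild_2,\FF_1)_{\nil} \rightarrow \Rep(Q,\FF_1)_{\nil}$, and we set $Q' = \wild_2$. The same proposition also yields $Q \approx_{\nil} \wild_2$. Since ``$Q$ is not a pseudotree'' is precisely the condition $b \geq 2$, this simultaneously establishes the final assertion of the theorem, requiring no further work.

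In the remaining case $b = 1$, the graph $\overline{Q}$ has exactly one cycle, so $Q$ is a pseudotree. Combined with the first step, where $Q$ was shown to be neither a tree nor a cycle, this forces $Q$ to be a proper pseudotree: indeed, if every tree $T_v$ attached to the cycle were a single point then $Q$ would equal its cycle $C$, contradicting that $Q$ is not a cycle. Here I would simply take $Q' = Q$ together with the identity functor, which is trivially fully faithful, exact, and indecomposable-preserving.

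The only point demanding genuine care is the bookkeeping that translates the graph-theoretic trichotomy (tree / cycle / pseudotree) into the homological invariant $b$, and the observation that a connected pseudotree which is neither a tree nor a cycle is automatically proper. Beyond this, there is no real obstacle: all of the substantive analytic content is already contained in Proposition \ref{proposition: rank} and Theorem \ref{theorem: cycle or tree if and only if bdd rep}, so the present theorem is in effect a clean repackaging of those two results into the desired dichotomy.
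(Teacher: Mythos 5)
Your proposal is correct and follows essentially the same route as the paper, which leaves this theorem's proof implicit as a combination of Theorem \ref{theorem: cycle or tree if and only if bdd rep}, Proposition \ref{proposition: rank} (equivalently Corollary \ref{corollary: quiver}), and the observation that a connected quiver with $\dim_{\ZZ_2}H_1(\overline{Q},\ZZ_2)=1$ that is neither a tree nor a cycle is a proper pseudotree. The case split on the first Betti number and the use of the identity (or extension-by-zero) functor in the pseudotree case is exactly the intended argument.
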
   

\begin{rmk} 
Note that in this last theorem, you can have embeddings of \emph{both} a proper pseudotree and $\wild_2$. 
\end{rmk} 

We are very close to having described the equivalence classes of connected quivers with respect to the $\approx_{\nil}$ relation. The only equivalence classes which have not been fully described are those of the proper pseudotrees. In particular, it is not clear whether all proper pseudotrees lie in the same equivalence class: if they do, it is not clear whether this class includes $\wild_2$. Hence, we end this section with two questions aimed at future exploration. 

\begin{question} 
Let $Q$ and $Q'$ be proper pseudotrees. Does it follow that $Q \approx_{\nil} Q'$?
\end{question} 

\begin{question} 
Are there any pseudotrees $Q$, with $Q \approx_{\nil} \wild_2$?
\end{question}

%%%%%%%%%%%%%%%%%%%%(HALL ALGEBRAS OF L_n)%%%%%%%%%%%%
	
\section{Hall algebras of  full subcategories $\Rep(Q,\FF_1)_{\nil}$} \label{section: hall algebra of full subcategories}

%\begin{mythm}(\cite[Theorem 6 and Remark 2]{szczesny2011representations})\label{theorem: Matt's theorem on Hall algebra}
%Let $Q$ be a quiver and $\mathcal{C}=\Rep(Q,\FF_1)_{\nil}$. Then, the Hall algebra $H_\mathcal{C}$ is a graded, connected, and co-commutative Hopf algebra. Furthermore, $H_\mathcal{C}$ is isomorphic to the universal enveloping algebra $\mathbf{U}(\mathfrak{n}_Q)$ of the Lie algebra $\mathfrak{n}_Q$ spanned by $\delta_{[M]}$ for $M \in \textrm{Iso}(\mathcal{C})$ indecomposable.\footnote{In $H_\mathcal{C}$, primitives are precisely indecomposables.}
%\end{mythm}

In this section, we explore Hall algebras arising from full subcategories of $\Rep(Q,\FF_1)_{\nil}$ focusing on the case when $Q=\wild_n$. We then relate the Hall algebra of skew shapes as in \cite{szczesny2018hopf} to the coalgebra of a certain full subcategory of $\Rep(\wild_n,\FF_1)_{\nil}$. 

Let $Q$ be a connected quiver and $H_\textrm{$Q$,nil}$ be the Hall algebra of $\Rep(Q,\FF_1)_{\nil}$ - we know from \cite[Theorem 6]{szczesny2011representations} that $H_\textrm{$Q$,nil}$ exists and is given by
\[
H_\textrm{$Q$,nil} \simeq \mathbf{U}(\mathfrak{cq}_n),
\]
where the Lie algebra $\mathfrak{cq}_n$ is spanned by $\{\delta_{[\mathbb{V}]}\}$ for indecomposable nilpotent $\FF_1$-representations $\mathbb{V}$ and $\mathbf{U}(\mathfrak{cq}_n)$ is the universal enveloping algebra of $\mathfrak{cq}_n$. Moreover, it follows from Lemma \ref{l.basicprop} and Proposition \ref{proposition: colored quiver} that there is one-to-one correspondence between indecomposable nilpotent $\FF_1$-representations $\mathbb{V}$ of $Q$ and connected, acyclic windings $c:\Gamma \to Q$. Therefore, we have the following theorem. 

\begin{mythm}\label{theorem: hall algebra for $L_n$}
With the same notation as above, the Hall algebra $H_\textrm{$Q$,nil}$ is isomorphic to the enveloping algebra $\mathbf{U}(\mathfrak{cq}_n)$. The Lie algebra $\mathfrak{cq}_n$ may be identified with 
\[
\mathfrak{cq}_n=\{\delta_{\mathbb{V}_{\Gamma}} \mid \textrm{ connected, acylic windings $c:\Gamma \to Q$ }\}
\]
with Lie bracket given by
\begin{equation}\label{eq: Lie bracket for $H_C$}
[\delta_{\mathbb{V}_{\Gamma}},\delta_{\mathbb{V}_{\Gamma'}}]=\delta_{\mathbb{V}_{\Gamma}}*\delta_{\mathbb{V}_{\Gamma'}}-\delta_{\mathbb{V}_{\Gamma'}}*\delta_{\mathbb{V}_{\Gamma}}.
\end{equation}
\end{mythm}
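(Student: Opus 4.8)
The plan is to assemble three ingredients already available: Szczesny's Hopf-algebraic description of the Hall algebra, the Milnor--Moore theorem, and the combinatorial dictionary between indecomposables and colored quivers furnished by Lemma \ref{l.basicprop} and Proposition \ref{proposition: colored quiver}. The substance of the theorem lies entirely in reindexing the Lie algebra of primitives by colored quivers; the Hopf-algebraic part is a direct transcription of Theorem \ref{theorem: Szczesny Hall algebra} to the nilpotent setting.

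First I would invoke the nilpotent analogue of Theorem \ref{theorem: Szczesny Hall algebra}, available via \cite[Remark 2]{szczesny2011representations}: the Hall algebra $H_{Q,\nil}$ is a graded, connected, cocommutative Hopf algebra. Since $\Rep(Q,\FF_1)_{\nil}$ has only finitely many isomorphism classes of objects in each dimension, refining the $K_0$-grading by total dimension gives a $\ZZ_{\geq 0}$-grading with finite-dimensional graded pieces. The Milnor--Moore theorem (over $\mathbb{C}$) then yields a canonical isomorphism $H_{Q,\nil}\simeq \mathbf{U}(\mathfrak{cq}_n)$, where $\mathfrak{cq}_n := P(H_{Q,\nil})$ is the Lie algebra of primitive elements with bracket inherited from the associative product.

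Next I would identify the primitives explicitly. From the coproduct formula \eqref{eq: hall coprod}, for any isomorphism class $[R]$ in $\Rep(Q,\FF_1)_{\nil}$ one computes $\Delta(\delta_{[R]}) = \sum \delta_{[M]}\otimes \delta_{[N]}$, the sum over ordered pairs with $M\oplus N\cong R$. By the Krull--Schmidt property of $\Rep(Q,\FF_1)_{\nil}$, the element $\delta_{[R]}$ is primitive exactly when the only such decompositions are trivial, i.e.\ precisely when $R$ is indecomposable; a degree-by-degree comparison (the basis $\{\delta_{[R]}\}$ is indexed by multisets of indecomposables via Krull--Schmidt, matching a PBW basis of $\mathbf{U}(\mathfrak{cq}_n)$) shows there are no further primitives, recovering Szczesny's identification of $\mathfrak{cq}_n$ with the span of indecomposable classes. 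Finally, Lemma \ref{l.basicprop}(6) together with Proposition \ref{proposition: colored quiver} gives a bijection between indecomposable objects of $\Rep(Q,\FF_1)_{\nil}$ and connected colored quivers $\Gamma$ satisfying conditions (1)--(3); reindexing the basis of $\mathfrak{cq}_n$ along this bijection produces the claimed description $\mathfrak{cq}_n = \{\delta_{\mathbb{V}_\Gamma}\}$.

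The one point requiring (routine) verification is that the displayed commutator \eqref{eq: Lie bracket for $H_C$} lands back in $\mathfrak{cq}_n$: each product $\delta_{\mathbb{V}_\Gamma}*\delta_{\mathbb{V}_{\Gamma'}}$ generally has decomposable, hence non-primitive, summands, yet the difference of the two products is primitive since the commutator of two primitive elements of any bialgebra is again primitive (a one-line computation with $\Delta$, where the cross terms cancel). This closes $\mathfrak{cq}_n$ under the bracket and identifies it with the primitive Lie algebra of $H_{Q,\nil}$, completing the proof. I do not expect a genuine obstacle here; the only real content is the combinatorial reparametrization of the primitives, which is exactly what the earlier sections supply.
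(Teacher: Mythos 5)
Your proposal is correct and follows essentially the same route as the paper: the paper simply cites Szczesny's Theorem 6 (the Milnor--Moore identification of $H_{Q,\nil}$ with the enveloping algebra of the Lie algebra spanned by indecomposable classes) and then reindexes the indecomposables by connected colored quivers via Lemma \ref{l.basicprop}(6) and Proposition \ref{proposition: colored quiver}. Your extra verifications --- identifying the primitives via the coproduct and Krull--Schmidt, and checking that the commutator stays in the span of the indecomposables --- merely unpack what is contained in that citation.
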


Note that from the algebra structure \eqref{eq: hall product} of $H_\textrm{$Q$,nil}$, we have
\begin{equation}
\delta_{\mathbb{V}_{\Gamma}}*\delta_{\mathbb{V}_{\Gamma'}} = \sum_{\Gamma''} \mathbf{a}_{\Gamma,\Gamma'}^{\Gamma''}\delta_{\mathbb{V}_{\Gamma''}}
\end{equation}
For connected, acylic windings $c:\Gamma \to Q$, the product $\delta_{\mathbb{V}_{\Gamma}}*\delta_{\mathbb{V}_{\Gamma'}}$can be intuitively considered to involve all ways of stacking the coefficient quiver $\Gamma'$ on top of the coefficient quiver $\Gamma$ to obtain another coefficient quiver $\Gamma''$\footnote{See Lemma \ref{lem: quotient rep}~(3).} as well as disjoint union. Construction \ref{construction: GammaFM} shows one of such ``stacking'' operation when $Q=\wild_n$ (see also a short exact sequence \eqref{eq: exact seq gluing}). \par \medskip

Now, we consider the case when $Q=\wild_n$. We first prove that the symmetric group $S_n$ on $n$ letters induces a natural automorphism of $H_\textrm{$\wild_n$,nil}$. To be precise, let $\mathbb{V}=(V,f_1,\dots,f_n) \in \Rep(\wild_n,\FF_1)_{\nil}$. For each $\sigma \in S_n$, we define the following representation
\begin{equation}\label{eq: $S_n$-action}
\mathbb{V}^\sigma=(V,g_1,\dots,g_n),\textrm{ where } g_i:=f_{\sigma(i)}.
\end{equation}
In terms of the associated coefficient quiver, $\Gamma_{\mathbb{V}^\sigma}$ is obtained from $\Gamma_{\mathbb{V}}$ by switching $\alpha$-colored arrows with $\sigma(\alpha)$-colored arrows. The coefficient quiver $\Gamma_{\mathbb{V}^\sigma}$ is acylic, and hence $\mathbb{V}^\sigma$ is indeed a nilpotent representation of $\wild_n$ over $\FF_1$. Furthermore, it is clear that $\mathbb{V}$ is indecomposable if and only if $\mathbb{V}^\sigma$ is indecomposable since indecomposability is defined in terms of the underlying graph of a coefficient quiver. 

\begin{pro}
	With the same notation as above, each $\sigma \in S_n$ as in \eqref{eq: $S_n$-action} induces a Hopf algebra automorphism of the Hall algebra $H_\textrm{$\wild_n$,nil}$ of $\Rep(\wild_n,\FF_1)_{\nil}$. In particular, we have a natural group homomorphism $S_n \to \textrm{Aut}(H_\textrm{$\wild_n$,nil})$. 	
\end{pro}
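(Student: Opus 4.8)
The plan is to realize $\sigma$ as an exact automorphism of the category $\Rep(\wild_n,\FF_1)_{\nil}$ and then transport it through the Hall algebra construction, which is functorial with respect to exact equivalences. First I would promote the assignment $\mathbb{V} \mapsto \mathbb{V}^\sigma$ to a functor $\sigma_*$. Since $\wild_n$ has a single vertex, a morphism $\phi : \mathbb{V} \to \mathbb{W}$ is a single $\FF_1$-linear map $\phi : V \to W$ satisfying $\phi f_i = g_i \phi$ for all $i$; permuting the loops merely relabels these intertwining relations, so the same $\phi$ is a morphism $\mathbb{V}^\sigma \to \mathbb{W}^\sigma$. Setting $\sigma_*(\phi) = \phi$ thus defines a functor (landing in $\Rep(\wild_n,\FF_1)_{\nil}$ by the nilpotency already observed before the statement) whose inverse is $(\sigma^{-1})_*$, so $\sigma_*$ is an isomorphism of categories.

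Next I would check that $\sigma_*$ preserves the exact structure used to build the Hall algebra. The key point is that a subrepresentation $\mathbb{U} \subseteq \mathbb{V}$ is precisely a pointed subset $U \subseteq V$ invariant under every $f_i$, a condition untouched by reindexing; hence $\mathbb{U} \mapsto \mathbb{U}^\sigma$ is an isomorphism of subobject lattices, and one verifies directly from the definition of quotient that $(\mathbb{V}/\mathbb{U})^\sigma \cong \mathbb{V}^\sigma/\mathbb{U}^\sigma$. It follows that $\sigma_*$ sends short exact sequences (in the $\ker = \operatorname{coker}$ sense used here) to short exact sequences and commutes with direct sums, $(\mathbb{V} \oplus \mathbb{W})^\sigma \cong \mathbb{V}^\sigma \oplus \mathbb{W}^\sigma$. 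Moreover $\operatorname{Aut}(\mathbb{V})$ and $\operatorname{Aut}(\mathbb{V}^\sigma)$ consist of exactly the same $\FF_1$-linear automorphisms of $V$, since the commutation conditions defining them differ only by a relabeling of the $f_i$, so $a_{\mathbb{V}} = a_{\mathbb{V}^\sigma}$.

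These observations let me define the $\mathbb{C}$-linear map $\Psi_\sigma : H_\textrm{$\wild_n$,nil} \to H_\textrm{$\wild_n$,nil}$ by $[M] \mapsto [M^\sigma]$ (well-defined on isomorphism classes because $\sigma_*$ is a functor) and verify it respects all structure. The subobject-lattice isomorphism gives a bijection between the subrepresentations $L \subseteq R$ satisfying $L \cong N$ and $R/L \cong M$ and the subrepresentations $L^\sigma \subseteq R^\sigma$ satisfying $L^\sigma \cong N^\sigma$ and $R^\sigma/L^\sigma \cong M^\sigma$, whence $\mathbf{a}^{R}_{M,N} = \mathbf{a}^{R^\sigma}_{M^\sigma,N^\sigma}$ and $\Psi_\sigma$ preserves the product \eqref{eq: hall product}; compatibility with $\oplus$ shows it preserves the coproduct \eqref{eq: hall coprod} and the counit. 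As $\Psi_\sigma$ is invertible (with inverse $\Psi_{\sigma^{-1}}$) and preserves the grading by dimension, it is an automorphism of the connected graded bialgebra $H_\textrm{$\wild_n$,nil}$; since the antipode of a connected graded bialgebra is uniquely determined by the bialgebra structure (Theorem \ref{theorem: Szczesny Hall algebra}), $\Psi_\sigma$ is automatically a Hopf algebra automorphism. Finally, a direct computation gives $(\mathbb{V}^\sigma)^\tau = \mathbb{V}^{\sigma\tau}$, so that $\Psi_\tau \circ \Psi_\sigma = \Psi_{\sigma\tau}$; composing with inversion on $S_n$ (equivalently, using $S_n \cong S_n^{\mathrm{op}}$) then yields the claimed group homomorphism $S_n \to \operatorname{Aut}(H_\textrm{$\wild_n$,nil})$.

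I expect the principal difficulty to be bookkeeping rather than conceptual: one must take care that $\sigma_*$ genuinely respects admissible monomorphisms and epimorphisms (and the $\ker = \operatorname{coker}$ convention) so that the structure constants $\mathbf{a}^{R}_{M,N}$ are preserved verbatim, and one must keep the composition order straight so as to land on a homomorphism rather than an anti-homomorphism. Both become routine once the subobject-lattice isomorphism is in place, which is the heart of the argument.
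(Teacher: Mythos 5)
Your proposal is correct and follows essentially the same route as the paper's proof: define the map on isomorphism classes by $[M]\mapsto[M^\sigma]$, get compatibility with the coproduct from Krull--Schmidt/direct sums, and get compatibility with the product from the bijection of short exact sequences (equivalently, of subobjects) induced by $\sigma$. Your additional care about $a_M = a_{M^\sigma}$ and about the composition law $(\mathbb{V}^\sigma)^\tau = \mathbb{V}^{\sigma\tau}$ forcing a twist by inversion to get an honest homomorphism is a point the paper silently glosses over, and is a welcome refinement rather than a divergence.
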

\begin{proof}
	For $\sigma \in S_n$, let $\phi_\sigma: H_\textrm{$\wild_n$,nil} \to H_\textrm{$\wild_n$,nil}$ sending an indecomposable $\mathbb{V}$ to $\mathbb{V}^\sigma$ and extend to all of $H_\textrm{Q,nil}$. It is clear that $\phi_\sigma$ is an isomorphism of coalgebras from the Krull-Schmidt property proven in \cite[Theorem 4]{szczesny2012hall}. Therefore, we only have to prove that $\phi_\sigma$ is a map of algebras. But, this is also straightforward since for any indecomposables $M,N \in \Rep(\wild_n,\FF_1)_{\nil}$ and a representation $R$ of $\wild_n$ over $\FF_1$, 
	\[
	0 \to N \to R \to M \to 0
	\] 
	is exact if and only if the following is exact
	\[
	0 \to N^\sigma \to R^\sigma \to M^\sigma \to 0.
	\] 
\end{proof}

Now, we introduce the notion of a path monoid. By a monoid, we will always mean a multiplicative monoid $M$ (not necessarily commutative) with an absorbing element $0$, i.e., $a\cdot 0 =0 \cdot a =0$ for all $a \in M$. 

\begin{mydef}(Path monoid)\label{definition: quiver monoid}
Let $Q$ be a quiver. The \emph{path monoid} $M_Q$ associated to $Q$ is defined by $M_Q:=F/\sim$, where $F$ is the free monoid generated by $\{e_i\}_{i \in Q_0}\cup \{\alpha\}_{\alpha \in Q_1}$, and $\sim$ is a congruence relation generated by the following:
\begin{equation}
e_i^2\sim e_i, \quad e_ie_j\sim 0~ (i\neq j), \quad e_{t(\alpha)}\alpha\sim\alpha e_{s(\alpha)}\sim\alpha,
\end{equation}
and
\[
\alpha\beta \sim 0 \textrm{ whenever $t(\alpha) \neq s(\beta)$.}
\]
\end{mydef}

In the classical setting, the category of representations of a finite quiver $Q$ over a field $k$ is equivalent to the category of left $kQ$-modules, where $kQ$ is the path algebra associated to $Q$. In the case of representations over $\FF_1$, it seems to be difficult to obtain a similar results since there is no ``partition of unity by projections'' for monoids due to the lack of additive structure. Nonetheless, we have the following. 

\begin{pro}\label{proposition: fully faithful functor}
Let $Q$ be a quiver, then there exists a fully faithful functor from the category of representations of $Q$ over $\FF_1$ to the category of finite left $M_Q$-modules. 
\end{pro}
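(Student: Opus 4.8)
The plan is to imitate the classical equivalence $\mathrm{Rep}(Q,k)\simeq kQ\text{-mod}$, with the quiver monoid $M_Q$ playing the role of the path algebra. Define a functor $\Theta$ on objects by sending $\mathbb{V}=(V_i,f_\alpha)$ to the $\FF_1$-vector space $S(\mathbb{V}):=\bigoplus_{i\in Q_0}V_i$ (the wedge of the pointed sets $V_i$, with all basepoints identified), equipped with the $M_Q$-action determined by the generators as follows: $e_i$ acts as the projection of $S(\mathbb{V})$ onto the summand $V_i$ (sending every other summand to $0$), and each $\alpha\in Q_1$ acts by $f_\alpha$ on the summand $V_{s(\alpha)}$ and by $0$ on all other summands. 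First I would check that this assignment respects the defining congruence of $M_Q$: idempotency $e_i^2\sim e_i$ is just the idempotency of a projection; orthogonality $e_ie_j\sim 0$ for $i\neq j$ holds because the images of distinct projections meet only in $0$; and the absorption relations $e_{t(\alpha)}\alpha\sim\alpha e_{s(\alpha)}\sim\alpha$ follow because $f_\alpha$ maps $V_{s(\alpha)}$ into $V_{t(\alpha)}$ and annihilates the remaining summands. Since each $V_i$ is finite, $S(\mathbb{V})$ is a finite left $M_Q$-module.

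On morphisms, send $\Phi=(\phi_i)_{i\in Q_0}\colon\mathbb{V}\to\mathbb{W}$ to $\Theta(\Phi):=\bigoplus_i\phi_i$. The commuting squares defining a morphism of representations are exactly what is needed for $\Theta(\Phi)$ to intertwine the action of every $\alpha$, while compatibility with each $e_i$ is automatic because $\Theta(\Phi)$ is defined summandwise; preservation of identities and composites is then immediate, so $\Theta$ is a functor. Faithfulness is equally immediate: the component $\phi_i$ is recovered from $\Theta(\Phi)$ as its restriction to the summand $V_i$, i.e.\ as $e_i\cdot\Theta(\Phi)(-)$, so distinct morphisms have distinct images.

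The substance of the argument is fullness. Given a morphism $\psi\colon S(\mathbb{V})\to S(\mathbb{W})$ in the module category, I would first use $e_i$-equivariance to show that $\psi$ is block diagonal: for nonzero $x$ lying in the summand $V_i$ we have $x=e_i\cdot x$, hence $\psi(x)=\psi(e_i\cdot x)=e_i\cdot\psi(x)\in W_i\cup\{0\}$, so that $\psi$ restricts to a pointed map $\phi_i\colon V_i\to W_i$. Equivariance under each arrow $\alpha$ then reads $\phi_{t(\alpha)}f_\alpha=g_\alpha\phi_{s(\alpha)}$ on $V_{s(\alpha)}$, which is precisely the commuting-square condition; thus the family $(\phi_i)$ assembles into a morphism of representations with $\psi=\Theta((\phi_i))$, giving the bijection $\Hom(\mathbb{V},\mathbb{W})\cong\Hom(S(\mathbb{V}),S(\mathbb{W}))$.

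The main obstacle is ensuring that each recovered $\phi_i$ is genuinely a morphism in $\Vect(\FF_1)$, i.e.\ injective away from its kernel, since an arbitrary equivariant pointed map need not be: already for $\wild_1$ a nilpotent representation $(V,f)$ with $f$ collapsing admits an equivariant pointed self-map of $S(\mathbb{V})$ that merges two nonzero elements and so fails to be $\FF_1$-linear. I would resolve this by stating at the outset the natural convention for the target category: because the underlying objects of finite left $M_Q$-modules are themselves $\FF_1$-vector spaces, a morphism of $M_Q$-modules is taken to be an $M_Q$-equivariant \emph{$\FF_1$-linear} map (a morphism in $\Vect(\FF_1)$ that intertwines the action), so that the injective-off-kernel condition is built into the target. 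Under this convention each $\phi_i$ is $\FF_1$-linear by hypothesis, the bijection of the previous paragraph upgrades to the desired fullness, and the proof is complete.
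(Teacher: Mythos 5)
Your proof is correct and follows essentially the same route as the paper's: the same direct-sum construction with $e_i$ acting as the projection onto $V_i$ and $\alpha$ acting via $f_\alpha$, the same verification of the defining congruence, and the same block-diagonal argument (via $e_i$-equivariance) for fullness. The one place you go beyond the paper is in explicitly flagging that morphisms of $M_Q$-modules must be taken to be $\FF_1$-linear (injective off the kernel) for fullness to hold; the paper leaves this convention implicit when it asserts that $\phi_i := f|_{V_i}$ assembles into a morphism of representations.
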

\begin{proof}
Let $\mathbb{V}=(V_i,f_\alpha)$ be a representation of $Q$. Consider the following $\FF_1$-vector space:
\[
V:=\bigoplus_{i \in Q_0}V_i
\]
We first claim that $V$ is a left $M_Q$-module. In fact, for each $i \in Q_0$, let $g_i:V \to V$ be the composition $V \twoheadrightarrow V_i \hookrightarrow V$. Similarly, for each $\alpha \in Q_1$, we let $g_\alpha:V \to V$ be the following composition:
\[
V \twoheadrightarrow V_{s(\alpha)}\overset{f_\alpha}\longrightarrow V_{t(\alpha)} \hookrightarrow V.
\]
One can easily check that
\begin{equation}\label{eq: relations}
g_i^2= g_i, \quad g_ig_j= 0~ (i\neq j), \quad g_{t(\alpha)}g_\alpha=g_\alpha g_{s(\alpha)}=g_\alpha.
\end{equation}
Let $M_Q=F/\sim$ as in Definition \ref{definition: quiver monoid}. It follows from \eqref{eq: relations} that we have a monoid morphism $M_Q \to \textrm{End}(V)$ sending $\overline{e_i}$ to $g_i$ and $\overline{\alpha}$ to $g_\alpha$, where $\overline{e_i}$ and $\overline{\alpha}$ are the equivalence classes in $M_Q$. In particular, $V$ is a left $M_Q$-module. We let $\mathbf{F}(\mathbb{V})$ denote this left $M_Q$-module.

Let $\Phi:\mathbb{V}=(V_i,f_\alpha) \to \mathbb{W}=(W_i,g_\alpha)$ be a morphism of representations of $Q$ over $\FF_1$. In other words, we have a family of maps $\phi_i:V_i \to W_i$, for $i \in Q_0$, which satisfy a certain compatibility condition. Consider the following:
\[
\mathbf{F}(\Phi):\bigoplus_{i \in Q_0}V_i \to \bigoplus_{i \in Q_0}W_i, \quad v_i \mapsto \mathbf{F}(\Phi)(v_i)=\phi_i(v_i), %elements of a direct sum of FF_1-spaces aren't n-tuples, so I figured the parentheses were confusing in this case
\] 
where $v_i \in V_i$. Then $\mathbf{F}(\Phi)$ is a morphism of left $M_Q$-modules, that is, we have
\begin{equation}
\mathbf{F}(\Phi)(\overline{e_i}\cdot v)=\overline{e_i}\cdot \mathbf{F}(\Phi)(v), \quad \mathbf{F}(\Phi)(\overline{\alpha}\cdot v)=\overline{\alpha}\cdot \mathbf{F}(\Phi)(v),
\end{equation}
in particular, $\mathbf{F}$ defines a functor from the category of representations of $Q$ over $\FF_1$ to the category of left $M_Q$-modules.

Next, we claim that $\mathbf{F}$ is fully faithful. In fact, from the definition, it is clear that $\mathbf{F}$ is faithful. For fullness, let $f: \mathbf{F}(\mathbb{V}) \to \mathbf{F}(\mathbb{W})$ be a morphism of left $M_Q$-modules. For $v=(v_i)_{i \in Q_0} \in \mathbf{F}(\mathbb{V})$, we have $f(\overline{e_i}\cdot v)=\overline{e_i}\cdot f(v)$. It follows that $f(V_i) \subseteq W_i$. Let $\phi_i:=f|_{V_i}$. Then, $\Phi=(\Phi_i) \in \Hom(\mathbb{V},\mathbb{W})$. In fact, we only have to check the compatibility condition, i.e., the commutativity of the following diagram: for each $i \in Q_0$: 
\begin{equation}\label{eq: commutative diagram maps}
\begin{tikzcd}[row sep=large, column sep=1.5cm]
V_{s(\alpha)}\arrow{r}{\phi_{s(\alpha)}}\arrow{d}{f_\alpha}
& W_{s(\alpha)} \arrow{d}{g_\alpha} \\
V_{t(\alpha)} \arrow{r}{\phi_{t(\alpha)}} 
& W_{t(\alpha)}
\end{tikzcd}
\end{equation}
But, this is clear since $f$ is a morphism of left $M_Q$-modules. 
\end{proof}

%In view of Corollary \ref{corollary: corollary for module correspondence}, the following is what one may expect as skew shapes correspond to certain $\mathbb{F}_1\langle x_1,\dots,x_n\rangle$-modules (called the type-$\alpha$ in \cite{szczesny2018hopf}) and this is a special case of representations of $\wild_n$ over $\mathbb{F}_1$.

The following definition is introduced by Szczesny.  

\begin{mydef}\cite[Definition 2.2.1]{szczesny2018hopf}
	Let $A$ be a commutative monoid, $M$ be an $A$-module. $M$ is said to be \emph{type-$\alpha$} if the following holds: for any $x\in A$, and $a,b \in M$,
	\[
	x\cdot a = x \cdot b \iff a=b \textrm{ or } x\cdot a = x \cdot b=0.
	\]
\end{mydef}

The following is a key to link coefficient quivers and certain combinatorial objects in \cite{szczesny2018hopf}.

\begin{cor}\label{corollary: corollary for module correspondence}
Let $Q=\wild_n$.
\begin{enumerate}
\item 
$M_Q$ is the free monoid generated by $\{x_1,\dots,x_n\}$. 
\item 
The category $\emph{Rep}(Q,\FF_1)$ is equivalent to the category of finite left $M_Q$-modules of type-$\alpha$. In particular, the full subcategory $\mathcal{C}$ consisting of representations $\mathbb{V}=(V,f_1,\dots,f_n)$ such that $f_if_j=f_jf_i$ is equivalent to the category of $\FF_1\langle x_1,\dots,x_n\rangle$-modules, where $\FF_1\langle x_1,\dots,x_n\rangle$ is a free commutative monoid generated by $\{x_1,\dots,x_n\}$.
\end{enumerate}	
\end{cor}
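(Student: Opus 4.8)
The plan is to deduce both statements from the fully faithful functor $\mathbf{F}$ of Proposition \ref{proposition: fully faithful functor} by computing $M_Q$ explicitly and then pinning down the essential image of $\mathbf{F}$ in the case $Q = \wild_n$.

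For part (1), I would first specialize Definition \ref{definition: quiver monoid} to $\wild_n$. There is a single vertex $\ast$ with $n$ loops $\alpha_1,\ldots,\alpha_n$, so the orthogonality relations $e_ie_j\sim 0$ are vacuous and the only surviving relations are $e_\ast^2 \sim e_\ast$ and $e_\ast\alpha_i \sim \alpha_i e_\ast \sim \alpha_i$. I would then run a normal-form (word reduction) argument: using these relations, every nonzero word absorbs all of its $e_\ast$-factors and reduces to a (possibly empty) word in the $\alpha_i$ alone, with $e_\ast$ functioning as a two-sided identity. Setting $x_i := \overline{\alpha_i}$ and identifying the idempotent $e_\ast$ with the monoid unit (as is standard for the path monoid of a one-vertex quiver), this exhibits $M_{\wild_n}\cong\FF_1\langle x_1,\ldots,x_n\rangle$, the free monoid on $n$ generators with absorbing $0$. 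To make this rigorous I would give the inverse isomorphism via the universal property: $x_i\mapsto\overline{\alpha_i}$ extends to a homomorphism from the free monoid, and the normal form shows it is a bijection.

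For the first equivalence in part (2), I would identify the essential image of $\mathbf{F}$ with the finite type-$\alpha$ modules. Under $\mathbf{F}$, a representation $\mathbb{V}=(V,f_1,\ldots,f_n)$ is sent to the pointed set $V$ on which $x_i$ acts by $f_i$. Since every element of $M_{\wild_n}$ acts as a composite of the $\FF_1$-linear maps $f_i$, and a composite of $\FF_1$-linear maps is again $\FF_1$-linear (injective away from its kernel, by Definition \ref{definition: $F_1$-vectorspace}), each element of $M_{\wild_n}$ acts on $V$ by an $\FF_1$-linear map; this is precisely the type-$\alpha$ condition, so $\mathbf{F}(\mathbb{V})$ is always a finite type-$\alpha$ module. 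Conversely, given a finite type-$\alpha$ $M_{\wild_n}$-module $S$, I would set $V:=S$ and let $f_i$ be the action of $x_i$; applying the type-$\alpha$ condition to $x=x_i$ says exactly that $f_i$ is injective on the complement of $f_i^{-1}(0)$, i.e. $f_i$ is $\FF_1$-linear, so $(V,f_1,\ldots,f_n)\in\Rep(\wild_n,\FF_1)$ and $\mathbf{F}$ recovers $S$. Combined with the full faithfulness already established in Proposition \ref{proposition: fully faithful functor}, this yields the equivalence. For the subcategory $\mathcal{C}$, I would observe that $x_ix_j$ acts as $f_i\circ f_j$ and $x_jx_i$ as $f_j\circ f_i$, so $\mathbb{V}\in\mathcal{C}$ iff its module structure factors through the quotient of $M_{\wild_n}$ by the relations $x_ix_j=x_jx_i$, namely the free commutative monoid $\FF_1\langle x_1,\ldots,x_n\rangle$; restricting the equivalence then identifies $\mathcal{C}$ with the finite type-$\alpha$ modules over this commutative monoid.

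The one genuinely delicate point is the bookkeeping around the vertex idempotent $e_\ast$: a priori $e_\ast$ is a generator of $M_{\wild_n}$ distinct from the monoid unit, and I must check that it is forced to act as the identity on every module in the image of $\mathbf{F}$ (equivalently, that collapsing $e_\ast$ to $1$ in the single-vertex case is harmless), so that the essential image is cut out by the type-$\alpha$ condition and nothing further. The remaining ingredients — the normal-form computation, the compatibility of $\mathbf{F}$ with the module action, and the stability of $\FF_1$-linearity under composition — are routine.
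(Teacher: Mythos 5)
Your proposal is correct and follows essentially the same route as the paper: both identify $M_{\wild_n}$ with the free monoid on $n$ generators, verify that $\mathbf{F}(\mathbb{V})$ is type-$\alpha$ via the $\FF_1$-linearity of (composites of) the maps $f_i$, and establish essential surjectivity onto the type-$\alpha$ modules by reading the $f_i$ off the action of the generators, then invoke the full faithfulness from Proposition \ref{proposition: fully faithful functor}. You are, if anything, more careful than the paper on two points it leaves implicit: the identification of the idempotent $\overline{e_\ast}$ with the monoid unit in part (1) (which the paper dismisses as ``clear''), and the fact that the type-$\alpha$ condition must be checked for arbitrary words in the generators rather than only for powers of a single $f_i$.
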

\begin{proof}
The first assertion is clear. For the second assertion, let $A:=\FF_1\langle x_1,\dots,x_n\rangle$ and $\mathbb{V}=(V,f_1,\dots,f_n)$ be an $\FF_1$-representation of $Q$. Since each $f_i:V \to V$ is an $\FF_1$-linear map, the following holds: for any $a,b \in V-\{0\}$, $i \in \{1,\dots,n\}$, a positive integer $k$,
\begin{equation}\label{eq: type alpha}
f_i^k(a)=f_i^k(b)\iff a=b \textrm{ or } f_i^k(a)=f_i^k(b)=0.
\end{equation}
From this, one can easily see that $\mathbf{F}(\mathbb{V})$ is a type-$\alpha$ $A$-module. Hence, it is enough to prove that the functor $\mathbf{F}$ in Proposition \ref{proposition: fully faithful functor} is essentially surjective onto the full subcategory whose objects are type-$\alpha$ modules. Let $V$ be a left $M_Q$-module which is type-$\alpha$. For each $\alpha \in Q_1$, we have an $\FF_1$-linear map $f_\alpha: V \to V$ since $\overline{\alpha}(V) \subseteq V$. Therefore, we have a representation $\mathbb{V}=(V,f_\alpha)$ of $\wild_n$. One can easily check that $\mathbf{F}(\mathbb{V})=V$.  
\end{proof}

\begin{rmk}
Let $Q$ be a quiver and $k$ be a field. Then,
\[
kQ \simeq k\otimes_{\mathbb{F}_1}M_Q,\quad  
\]
where $kQ$ is the path algebra. On the other hand, with the forgetful functor $\mathcal{U}$ in Example \ref{example: adjunction for scalar extensions}, the path monoid $M_Q$ is a sub-monoid of $\mathcal{U}(kQ)$. 
\end{rmk}

Next, we illustrate how Theorem \ref{theorem: hall algebra for $L_n$} is analogous to Theorem 6.0.1 of \cite{szczesny2018hopf}, under which our ``stacking operation'' of coefficient quivers becomes a ``stacking operation'' of combinatorial objects, called skew shapes. In this regard, one may view coefficient quivers as a certain non-commutative generalization of skew shapes.

Let's first recall the definition of skew shapes. We first define a canonical partial order on $\mathbb{Z}^n$ as follows:
\begin{equation}\label{eq: partion order on Zn}
(a_1,\dots,a_n) \leq (b_1,\dots,b_n) \iff a_i\leq b_i \quad \forall i=1,\dots,n.
\end{equation}
A skew shape is a sub-poset\footnote{By a sub-poset, we mean a subset with the induced partial order.} of $\mathbb{Z}^n$ which has been introduced by Szczesny to study modules over $\mathbb{F}_1\langle x_1,\dots,x_n\rangle$ satisfying certain conditions.  

\begin{mydef}\cite[Definition 5.1.1]{szczesny2018hopf}
	Let $S \subseteq \mathbb{Z}^n$ be a sub-poset. 
	\begin{enumerate}
		\item 
		$S$ is said to be an \emph{n-dimensional skew shape} if $S$ is finite and convex.\footnote{Note that ``convex'' is in the context of posets; it means that for $a,c \in S$ if $a\leq b\leq c$, then $b \in S$. We will omit ``n-dimensional'' whenever there is no possible confusion.}	
		\item 
		A skew shape $S$ is said to be \emph{connected} if $S$ is connected as a poset.\footnote{By this we mean that for any $a,b \in S$, there exists $a=c_1,\dots,c_r=b \in S$ such that $c_i$ and $c_{i+1}$ are comparable for all $i=1,\dots,r-1$.}
		\item 
		Skew shapes $S$ and $S'$ are said to be \emph{equivalent} if there is $\mathbf{x} \in \mathbb{Z}^n$ such that $S=S'+\mathbf{x}$. In other words, if $S$ is a translation of $S'$.\footnote{Clearly this defines an equivalence relation.} 	
	\end{enumerate}
\end{mydef}

\begin{myeg}
Let $S=\{(1,0),(2,0),(3,0),(4,0),(0,1),(1,1),(0,2)\} \subseteq \mathbb{Z}^2$. The following is an illustration of $S$ (up to translation by $k \in \mathbb{Z}^2$),
\[
\gyoung(;,;;,:;;;;) 
\]
\vspace{0.2cm}

Here are illustrations of other skew shapes.
\[
\gyoung(;,;;;,:;;;,::;;;) \qquad \gyoung(;;,:;)
\]
\vspace{0.2cm}
\end{myeg}

We recall another definition from \cite[Section 2.2.2]{szczesny2018hopf}. By a graded $\mathbb{Z}^n$-module, we mean a $\mathbb{Z}^n$-module $M$ with a decomposition 
\begin{equation}\label{eq: grading}
M=\bigoplus_{k \in \mathbb{Z}^n} M_k
\end{equation}
such that $0 \in M_0$ and $k\cdot M_m \subseteq M_{k+m}$, for $k \in \mathbb{Z}^n$. We say that a $\mathbb{Z}^n$-module $M$ admits a $\mathbb{Z}^n$-grading if it has structure \eqref{eq: grading}.

Szczesny showed that to a skew shape $S$, one can associate an $\FF_1\langle x_1,\dots,x_n\rangle$-module $M_S$. As a set $M_S:=S\sqcup \{0\}$, and $\FF_1\langle x_1,\dots,x_n\rangle$-action is given as follows: for $s \in M_S$ and $\mathbf{k} \in \mathbb{Z}_{\geq 0}^n$, 
\begin{equation}
x^\mathbf{k}\cdot s=\begin{cases}
	\mathbf{k}+s \textrm{ if $\mathbf{k}+s \in S$ }\\
	0 \textrm{ otherwise.}
\end{cases}
\end{equation}
Szczesny proved that the associated module $M_S$ is finite, $\mathbb{Z}^n$-graded, indecomposable\footnote{This is precisely when $S$ is connected.}, type-$\alpha$ $\FF_1\langle x_1,\dots,x_n\rangle$-module. In fact, Szczesny proved that the converse also holds as follows:

\begin{mythm}\cite[Theorem 5.4.2]{szczesny2018hopf}
Let $M$ be a finite, $\mathbb{Z}^n$-graded, indecomposable, type-$\alpha$ $\FF_1\langle x_1,\dots,x_n\rangle$-module such that $\emph{Supp}(M)=0$. Then, $M \simeq M_S$ for some connected skew shape $S$ in $\mathbb{Z}^n$. 
\end{mythm}

In what follows, we rephrase Szczesny' result in terms of $\FF_1$-representations of $\wild_n$. We start with the following definition.

\begin{mydef}
Let $\mathbb{V}=([m],f_1,f_2,\dots,f_n)$ be a nilpotent $\FF_1$-representation of $\wild_n$. We say that $\mathbb{V}$ \emph{admits a $\mathbb{Z}^n$-grading} if the following conditions hold:
\begin{enumerate}
	\item 
The corresponding module $\mathbf{}F(\mathbb{V})$ in Corollary \ref{corollary: corollary for module correspondence} admits a $\mathbb{Z}^n$-grading as above.
\item 
If $\{ e_i \mid i\le n\}$ denotes the standard basis of $\ZZ^n$, then there exists a $\sigma \in S_n$ such that $f_i$ has degree $e_{\sigma(i)}$ for all $i$. 
\end{enumerate}
\end{mydef}

\begin{pro}\label{proposition: correspondecne skew shape colored quivers}
There exists a one-to-one correspondence between  equivalence classes of connected skew shapes in $\mathbb{Z}^n$ and isomorphism classes of indecomposable, nilpotent $\FF_1$-representations $\mathbb{V}=([m],f_1,f_2,\dots,f_n)$ of $\wild_n$ such that $f_if_j=f_jf_i$ for $i,j=1,\dots,n$ and admitting a $\mathbb{Z}^n$-grading.
\end{pro}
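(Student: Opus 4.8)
The plan is to assemble the bijection directly from Lemmas \ref{lemma: rep to shape} and \ref{lemma: shape to rep}, using the colored quiver $\Gamma_{\mathbb{V}}$ as the common bookkeeping device that mediates between the two sides. First I would define the forward map $\Phi$ sending the isomorphism class of an indecomposable, commuting, $\mathbb{Z}^n$-graded representation $\mathbb{V}=([m],f_1,\dots,f_n)$ of $\wild_n$ to the equivalence class of the connected skew shape $S_{\mathbb{V}}$ produced by Lemma \ref{lemma: rep to shape}(1); part (2) of that lemma guarantees $\Phi$ is well defined on isomorphism classes. In the reverse direction, I would define $\Psi$ by sending the equivalence class of a connected skew shape $S$ to the isomorphism class of the representation $\mathbb{V}_S$ constructed in Lemma \ref{lemma: shape to rep}.

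The first thing to check is that $\Psi$ is well defined, i.e.~that equivalent skew shapes yield isomorphic representations. Since two skew shapes are equivalent exactly when one is a translate $S' = S + \mathbf{x}$ of the other, the assignment $a \mapsto a + \mathbf{x}$ is a poset isomorphism commuting with each of the coordinate shifts $+e_i$. Passing through the construction of Lemma \ref{lemma: shape to rep}, this translation induces an $\FF_1$-linear bijection intertwining the maps $f_i$, hence an isomorphism $\mathbb{V}_S \simeq \mathbb{V}_{S'}$. Equivalently, a translation induces a chromatic quiver isomorphism $\Gamma_S \cong \Gamma_{S'}$, and one invokes Proposition \ref{proposition: colored quiver} to conclude the representations are isomorphic.

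The crux is then to show $\Phi$ and $\Psi$ are mutually inverse, and here the identity $\Gamma_{S_{\mathbb{V}}} = \Gamma_{\mathbb{V}}$ (observed just before the statement) does the heavy lifting. For $\Psi \circ \Phi = \mathrm{id}$, starting from $\mathbb{V}$ I would pass to $S_{\mathbb{V}}$ and then to $\mathbb{V}_{S_{\mathbb{V}}}$; since $\Gamma_{S_{\mathbb{V}}} = \Gamma_{\mathbb{V}}$ as colored quivers, the uniqueness clause of Proposition \ref{proposition: colored quiver} forces $\mathbb{V}_{S_{\mathbb{V}}} \simeq \mathbb{V}$. For $\Phi \circ \Psi = \mathrm{id}$, starting from a connected skew shape $S$ I would build $\mathbb{V}_S$ and recover the shape $S_{\mathbb{V}_S}$; by Lemma \ref{lemma: rep to shape} this shape is the one attached to the $A$-module $\mathbf{F}(\mathbb{V}_S)$ via \cite[Theorem 5.4.2]{szczesny2018hopf}, which returns $S$ up to translation. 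Thus this direction reduces to citing Szczesny's classification.

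The main obstacle I anticipate is bookkeeping the $\mathbb{Z}^n$-grading consistently across both constructions: one must ensure that the grading chosen when producing a representation from a shape matches, up to the $S_n$-symmetry $f_i \mapsto f_{\sigma(i)}$ built into the definition of ``admits a $\mathbb{Z}^n$-grading'', the grading recovered when going back. Tracking this $\sigma$, and confirming that translation is the only ambiguity on the skew-shape side while isomorphism is the only ambiguity on the representation side, is where care is needed; everything else follows formally from the two lemmas and Proposition \ref{proposition: colored quiver}.
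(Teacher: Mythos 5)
Your proposal is correct and follows exactly the route the paper intends: the paper's own ``proof'' is the single line ``By combining the above one has the following,'' i.e.\ it composes Lemma \ref{lemma: rep to shape} with Lemma \ref{lemma: shape to rep} and relies on the identification $\Gamma_{S_{\mathbb{V}}}=\Gamma_{\mathbb{V}}$ together with Proposition \ref{proposition: colored quiver}, just as you do. Your write-up simply makes explicit the well-definedness and mutual-inverse checks that the paper leaves implicit.
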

\begin{proof}
Let $S\subseteq \mathbb{Z}^n$ be a skew shape. We claim that $S$ determines a nilpotent $\FF_1$-representation $\mathbb{V}=([m],f_1,f_2,\dots,f_n)$ of $\wild_n$ such that $f_if_j=f_jf_i$ for $i,j=1,\dots,n$ and admits a $\mathbb{Z}^n$-grading. Indeed, the same idea as in \cite{szczesny2018hopf} works here as follows: let $[m]=S \cup \{0\}$, and $\{e_1,\dots,e_n\}$ be the standard basis vectors in $\mathbb{Z}^n$. Define for each $a \in [m]-\{0\}$, 
\[
f_i^n(a)=\begin{cases}
	e_i+a \textrm{ if $e_i +a \in S$ }\\
	0 \textrm{ otherwise.}
\end{cases}
\]
Then, one can easily see that $\mathbb{V}=([m],f_1,\dots,f_n)$ is a desired nilpotent $\FF_1$-representation and any two equivalent skew shapes determines the same isomorphism class of $\FF_1$-representations. Furthermore, if a skew shape is connected then $\mathbb{V}$ is indecomposable. 

Conversely, let $\mathbb{V}=([m],f_1,f_2,\dots,f_n)$ be an indecomposable, nilpotent $\FF_1$-representation of $\wild_n$ such that $f_if_j=f_jf_i$ $\forall~i,j \in \{1,\dots,n\}$ admitting a $\mathbb{Z}^n$-grading. Let $A=\mathbb{F}_1\langle x_1,\dots,x_n \rangle$. From the proof of Corollary \ref{corollary: corollary for module correspondence}, we know that $[m]$ is a finite $A$-module which is type-$\alpha$ and admits a $\mathbb{Z}^n$-grading. To be precise, to each $a\neq 0 \in [m]$, $x_i$ acts as:
\[
x_i\cdot a :=f_i(a). 
\]
The assumption that $f_if_j=f_jf_i$ ensures that $A$ acts on $[m]$, and \eqref{eq: type alpha} is equivalent to the type-$\alpha$ condition. Let $M$ be this $A$-module structure on $[m]$. Since $\mathbb{V}$ is nilpotent, we have that
\[
\textrm{Ann}_A(M)=\langle x_1^{i_i},\dots,x_n^{i_n}\rangle, \quad \textrm{for some }i_1,\dots,i_n \in \mathbb{Z}_{>0}.
\]
In particular, $\textrm{Supp}(M)=0$. Finally, one can easily observe that $M$ is an indecomposable $A$-module since $\mathbb{V}$ is indecomposable. Now, it follows from \cite[Theorem 5.4.2]{szczesny2018hopf} that $M\simeq M_S$ for some connected skew shape $S \subseteq \mathbb{Z}^n$. If $\mathbb{V}\simeq \mathbb{W}$, then the corresponding skew shapes are equivalent since in this case they define an isomorphic type-$\alpha$ module. 

Finally, one may check that the above constructions are inverses to each other. 
\end{proof}

\begin{rmk}
Let $S$ be a skew shape in $\mathbb{Z}^n$. Then, one can construct a ``coefficient quiver'' $\Gamma_S$ associated to $S$ as follows: let $S$ be the set of vertices of $\Gamma_S$ which are each colored with the unique vertex of $\wild_n$. We draw an $\alpha_i$-colored edge from $v_1$ to $v_2$ if $v_1+e_i=v_2$ viewed as elements in $S$. Now, let $\mathbb{V}=([m],f_1,f_2,\dots,f_n)$ be an indecomposable, nilpotent $\FF_1$-representation of $\wild_n$ such that $f_if_j=f_jf_i$ for $i,j=1,\dots,n$, and $S_\mathbb{V}$ be the associated skew shape. One can easily observe that $\Gamma_{S_\mathbb{V}}=\Gamma_\mathbb{V}$. 
In particular, one may prove one direction of Proposition \ref{proposition: correspondecne skew shape colored quivers} as follows: given a skew shape $S$, we construct a coefficient quiver $c:\Gamma_S \to \wild_n$. Then, from Proposition \ref{proposition: colored quiver}, $\Gamma_S$ uniquely determines a representation $\mathbb{V}$, which is isomorphic to the one we obtained in Proposition \ref{proposition: correspondecne skew shape colored quivers}
\end{rmk}
%\begin{lem}\label{lemma: shape to rep}
%	A skew shape $S \subseteq \mathbb{Z}^n$ determines a representation $\mathbb{V}=([m],f_1,f_2,\dots,f_n) \in \emph{Rep}(\wild_n,\mathbb{F}_1)_{\textrm{nil}}$ such that $f_if_j=f_jf_i$ for $i,j=1,\dots,n$. Furthermore $\mathbb{V}$ admits a $\mathbb{Z}^n$-grading.
%\end{lem}
%\begin{proof}
%	The same idea as in \cite{szczesny2018hopf} works here as follows: Let $[n]=S \cup \{0\}$, and $\{e_1,\dots,e_n\}$ be the standard basis vectors in $\mathbb{Z}^n$. Define for each $a \in [n]-\{0\}$, 
%	\[
%	f_i^n(a)=\begin{cases}
%	e_i+a \textrm{ if $e_i +a \in S$ }\\
%	0 \textrm{ otherwise.}
%	\end{cases}
%	\]
%	Then, one can easily see that $\mathbb{V}=([n],f_1,\dots,f_n)$ is a desired nilpotent representation. %satisfying the conditions in Lemma \ref{lemma: rep to shape}. 
%\end{proof}

%By combining the above one has the following:

%\begin{pro}\label{proposition: correspondecne skew shape colored quivers}
%	There exists a one-to-one correspondence between  equivalence classes of connected skew shapes in $\mathbb{Z}^n$ and indecomposable representations $\mathbb{V}=([m],f_1,f_2,\dots,f_n) \in \emph{Rep}(\wild_n,\mathbb{F}_1)_{\textrm{nil}}$ such that $f_if_j=f_jf_i$ for $i,j=1,\dots,n$ and admitting a $\mathbb{Z}^n$-grading.
%\end{pro}

When it comes to Hall algebras, Proposition \ref{proposition: correspondecne skew shape colored quivers} implies the following.   

%\begin{cor}\label{corollary: skew shapes and colored quivers hall algebras}
%Let $\mathcal{C}_n$ be the full subcategory of $\textrm{Rep}(\wild_n,\FF_1)_{\nil}$ consisting of objects $\mathbb{V}=(V,f_i)$ such that $f_if_j=f_jf_i$ for $i,j=1,\dots,n$. Then, the Hall algebra $H_{\mathcal{C}_n}$ of $\mathcal{C}_n$ is well-defined and is isomorphic to the Hall algebra $SK_n$ of the category $\mathcal{D}=A$-$\textrm{mod}_0^{\alpha,gr}$ in \cite{szczesny2018hopf}
%\end{cor}

\begin{cor}\label{corollary: skew shapes and colored quivers hall algebras}
Let $\mathcal{C}_n$ be the full subcategory of $\emph{Rep}(\wild_n,\FF_1)_{\nil}$ consisting of objects $\mathbb{V}=(V,f_i)$ such that $f_if_j=f_jf_i$ for $i,j=1,\dots,n$ and admitting a $\mathbb{Z}^n$-grading. Let $X_n$ denote the linear subspace of $H_{\wild_n,\nil}$ generated by the objects of $\mathcal{C}_n$. Then $X_n$ is a subcoalgebra of $H_{\wild_n,\nil}$ isomorphic to the underlying coalgebra of $SK_n$, the Hall algebra of the category $\mathcal{D}=A$-$\textrm{mod}_0^{\alpha,gr}$ in \cite{szczesny2018hopf}
\end{cor}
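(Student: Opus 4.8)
The plan is to prove the two assertions in turn: that $X_n$ is a subcoalgebra, and that it is isomorphic as a coalgebra to $SK_n$. Recall from \eqref{eq: hall coprod} that the coproduct on a basis element of $H_{\wild_n,\nil}$ reads
\[
\Delta(\delta_{[R]}) = \sum_{[M],[N]\,:\,M\oplus N \cong R} \delta_{[M]}\otimes \delta_{[N]},
\]
so that comultiplication only records direct sum decompositions. Hence the subcoalgebra claim reduces to showing that $\mathcal{C}_n$ is closed under direct summands. To see this, suppose $\mathbb{V}\in\mathcal{C}_n$ and $\mathbb{V}\cong\mathbb{V}_1\oplus\mathbb{V}_2$. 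As explained in the proof of Lemma \ref{l.basicprop}(6), such a decomposition induces a splitting $V = V'\oplus V''$ of the underlying $\FF_1$-vector space compatible with each $f_i$, so that each $f_i$ restricts to endomorphisms of the two summands. The commutativity relations $f_if_j=f_jf_i$ are thus inherited by $\mathbb{V}_1$ and $\mathbb{V}_2$, and any $\mathbb{Z}^n$-grading on $\mathbb{V}$ restricts to $\mathbb{Z}^n$-gradings on the summands with the restricted maps still of degree $e_{\sigma(i)}$. Therefore $\mathbb{V}_1,\mathbb{V}_2\in\mathcal{C}_n$, so $\Delta(X_n)\subseteq X_n\otimes X_n$; together with the compatibility of the counit (which is supported on the zero object), this shows $X_n$ is a subcoalgebra.

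For the isomorphism I would use the equivalence of categories established above. By Corollary \ref{corollary: corollary for module correspondence}, the functor $\mathbf{F}$ restricts to an equivalence between the commuting representations of $\wild_n$ and the type-$\alpha$ modules over $A = \FF_1\langle x_1,\dots,x_n\rangle$. Imposing the $\mathbb{Z}^n$-grading and nilpotency (equivalently, support at $0$, as computed in the proof of Lemma \ref{lemma: rep to shape}) on both sides restricts this further to an equivalence $\mathbf{F} : \mathcal{C}_n \to \mathcal{D}$, where $\mathcal{D}=A\textrm{-mod}_0^{\alpha,gr}$. Since $\mathbf{F}$ sends a representation to its underlying graded module and carries direct sums to direct sums, it induces a bijection between isomorphism classes of objects of $\mathcal{C}_n$ and of $\mathcal{D}$; this is precisely the content of Proposition \ref{proposition: correspondecne skew shape colored quivers} at the level of indecomposables, extended to all objects via Krull--Schmidt. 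I would then define the linear map $\Theta:X_n\to SK_n$ by $\Theta(\delta_{[\mathbb{V}]}) = \delta_{[\mathbf{F}(\mathbb{V})]}$, which is a vector space isomorphism because it is a bijection on the distinguished bases.

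It then remains to check that $\Theta$ intertwines the coproducts. Because the comultiplication on each side is computed purely from direct sum decompositions, and $\mathbf{F}$ both preserves and reflects direct sums, the structure constants agree: $\mathbb{V}\cong\mathbb{V}_1\oplus\mathbb{V}_2$ in $\mathcal{C}_n$ if and only if $\mathbf{F}(\mathbb{V})\cong\mathbf{F}(\mathbb{V}_1)\oplus\mathbf{F}(\mathbb{V}_2)$ in $\mathcal{D}$. Consequently $(\Theta\otimes\Theta)\circ\Delta = \Delta_{SK_n}\circ\Theta$, and $\Theta$ is likewise compatible with the counits (both detect the zero/empty object). This yields the desired coalgebra isomorphism $X_n\cong SK_n$.

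I expect the main obstacle to be bookkeeping rather than conceptual, concentrated in the second step: one must verify carefully that the $\mathbb{Z}^n$-grading behaves well under $\mathbf{F}$ and under passage to direct summands, and in particular that the permutation $\sigma\in S_n$ recording the degrees of the $f_i$ is harmless under the identification of $\mathcal{C}_n$ with $\mathcal{D}$ (it amounts to the relabeling of generators already seen in the $S_n$-action on $H_{\wild_n,\nil}$). It is worth emphasizing that we obtain only a coalgebra isomorphism and not a Hopf algebra isomorphism, since the Hall product on $X_n$ (gluing of colored quivers) need not match the stacking product on $SK_n$; the coproduct, being insensitive to the multiplicative structure, is all that transports cleanly.
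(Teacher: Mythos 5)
Your proposal is correct and follows essentially the same route as the paper, which simply cites Corollary \ref{corollary: corollary for module correspondence} and Proposition \ref{proposition: correspondecne skew shape colored quivers}; your write-up merely makes explicit the closure of $\mathcal{C}_n$ under direct summands and the transport of the coproduct along $\mathbf{F}$. Your closing caveat that only the coalgebra structure (not the Hall product) transports matches the remark the authors make immediately after the corollary.
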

\begin{proof}
This directly follows from Corollary \ref{corollary: corollary for module correspondence} and Proposition \ref{proposition: correspondecne skew shape colored quivers}. 
\end{proof}

\begin{rmk}\label{remark: quiver remark}
One may directly define the Hall algebra $H_n$ of $\mathcal{C}_n$ by using the same recipe as the Hall algebra $H_Q$ of $\Rep(Q,\FF_1)_{\nil}$. In this case, Corollary \ref{corollary: corollary for module correspondence} yields an isomorphism $H_n\simeq SK_n$ (as Hopf algebras). However, $H_n$ is not a Hopf subalgebra of $H_{\wild_n,\nil}$ for $n>1$. This is because, for $n>1$, $\mathcal{C}_n$ is not closed under extensions. This means that the coalgebra isomorphism in Corollary \ref{corollary: skew shapes and colored quivers hall algebras}
\[ 
X_n \cong SK_n 
\]
is not an algebra isomorphism. For instance, consider the short exact sequence in $\Rep(\wild_n,\FF_1)_{\nil}$ which appears below. Note that representations are written in terms of their coefficient quivers, with $f_1$ acting via blue (dotted) arrows, $f_2$ acting via red arrows, and the other maps acting as zero:
 \begin{equation}\label{eq: colored quiver}
0 \rightarrow
\begin{tikzcd}[arrow style=tikz,>=stealth,row sep=2em]
3 \arrow[d,blue,dotted,very thick]  \\  
4 
\end{tikzcd} \rightarrow 
\begin{tikzcd}[arrow style=tikz,>=stealth,row sep=2em]
1 \arrow[d,blue, dotted, very thick] \\  
2 \arrow[d,red,very thick] \\ 
3 \arrow[d,blue,dotted,very thick] \\ 
4
\end{tikzcd} \rightarrow 
\begin{tikzcd}[arrow style=tikz,>=stealth,row sep=2em]
1 \arrow[d,blue,dotted,very thick]  \\  
2 
\end{tikzcd} \rightarrow  
0.
\end{equation}
The two-dimensional representations on the endpoints are in $\mathcal{C}_n$, but the middle term is not. 
\end{rmk}

The case $n=1$ in Corollary \ref{corollary: skew shapes and colored quivers hall algebras} is better behaved. In this case, any representation $\mathbb{V}=(V,f) \in  \textrm{Rep}(\wild_1,\FF_1)_{\nil}$ satisfies the commutativity condition. %Furthermore, $(2)$ of Lemma \ref{l.basicprop} ensure that $\mathbb{V}$ satisfies the condition \eqref{eq: type alpha}. 
In particular, we have an isomorphism of Hopf algebras:
\[
H_{\mathcal{C}_1} \simeq H_\textrm{$\wild_1$,nil}.
\]

Recall that the ring $\Lambda$ of symmetric functions has a natural Hopf algebra structure; for instance, see \cite{borger2005plethystic}. The Hall algebra $SK_n$ is a generalization of $\Lambda$ in the sense that $SK_1 \simeq \Lambda$. Now, with Corollary \ref{corollary: skew shapes and colored quivers hall algebras}, we reprove the following: 

\begin{cor}(\cite[Theorem 9]{szczesny2011representations})
With the same notation as above, $H_\textrm{$\wild_1$,nil}$ is isomorphic to the ring $\Lambda$ of symmetric functions (viewed as a Hopf algebra). 
\end{cor}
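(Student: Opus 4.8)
The plan is to obtain this statement by specializing the machinery of this section to $n=1$ and chaining together the isomorphisms already established; the point is that every pathology recorded in the preceding remark — the one which prevented $X_n \cong SK_n$ from being an \emph{algebra} map when $n>1$ — disappears when $n=1$.

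First I would verify that $\mathcal{C}_1 = \Rep(\wild_1,\FF_1)_{\nil}$. The commutativity condition $f_if_j = f_jf_i$ is vacuous for a single loop, so it imposes no restriction whatsoever. It then remains to check that every nilpotent $\mathbb{V} = (V,f)$ admits a $\mathbb{Z}$-grading. By Lemma \ref{l.basicprop}, the colored quiver $\Gamma_{\mathbb{V}}$ is acyclic and, having only one arrow color, is a disjoint union of oriented paths; grading each vertex by its position along its path realizes $f$ as an operator of degree $e_1$, which is exactly the required $\mathbb{Z}^1$-grading. Hence $\mathcal{C}_1$ is the entire nilpotent category, and the subspace $X_1 \subseteq H_{\wild_1,\nil}$ of Corollary \ref{corollary: skew shapes and colored quivers hall algebras} is all of $H_{\wild_1,\nil}$.

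Next I would upgrade the coalgebra isomorphism $X_1 \cong SK_1$ of Corollary \ref{corollary: skew shapes and colored quivers hall algebras} to a genuine Hopf algebra isomorphism. Since $\mathcal{C}_1$ is the whole category, it is trivially closed under extensions, so $X_1$ is a subalgebra of $H_{\wild_1,\nil}$ — in fact the whole algebra. Under the equivalence of Corollary \ref{corollary: corollary for module correspondence} between $\mathcal{C}_1$ and the type-$\alpha$ graded $\FF_1\langle x_1\rangle$-modules, short exact sequences correspond to short exact sequences, so the Hall structure constants $\mathbf{a}^R_{M,N}$ are preserved. Consequently the object bijection of Proposition \ref{proposition: correspondecne skew shape colored quivers} intertwines the two Hall products, and the coalgebra map becomes an isomorphism of Hopf algebras $H_{\wild_1,\nil} \cong SK_1$. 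Finally I would invoke Szczesny's identification $SK_1 \cong \Lambda$ from \cite{szczesny2018hopf} to conclude $H_{\wild_1,\nil} \cong \Lambda$. The one genuinely delicate step is precisely this upgrade from a coalgebra to a Hopf algebra isomorphism: for $n>1$ it fails exactly because $\mathcal{C}_n$ is not closed under extensions, so the heart of the argument is recognizing that this single obstruction vanishes when $n=1$.
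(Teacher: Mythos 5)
Your proposal is correct and follows essentially the same route as the paper: identify $\Lambda$ with $SK_1$, observe that for $n=1$ the subcategory $\mathcal{C}_1$ is all of $\Rep(\wild_1,\FF_1)_{\nil}$, and use the skew-shape correspondence to promote the comparison to a Hopf algebra isomorphism. You are somewhat more explicit than the paper on two points it leaves implicit — that every nilpotent $\wild_1$-representation admits a $\mathbb{Z}$-grading (via the path structure of $\Gamma_{\mathbb{V}}$) and that closure under extensions is what makes the coalgebra map multiplicative — which is a welcome amount of extra care but not a different argument.
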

\begin{proof}
	We may see $\Lambda$ as the Hall algebra $SK_1$. In this case, the category $\mathcal{C}:=A$-$\textrm{mod}_0^{\alpha,gr}$, where $A=\mathbb{F}_1\langle x \rangle$ is a full subcategory of $\textrm{Rep}(\wild_1,\FF_1)_{\nil}$ which is closed under taking subobjects, quotient objects, and extensions. In particular, one has a natural map $f:\Lambda \to H_{\wild_1}$ of Hopf algebras which can be easily seen to be injective from Proposition \ref{proposition: correspondecne skew shape colored quivers}.
\end{proof}

In \cite{szczesny2014hall}, Szczesny proves that for the category $\mathcal{C}^N_{nil}$ of nilpotent $\FF_1\langle t\rangle$-modules, the Hall algebra $H_{\mathcal{C}^N_{nil}}$ is isomorphic to the dual of the Kreimer's Hopf algebra of rooted forests \cite{kreimer1998hopf}. One can easily see that the categorical equivalence in Corollary \ref{corollary: corollary for module correspondence} restricts to the equivalence  $\textrm{Rep}(\wild_n,\FF_1)_{\textrm{nil}}$ and the category of finite nilpotent left $M_{\wild_n}$-modules. We thus have the following. 

\begin{cor}
With the same notation as above, $H_\textrm{$\wild_1$,nil}$ is isomorphic to the dual $H^*_K$ as Hopf algebras, where $H_K$ is Kreimer's Hopf algebra of rooted forests. 
\end{cor}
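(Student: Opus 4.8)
The plan is to deduce the statement directly from the categorical correspondence of Corollary \ref{corollary: corollary for module correspondence} together with Szczesny's computation in \cite{szczesny2014hall}. First I would specialize to $n=1$: since $M_{\wild_1}$ is the free monoid on a single generator, it is nothing but $\FF_1\langle t\rangle$, and the fully faithful functor $\mathbf{F}$ of Proposition \ref{proposition: fully faithful functor} restricts to an equivalence between $\textrm{Rep}(\wild_1,\FF_1)_{\nil}$ and the category $\mathcal{C}^N_{nil}$ of finite nilpotent left $\FF_1\langle t\rangle$-modules. Concretely, a nilpotent representation $([m],f)$ is sent to the pointed set $[m]$ on which $t$ acts by $f$, and nilpotency of $f$ is exactly nilpotency of the $t$-action; conversely any such module arises this way, which is the $n=1$ instance of the essential surjectivity recorded just before the statement.

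Second, I would verify that this equivalence is compatible with the proto-exact structures on both sides, so that it lifts to an isomorphism of Hall algebras. The key observation is that a subrepresentation $\mathbb{W}\subseteq\mathbb{V}$ is precisely a pointed subset of $V$ stable under $f$, which is exactly a left $\FF_1\langle t\rangle$-submodule of $\mathbf{F}(\mathbb{V})$; the induced quotients agree as well by Definition \ref{definition: $F_1$-vectorspace}. Hence a sequence $\mathbb{O}\to N\to R\to M\to\mathbb{O}$ is short exact in $\textrm{Rep}(\wild_1,\FF_1)_{\nil}$ if and only if its image is short exact in $\mathcal{C}^N_{nil}$, so that the structure constants $\mathbf{P}^R_{M,N}$ are preserved; the automorphism counts $a_M$ and $a_N$ are preserved too, since $\mathbf{F}$ is fully faithful and thus induces isomorphisms $\operatorname{Aut}(M)\cong\operatorname{Aut}(\mathbf{F}(M))$. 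Because $\mathbf{F}$ carries direct sums to direct sums, it intertwines both the Hall product \eqref{eq: hall product} and the coproduct \eqref{eq: hall coprod}, yielding an isomorphism $H_\textrm{$\wild_1$,nil}\cong H_{\mathcal{C}^N_{nil}}$ of Hopf algebras.

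Finally, I would invoke \cite{szczesny2014hall}, where Szczesny identifies $H_{\mathcal{C}^N_{nil}}$ with the dual $H^*_K$ of Kreimer's Hopf algebra of rooted forests; composing the two isomorphisms gives $H_\textrm{$\wild_1$,nil}\cong H^*_K$. I expect the only real work to be the second step: one must argue that the correspondence is genuinely an equivalence of proto-exact categories, preserving admissible monomorphisms and epimorphisms rather than merely abstract isomorphism classes, since it is this refined compatibility, and not the bare categorical equivalence of Corollary \ref{corollary: corollary for module correspondence}, that forces the Hall multiplication and comultiplication to agree on the nose.
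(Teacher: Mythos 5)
Your proposal follows essentially the same route as the paper: the paper likewise deduces the corollary by observing that the equivalence of Corollary \ref{corollary: corollary for module correspondence} restricts to one between $\textrm{Rep}(\wild_1,\FF_1)_{\nil}$ and the category of finite nilpotent left $\FF_1\langle t\rangle$-modules (of type-$\alpha$, which for $n=1$ is exactly $\FF_1$-linearity of the $t$-action), and then invokes Szczesny's identification of that category's Hall algebra with $H^*_K$. Your second step, checking that the equivalence preserves subobjects, quotients, short exact sequences, and automorphism groups so that the Hall product and coproduct match, is exactly the content the paper compresses into ``one can easily see,'' so your write-up is a correct and slightly more explicit version of the paper's argument.
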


%one may notice that  the subcategory $\mathcal{C}_1$ is actually equivalent to $\textrm{Rep}(\wild_1,\FF_1)_{\nil}$; this is because any representation of $\wild_1$ over $\FF_1$ satisfies the conditions in Lemma \ref{lemma: rep to shape}. In particular, $H_\mathcal{C}$ is isomorphic to $H_\textrm{$\wild_n$,nil}$. 

%one may notice the equivalences of categories:
%\[
%A\textrm{-}\textrm{mod}_0^{\alpha,gr} \simeq \mathcal{C} \simeq \textrm{Rep}(\wild_1,\FF_1)_{\nil},
%\]
%where $A=\mathbb{F}_1\langle x \rangle$.and \cite{szczesny2018hopf}

%\begin{myeg}
%	In \cite{szczesny2011representations}, Szczesny proved that the Hall algebra of $\Rep(\wild_1,\FF_1)_{\nil}$ is isomorphic to the ring $\Lambda$ of symmetric functions as a Hopf algebra. We note that in fact, the Hall algebra of $\Rep(\wild_1,\FF_q)_{\nil}$ is isomorphic to $\Lambda$ (also as a Hopf algebra). See, \cite{schiffmann2006lectures}.
%\end{myeg}

\bibliography{quiver}\bibliographystyle{alpha}

\end{document}